\newtheorem{theo}{Theorem}[section]
\newtheorem{coll}[theo]{Corollary}
\newtheorem{lemm}[theo]{Lemma}
\newtheorem{prop}[theo]{Proposition}
\newtheorem{defn}[theo]{Definition}
\newtheorem{ex}[theo]{Example}
\newtheorem{rem}[theo]{Remark}
\newcommand{\Hom}{{\rm Hom}}
\begin{document}
\sloppy

\title[Strongly regular and strongly Baer objects]{Strongly Rickart objects in abelian categories. \\
Applications to strongly regular \\ and strongly Baer objects}

\author[S.Crivei]{Septimiu Crivei}

\address{Faculty of Mathematics and Computer Science, Babe\c s-Bolyai University, Str. M. Kog\u alniceanu 1,
400084 Cluj-Napoca, Romania} \email{crivei@math.ubbcluj.ro}

\author[G. Olteanu]{Gabriela Olteanu}

\address{Department of Statistics-Forecasts-Mathematics, Babe\c s-Bolyai University, Str. T. Mihali 58-60, 400591
Cluj-Napoca, Romania} \email{gabriela.olteanu@econ.ubbcluj.ro}

\subjclass[2000]{18E10, 18E15, 16D90, 16E50, 16T15, 16W50} \keywords{Abelian category, (dual) strongly Rickart object, 
strongly regular object, (dual) strongly Baer object, (graded) module, comodule.}

\begin{abstract} We show how the theory of (dual) strongly relative Rickart objects may be employed in order to 
study strongly relative regular objects and (dual) strongly relative Baer objects in abelian categories. 
For each of them, we prove general properties, we analyze the behaviour with respect to (co)products, 
and we study the transfer via functors. We also give applications to Grothendieck categories, 
(graded) module categories and comodule categories.
\end{abstract}

\date{February 5, 2018}

\maketitle

\section{Introduction}

The theory of (dual) strongly relative Rickart objects developed in the companion paper \cite{CO1} is systematically 
used in the present paper in order to study strongly relative regular objects and 
(dual) strongly relative Baer objects in abelian categories. We also give applications to Grothendieck categories, 
(graded) module categories and comodule categories.
For an introduction and motivation of the topic as well as for all needed concepts and notation the reader is referred to \cite{CO1}. 
Usually the statements of our results have two parts, out of which we only prove the first one, the second one following by
the duality principle in abelian categories.

In Section 2 we define strongly relative regular objects in abelian categories. 
Let $M$ and $N$ be objects of an abelian category $\mathcal{A}$. Then $N$ is called 
\emph{strongly $M$-regular} if $N$ is strongly $M$-Rickart and dual strongly $M$-Rickart. 
Also, $N$ is called \emph{strongly self-regular} if $N$ is strongly $N$-regular. We show that $M$ 
is strongly self-regular if and only if ${\rm End}_{\mathcal{A}}(M)$ is a strongly regular ring 
if and only if $M$ is self-regular and weak duo. Also, we prove that $N$ is strongly $M$-regular if and only if 
$N$ is strongly $M$-Rickart and $M$ is direct $N$-injective if and only if $N$ is dual strongly $M$-Rickart and 
direct $M$-projective.

In Section 3 we study (co)products of strongly relative regular objects. We prove that if 
$M$, $N_1,\dots,N_n$ are objects of an abelian category $\mathcal{A}$, 
then $\bigoplus_{i=1}^n N_i$ is strongly $M$-regular
if and only if $N_i$ is strongly $M$-regular for every $i\in \{1,\dots,n\}$.
We show that if $M=\bigoplus_{i\in I}M_i$ is a direct sum decomposition of 
an object $M$ of an abelian category $\mathcal{A}$, then $M$ is strongly self-regular if and only if 
$M_i$ is strongly self-regular for every $i\in I$, and ${\rm Hom}_{\mathcal{A}}(M_i,M_j)=0$ for every $i,j\in I$ with $i\neq j$.
We derive a corollary on the structure of strongly self-regular modules over a Dedekind domain. 

In Section 4 we deal with the transfer of the strong relative regular property via functors. 
We show various results involving fully faithful functors, adjoint pairs and adjoint triples of functors. 
Let $(L,R)$ be an adjoint pair of covariant functors $L:\mathcal{A}\to \mathcal{B}$ and
$R:\mathcal{B}\to \mathcal{A}$ between abelian categories such that $L$ is exact, 
and let $M$ and $N$ be objects of $\mathcal{B}$ such that $M,N\in {\rm Stat}(R)$. 
Then we prove that the following are equivalent: $(i)$ $N$ is strongly $M$-regular in $\mathcal{B}$; 
$(ii)$ $R(N)$ is strongly $R(M)$-regular in $\mathcal{A}$ and for every morphism $f:M\to N$, ${\rm Ker}(f)$ is $M$-cyclic; 
$(iii)$ $R(N)$ is strongly $R(M)$-regular in $\mathcal{A}$ and for every morphism $f:M\to N$, ${\rm Ker}(f)\in {\rm Stat}(R)$.

In Section 5 we define (dual) strongly relative Baer objects in abelian categories. 
Let $M$ and $N$ be objects of an abelian category $\mathcal{A}$. Then $N$ is called 
\emph{strongly $M$-Baer} if for every family $(f_i)_{i\in I}$ with each $f_i\in \Hom_{\mathcal{A}}(M,N)$, $\bigcap_{i\in I}
{\rm Ker}(f_i)$ is a fully invariant direct summand of $M$. Also, $N$ is called \emph{strongly self-Baer} if 
$N$ is strongly $N$-Baer. We show that $M$ is strongly self-Baer if and only if $M$ is self-Baer and weak duo. 
Also, if there exists the product $M^I$ for every set $I$, then $M$ is strongly self-Baer 
if and only if $M$ is strongly self-Rickart and has the strong summand intersection property.
If there exists the product $N^I$ for every set $I$, then we prove that 
$N$ is strongly $M$-Baer and $M$-$\mathcal{K}$-cononsingular if and only if 
$M$ is strongly extending and $N$ is $M$-$\mathcal{K}$-nonsingular.

In Section 6 we study (co)products of (dual) strongly relative Baer objects. We prove that if 
$M$, $N_1,\dots,N_n$ are objects of an abelian category $\mathcal{A}$, 
then $\bigoplus_{i=1}^n N_i$ is strongly $M$-Baer
if and only if $N_i$ is strongly $M$-Baer for every $i\in \{1,\dots,n\}$.
We show that if $M=\bigoplus_{i\in I}M_i$ is a direct sum decomposition of 
an object $M$ of an abelian category $\mathcal{A}$, then $M$ is strongly self-Baer if and only if 
$M_i$ is strongly self-Baer for every $i\in I$, ${\rm Hom}_{\mathcal{A}}(M_i,M_j)=0$ for every $i,j\in I$ with $i\neq j$,
and $N=\bigoplus_{i\in I}(N\cap M_i)$ for every direct summand $N$ of $M$.
We derive a corollary on the structure of strongly self-Baer modules over a Dedekind domain. 

In Section 7 we study the transfer of the strong relative Baer property via functors, similarly to Section 4. 
For a right $R$-module $M$ with $S={\rm End}_R(M)$, we show that the following are equivalent:
$(i)$ $M$ is a strongly self-Baer right $R$-module; 
$(ii)$ $S$ is a strongly self-Baer right $S$-module and for every set $I$ and for every family $(f_i)_{i\in I}$ with each $f_i\in
S$, $\bigcap_{i\in I}{\rm Ker}(f_i)$ is $M$-cyclic;
$(iii)$ $S$ is a strongly self-Baer right $S$-module and for every set $I$ and for every family $(f_i)_{i\in I}$ with each $f_i\in
S$, $\bigcap_{i\in I}{\rm Ker}(f_i)\in {\rm Stat}({\rm Hom}_R(M,-))$;
$(iv)$ $S$ is a strongly self-Baer right $S$-module and for every set $I$ and for every family $(f_i)_{i\in I}$ with each $f_i\in
S$, $\bigcap_{i\in I}{\rm Ker}(f_i)$ is a locally split submodule;
$(v)$ $S$ is a strongly self-Baer right $S$-module and $M$ is quasi-retractable.

\section{Strongly relative regular objects}

In this section we begin to systematically apply our theory of (dual) strongly relative Rickart objects 
to the study of some corresponding regular-type objects of an abelian category, called strongly relative regular objects.

Let us first recall the concept of relative regular object in a category.

\begin{defn} \cite[Definition~2.1]{DNTD} \rm Let $M$ and $N$ be objects of an arbitrary category $\mathcal{C}$. Then $N$ is called:
\begin{enumerate}
\item \emph{$M$-regular} if every morphism $f:M\to N$ in $\mathcal{C}$ has a generalized inverse, 
in the sense that there exists a morphism $g:N\to M$ in $\mathcal{C}$ such that $fgf=f$.
\item \emph{self-regular} if $N$ is $N$-regular.
\end{enumerate}
\end{defn}

Relative regular objects of abelian categories are characterized as follows.

\begin{theo} \cite[Proposition~3.1]{DNTD}, \cite[Corollary~2.3]{CK}
Let $M$ and $N$ be objects of an abelian category $\mathcal{A}$. Then 
$N$ is $M$-regular if and only if for every morphism $f:M\to N$, ${\rm Ker}(f)$ is a direct summand of $M$ 
and ${\rm Im}(f)$ is a direct summand of $N$, that is, $N$ is $M$-Rickart and dual $M$-Rickart.
\end{theo}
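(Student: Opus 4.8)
The plan is to prove both implications by exploiting the elementary fact that a relation $fgf=f$ forces $gf$ and $fg$ to be idempotent, together with the fact that idempotents split in any abelian category. For the direct implication, suppose $N$ is $M$-regular and fix $f:M\to N$ with generalized inverse $g:N\to M$, so $fgf=f$. First I would observe that $e:=gf\in \End_{\mathcal{A}}(M)$ satisfies $e^2=g(fgf)=gf=e$, and likewise $e':=fg\in \End_{\mathcal{A}}(N)$ is idempotent. Since $\mathcal{A}$ is abelian, every idempotent splits, giving direct sum decompositions $M=\mathrm{Im}(e)\oplus \mathrm{Ker}(e)$ and $N=\mathrm{Im}(e')\oplus \mathrm{Ker}(e')$. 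It then remains only to identify these summands with the kernel and image of $f$.

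The crux of this direction is the pair of identities $\mathrm{Ker}(gf)=\mathrm{Ker}(f)$ and $\mathrm{Im}(fg)=\mathrm{Im}(f)$. The inclusions $\mathrm{Ker}(f)\subseteq \mathrm{Ker}(gf)$ and $\mathrm{Im}(fg)\subseteq \mathrm{Im}(f)$ are automatic. For the reverse inclusions I would argue at the level of morphisms using $fgf=f$: writing $\iota:\mathrm{Ker}(gf)\to M$ for the kernel inclusion, $gf\iota=0$ yields $f\iota=(fgf)\iota=f(gf\iota)=0$, so $\iota$ factors through $\mathrm{Ker}(f)$; dually, the equality $f=(fg)f$ exhibits $\mathrm{Im}(f)$ as a subobject of $\mathrm{Im}(fg)$. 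Consequently $\mathrm{Ker}(f)=\mathrm{Ker}(e)$ is a direct summand of $M$ and $\mathrm{Im}(f)=\mathrm{Im}(e')$ is a direct summand of $N$, which is precisely the assertion that $N$ is $M$-Rickart and dual $M$-Rickart.

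For the converse, assume that for every $f:M\to N$ both $\mathrm{Ker}(f)$ and $\mathrm{Im}(f)$ are direct summands, and construct a generalized inverse explicitly. Choosing decompositions $M=\mathrm{Ker}(f)\oplus M'$ and $N=\mathrm{Im}(f)\oplus N'$, and writing the canonical factorization $f=jp$ with $p:M\to \mathrm{Im}(f)$ epic and $j:\mathrm{Im}(f)\to N$ monic, I would show that $\bar f:=p\iota_{M'}:M'\to \mathrm{Im}(f)$ is an isomorphism: it is monic since $\mathrm{Ker}(\bar f)=\mathrm{Ker}(f)\cap M'=0$, and epic onto $\mathrm{Im}(f)$ since $f$ kills $\mathrm{Ker}(f)$. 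Then I would set $g:=\iota_{M'}\bar f^{-1}\pi_N$, where $\pi_N:N\to \mathrm{Im}(f)$ is the projection associated to $N=\mathrm{Im}(f)\oplus N'$. Using $\pi_N j=1_{\mathrm{Im}(f)}$ and $\bar f=p\iota_{M'}$, a short computation gives $fgf=jp\,\iota_{M'}\bar f^{-1}\pi_N\,jp=j\bar f\bar f^{-1}p=jp=f$, so $g$ is the desired generalized inverse and $N$ is $M$-regular.

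The main obstacle I anticipate is not conceptual but the discipline of replacing element-wise reasoning by morphism-level arguments valid in an arbitrary abelian category: the identifications of kernels and images, the splitting of the idempotents $gf$ and $fg$, and the verification that $\bar f$ is an isomorphism must all be carried out through universal properties and the canonical epi-mono factorization rather than through elements. Once the two idempotents are produced, however, the whole argument reduces to these standard abelian-category facts.
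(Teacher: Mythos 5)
Your proof is correct, and it is essentially the standard argument: the paper itself does not prove this statement but only cites it (to \cite[Proposition~3.1]{DNTD} and \cite[Corollary~2.3]{CK}), and those references proceed exactly as you do, splitting the idempotents $gf$ and $fg$, identifying $\mathrm{Ker}(gf)=\mathrm{Ker}(f)$ and $\mathrm{Im}(fg)=\mathrm{Im}(f)$, and conversely assembling the generalized inverse $g=\iota_{M'}\bar f^{-1}\pi_N$ from the two direct sum decompositions. All your verifications (the inclusion arguments, that $\bar f$ is monic and epic hence an isomorphism in an abelian category, and the computation $fgf=f$) are sound.
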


In a similar way, we introduce the following concept.  

\begin{defn} \label{d:strreg} \rm Let $M$ and $N$ be objects of an abelian category $\mathcal{A}$. Then $N$ is called:
\begin{enumerate}
\item \emph{strongly $M$-regular} if for every morphism $f:M\to N$, ${\rm Ker}(f)$ is a fully invariant direct summand of $M$ and 
${\rm Im}(f)$ is a fully invariant direct summand of $N$, that is, $N$ is strongly $M$-Rickart and dual strongly $M$-Rickart.
\item \emph{strongly self-regular} if $N$ is strongly $N$-regular.
\end{enumerate}
\end{defn}

As relative regularity has its root in von Neumann regularity of rings, soon we shall 
see that strong relative regularity is related to strong regularity of rings. 
Recall that a ring $R$ is called \emph{strongly regular} if 
for every $a\in R$ there exists an element $b\in R$ such that $a=a^2b$ (equivalently, 
for every $a\in R$ there exists an element $b\in R$ such that $a=ba^2$) \cite{AK}.
We also recall the following well known characterization of strongly regular rings.

\begin{prop} \label{p:strreg} A ring $R$ is strongly regular if and only if $R$ is von Neumann regular and abelian.
\end{prop}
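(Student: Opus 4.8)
The plan is to prove both implications by routing through the intermediate observation that a strongly regular ring is \emph{reduced}, i.e.\ has no nonzero nilpotents, together with the fact that in a reduced ring one-sided zero products are symmetric: $xy=0$ if and only if $yx=0$. The latter is immediate, since $xy=0$ forces $(yx)^2=y(xy)x=0$, whence $yx=0$ by reducedness. These two facts carry essentially all the weight of the argument.

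For the forward implication, suppose $R$ is strongly regular, so every $a\in R$ satisfies $a=a^2b$ for some $b$. First I would verify that $R$ is reduced: if $a^2=0$ then $a=a^2b=0$. To obtain von Neumann regularity I would rewrite $a=a^2b$ as $a(1-ab)=0$ and apply the symmetry of zero products in a reduced ring to get $(1-ab)a=0$, that is $a=aba$; hence every element is regular. To obtain that $R$ is abelian I would show that every idempotent $e$ is central. For arbitrary $r\in R$ set $v=er-ere$; then $ev=v$ and $ve=0$, so $v^2=v(ev)=(ve)v=0$, and reducedness forces $v=0$, i.e.\ $er=ere$. Symmetrically, $w=re-ere$ satisfies $we=w$ and $ew=0$, giving $w^2=(we)w=w(ew)=0$ and hence $re=ere$. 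Combining, $er=ere=re$ for all $r$, so $e$ is central and $R$ is abelian.

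For the converse, assume $R$ is von Neumann regular and abelian. Given $a\in R$, choose $b$ with $a=aba$. Then $ab$ is idempotent, since $(ab)(ab)=(aba)b=ab$, and by the abelian hypothesis $ab$ is central. Therefore $a=aba=(ab)a=a(ab)=a^2b$, which is exactly the strong regularity condition; the equivalent form $a=ba^2$ would follow symmetrically by using the central idempotent $ba$ instead.

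I do not anticipate a serious obstacle here, as the result is classical; the only slightly delicate point is the verification that idempotents are central in a reduced ring, which hinges on exhibiting the correct nilpotent elements $v$ and $w$ and applying reducedness. Everything else reduces to the symmetry of one-sided zero products, which I would isolate as the single reusable lemma underpinning both directions.
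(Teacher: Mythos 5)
Your proof is correct, and in fact it supplies more than the paper does: the paper states this proposition as a recalled, well-known fact (citing the classical literature) and gives no proof at all, so there is no argument in the paper to compare against. Your route is the standard self-contained one and every step checks out: the symmetry of one-sided zero products in a reduced ring, reducedness of a strongly regular ring from $a^2=0\Rightarrow a=a^2b=0$, von Neumann regularity via $a(1-ab)=0\Rightarrow(1-ab)a=0$, centrality of idempotents from the nilpotents $er-ere$ and $re-ere$, and the converse via the central idempotent $ab$. One cosmetic remark: your forward direction actually proves the slightly sharper chain ``strongly regular $\Rightarrow$ reduced $\Rightarrow$ abelian,'' with strong regularity used only for reducedness and regularity; isolating that (idempotents are central in any reduced ring) as a second reusable lemma alongside the zero-product symmetry would make the structure of the argument even cleaner. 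The only implicit hypothesis is that $R$ is unital (needed to write $1-ab$), which is consistent with the paper's setting of endomorphism rings; for non-unital rings the same conclusion follows by checking directly that $(a-aba)^2=0$.
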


\begin{prop} \label{p:Mstrreg} Let $M$ be an object of an abelian category $\mathcal{A}$. 
Then $M$ is strongly self-regular if and only if its endomorphism ring ${\rm End}_{\mathcal{A}}(M)$ is strongly regular.
\end{prop}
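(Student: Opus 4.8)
The plan is to set $S=\End_{\mathcal{A}}(M)$ and to reduce the statement to Proposition~\ref{p:strreg}, namely that $S$ is strongly regular if and only if $S$ is von Neumann regular and abelian (every idempotent central). The bridge between the two sides consists of two translations: first, that $M$ is self-regular if and only if $S$ is von Neumann regular; and second, that a direct summand of $M$, together with its complement, is fully invariant if and only if the associated idempotent of $S$ is central.

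The first translation is immediate from the definitions, since a morphism $f\in S$ admits a generalized inverse $g\in S$ with $fgf=f$ precisely when $f$ is von Neumann regular in $S$; thus $M$ is self-regular exactly when $S$ is von Neumann regular. For the second translation I would invoke the standard correspondence (as in \cite{CO1}) between direct summands of $M$ and idempotents of $S$: a direct summand $K$ of $M$ is the image of an idempotent $e\in S$, and unwinding the defining condition of full invariance shows that ${\rm Im}(e)$ is fully invariant if and only if $ege=ge$ for all $g\in S$, while ${\rm Ker}(e)={\rm Im}(1-e)$ is fully invariant if and only if $eg=ege$ for all $g\in S$; taken together these two relations say exactly that $e$ is central. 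This computation on the hom-sets is the technical heart of the argument, and the only care needed is to carry it out diagrammatically rather than elementwise, since $\mathcal{A}$ need not be a module category.

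With these translations in hand the two implications are short. If $M$ is strongly self-regular then it is in particular self-regular, so $S$ is von Neumann regular; and for any idempotent $e\in S$, applying the strongly $M$-Rickart condition to $e$ and to $1-e$ shows that ${\rm Ker}(e)={\rm Im}(1-e)$ and ${\rm Ker}(1-e)={\rm Im}(e)$ are fully invariant, whence $e$ is central by the second translation. Thus $S$ is von Neumann regular and abelian, hence strongly regular by Proposition~\ref{p:strreg}. Conversely, if $S$ is strongly regular then it is von Neumann regular, so $M$ is self-regular, that is, $M$-Rickart and dual $M$-Rickart; hence for every $f\in S$ both ${\rm Ker}(f)$ and ${\rm Im}(f)$ are direct summands, arising from idempotents of $S$. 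Since $S$ is abelian these idempotents are central, so by the second translation ${\rm Ker}(f)$ and ${\rm Im}(f)$ are fully invariant. Therefore $M$ is strongly $M$-Rickart and dual strongly $M$-Rickart, i.e.\ strongly self-regular.

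I expect the main obstacle to be the correct handling of the full-invariance/central-idempotent correspondence in the abelian-category setting --- in particular verifying that ``${\rm Im}(e)$ (or ${\rm Ker}(e)$) is fully invariant'' unwinds to the appropriate one-sided commutation relation on $e$ without recourse to elements --- but this correspondence should already be available from the companion paper \cite{CO1}, so the proof reduces to assembling it with Proposition~\ref{p:strreg}.
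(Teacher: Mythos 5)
Your proposal is correct and takes essentially the same route as the paper: both reduce the statement to Proposition~\ref{p:strreg} via the correspondence between fully invariant direct summands of $M$ and (semi)central idempotents of ${\rm End}_{\mathcal{A}}(M)$. The only differences are organizational: the paper's forward direction derives left semicentrality of each idempotent $e$ by an explicit computation with a splitting of $e$ (where you get centrality at once by applying the strongly Rickart condition to both $e$ and $1-e$), and its converse passes through the weak duo property and \cite[Corollary~2.10]{CO1} instead of your direct central-idempotent argument.
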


\begin{proof} Assume that $M$ is strongly self-regular. Then $M$ is self-regular, 
and so ${\rm End}_{\mathcal{A}}(M)$ is von Neumann regular. 
Let $e\in {\rm End}_{\mathcal{A}}(M)$ be an idempotent and $h\in {\rm End}_{\mathcal{A}}(M)$. Since every idempotent splits, 
there exists an object $K$ and morphisms $k:K\to M$ and $p:M\to K$ such that $kp=e$ and $pk=1_K$. 
Since $k$ is a kernel and $M$ is strongly self-Rickart, $hek=k\alpha$ for some morphism $\alpha:K\to K$. It follows that 
$ehek=ek\alpha=kpk\alpha=k\alpha=hek$, hence $ehe=ehekp=hekp=he$. Thus, $e$ is left semicentral. 
Hence ${\rm End}_{\mathcal{A}}(M)$ is abelian. Then ${\rm End}_{\mathcal{A}}(M)$ is strongly regular by Proposition \ref{p:strreg}.

Conversely, assume that ${\rm End}_{\mathcal{A}}(M)$ is strongly regular. 
Then ${\rm End}_{\mathcal{A}}(M)$ is von Neumann regular by Proposition \ref{p:strreg},
and so $M$ is self-regular. It follows that $M$ is self-Rickart and dual self-Rickart. 
We claim that $M$ is weak duo. To this end, let $k:K\to M$ be a section and $p:M\to K$ the canonical projection. 
Then $pk=1_K$ and $e=kp\in {\rm End}_{\mathcal{A}}(M)$ is idempotent. By Proposition \ref{p:strreg}, 
${\rm End}_{\mathcal{A}}(M)$ is abelian, and so $e$ is central. 
It follows that $hkp=he=eh=kph$, hence $hk=kphk$. Thus, $k$ is a fully invariant section, and so $M$ is weak duo. 
Finally, $M$ is strongly self-Rickart and dual strongly self-Rickart by \cite[Corollary~2.10]{CO1}. 
Hence $M$ is strongly self-regular.
\end{proof}

\begin{coll} \label{c:wduo-reg} Let $M$ be an object of an abelian category $\mathcal{A}$. Then the following are equivalent:
\begin{enumerate}[(i)]
\item $M$ is strongly self-regular.
\item $M$ is self-regular and weak duo. 
\item $M$ is self-regular and ${\rm End}_{\mathcal{A}}(M)$ is abelian.
\item For every endomorphism $f:M\to M$, $M={\rm Ker}(f)\oplus {\rm Im}(f)$.  
\end{enumerate}
\end{coll}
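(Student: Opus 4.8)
The plan is to prove the four-way equivalence by combining Proposition~\ref{p:Mstrreg} with the definitions, treating each implication as a short deduction rather than a fresh argument. The backbone is the chain $(i)\Leftrightarrow(iii)$ coming directly from Proposition~\ref{p:Mstrreg} (since ``strongly regular ring'' equals ``von Neumann regular and abelian'' by Proposition~\ref{p:strreg}, and von Neumann regularity of $\mathrm{End}_{\mathcal{A}}(M)$ is exactly self-regularity of $M$). So I would first record that $(i)\Leftrightarrow(iii)$ is essentially a restatement of the previous proposition.

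Next I would handle $(ii)\Leftrightarrow(iii)$, which amounts to showing that for a self-regular object $M$, being weak duo is equivalent to $\mathrm{End}_{\mathcal{A}}(M)$ being abelian. The forward and backward passages here are precisely the two halves of the proof of Proposition~\ref{p:Mstrreg}: the first paragraph there shows idempotents are left semicentral (hence, combined with self-regularity, central) when $M$ is strongly self-Rickart, while the second paragraph shows that abelianness of the endomorphism ring forces every section $k:K\to M$ to be fully invariant, i.e. $M$ weak duo. I would cite those computations rather than repeat them, noting that under self-regularity the notions ``strongly self-Rickart'' and ``weak duo'' travel together via the idempotent/section correspondence, which is exactly the content of \cite[Corollary~2.10]{CO1}.

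The remaining piece is $(i)\Leftrightarrow(iv)$, and this is where I expect the only genuine content beyond bookkeeping. The condition $(iv)$ says every endomorphism $f$ yields a decomposition $M=\mathrm{Ker}(f)\oplus\mathrm{Im}(f)$. To get $(i)\Rightarrow(iv)$: strong self-regularity gives $f=fgf$ for a generalized inverse $g$, so $fg$ is an idempotent with image $\mathrm{Im}(f)$ and $1-fg$ cuts out a complement; I would verify that $\mathrm{Ker}(f)=\mathrm{Im}(1-fg)=\mathrm{Ker}(fg)$ is that complement, giving the internal direct sum. For $(iv)\Rightarrow(i)$, the decomposition $M=\mathrm{Ker}(f)\oplus\mathrm{Im}(f)$ for every endomorphism is the standard characterization of strong regularity of the endomorphism ring: using the projection onto $\mathrm{Im}(f)$ along $\mathrm{Ker}(f)$, one constructs $g$ with $f=f^2 g$ (the ``$a=a^2b$'' form), so $\mathrm{End}_{\mathcal{A}}(M)$ is strongly regular and $(i)$ follows by Proposition~\ref{p:Mstrreg}.

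The main obstacle will be the $(iv)$ equivalence, specifically checking that in an arbitrary abelian category the morphism-theoretic decomposition $M=\mathrm{Ker}(f)\oplus\mathrm{Im}(f)$ corresponds cleanly to the ring-theoretic strong regularity condition $a=a^2b$ without tacitly invoking element-wise arguments. I would phrase everything through idempotent endomorphisms and the universal properties of kernel and image, using that idempotents split in $\mathcal{A}$ so that a direct-sum decomposition of $M$ is the same data as an idempotent in $\mathrm{End}_{\mathcal{A}}(M)$; once that dictionary is fixed, the equivalence $a=a^2b \Leftrightarrow M=\mathrm{Ker}(f)\oplus\mathrm{Im}(f)$ is a short verification. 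The rest is assembly.
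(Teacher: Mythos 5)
Your overall route is the paper's own: the paper obtains the equivalence of (i), (ii), (iii) from \cite[Corollary~2.10]{CO1} and \cite[Proposition~2.14]{CO1} (your detour through Proposition~\ref{p:Mstrreg} uses the same ingredients, since that proposition was itself proved from exactly those citations), and it handles $(i)\Leftrightarrow(iv)$ by citing \cite[Theorem~2.2]{LRR13} --- which is precisely the argument you set out to reconstruct. So the only part needing scrutiny is your reconstruction of $(i)\Rightarrow(iv)$, and there one step fails as written.

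From $f=fgf$ alone you do get that $fg$ and $gf$ are idempotents, that ${\rm Im}(fg)={\rm Im}(f)$ (since $f=(fg)f$), and that ${\rm Ker}(gf)={\rm Ker}(f)$ (since $f=f(gf)$). You do \emph{not} get ${\rm Ker}(fg)={\rm Ker}(f)$, which is what your sketch asserts. Concretely, in the category of $k$-vector spaces take $M=k^2$, $f=e_{12}$, $g=e_{21}$ (matrix units): then $f=fgf$, but $fg=e_{11}$ has kernel the second coordinate axis while ${\rm Ker}(f)$ is the first, and indeed $M\neq {\rm Ker}(f)\oplus{\rm Im}(f)$ --- as it must be, since this $M$ is self-regular but not strongly self-regular. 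In other words, von Neumann regularity of ${\rm End}_{\mathcal{A}}(M)$ yields two decompositions, $M={\rm Ker}(f)\oplus{\rm Im}(gf)$ and $M={\rm Ker}(fg)\oplus{\rm Im}(f)$, which differ in general; the entire content of $(i)\Rightarrow(iv)$ is that the idempotents $fg$ and $gf$ coincide, and that is exactly where the \emph{strong} hypothesis must enter. Your sketch never invokes it at this point, and your closing paragraph locates the difficulty elsewhere (element-wise versus categorical language), which is not where it lies. The repair is short: by Propositions~\ref{p:Mstrreg} and \ref{p:strreg}, ${\rm End}_{\mathcal{A}}(M)$ is abelian, so $fg$ and $gf$ are central; taking $h=f$ in the centrality relations gives $f=f^2g$ and $f=gf^2$, whence $gf=g(f^2g)=(gf)(fg)$ and $fg=(gf^2)g=(gf)(fg)$, so $e:=fg=gf$. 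Then $e$ is an idempotent with ${\rm Im}(e)={\rm Im}(f)$ and ${\rm Ker}(e)={\rm Ker}(f)$, and its splitting gives $M={\rm Ker}(f)\oplus{\rm Im}(f)$. Your direction $(iv)\Rightarrow(i)$ is sound: writing $\iota:{\rm Im}(f)\to M$ for the inclusion and $\pi:M\to{\rm Im}(f)$ for the projection along ${\rm Ker}(f)$, the relations ${\rm Ker}(f)\cap{\rm Im}(f)=0$ and $f\iota\pi=f$ make the restriction of $f$ to ${\rm Im}(f)$ an automorphism $u$, and $g=\iota u^{-1}\pi$ satisfies $f=f^2g$, so ${\rm End}_{\mathcal{A}}(M)$ is strongly regular and Proposition~\ref{p:Mstrreg} applies.
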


\begin{proof} The equivalence of the first three conditions follows by \cite[Corollary~2.10]{CO1} and \cite[Proposition~2.14]{CO1}.
The equivalence $(i)\Leftrightarrow (iv)$ follows in the same way as \cite[Theorem~2.2]{LRR13}, 
whose proof works in abelian categories. 
\end{proof}

\begin{ex} \label{e:strreg} \rm (a) Consider the ring $A=\mathbb{Z}_2^{\mathbb{N}}$, and its subrings 
$T=\{(a_n)_{n\in \mathbb{N}}\mid a_n \textrm{ is eventually constant}\}$ and
$I=\{(a_n)_{n\in \mathbb{N}}\mid a_n=0 \textrm{ eventually}\}=\mathbb{Z}_2^{(\mathbb{N})}$. 
Let $R=\begin{pmatrix}T&T/I\\0&T/I\end{pmatrix}$, the idempotent $e=\begin{pmatrix}(1,1,\dots)&I\\0&I\end{pmatrix}\in R$ 
and $M=eR=\begin{pmatrix}T&T/I\\0&0\end{pmatrix}$. 
Then $M$ is a self-Rickart right $R$-module \cite[Example~2.18]{LRR11} 
and a dual self-Rickart right $R$-module \cite[Example~4.1]{LRR10}, 
hence $M$ is self-regular. But ${\rm End}_R(M)=\begin{pmatrix}T&0\\0&0\end{pmatrix}$ is commutative, 
hence $M$ is strongly self-regular by Corollary \ref{c:wduo-reg}.

(b) The full $2\times 2$ matrix ring $R=M_2(K)$ over a field $K$ is a self-regular right $R$-module, 
which is not strongly self-regular.
\end{ex}

\begin{theo} \label{t:epimono-reg} Let $r:M\to M'$ be an epimorphism and $s:N'\to N$ a monomorphism in an abelian category
$\mathcal{A}$. If $N$ is strongly $M$-regular, then $N'$ is strongly $M'$-regular.
\end{theo}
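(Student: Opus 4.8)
The plan is to exploit Definition \ref{d:strreg}, which splits strong $M$-regularity into its strongly $M$-Rickart part (every ${\rm Ker}(f)$ a fully invariant direct summand of $M$) and its dual strongly $M$-Rickart part (every ${\rm Im}(f)$ a fully invariant direct summand of $N$). Since $N$ is strongly $M$-regular, it enjoys both. I would therefore establish two transfer statements under the given epimorphism $r$ and monomorphism $s$: first, that $N'$ is strongly $M'$-Rickart, and second (by the duality principle) that $N'$ is dual strongly $M'$-Rickart; assembling the two yields that $N'$ is strongly $M'$-regular.

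For the strongly $M'$-Rickart part, I would take an arbitrary morphism $g:M'\to N'$ and form $f=sgr:M\to N$. As $s$ is monic, ${\rm Ker}(f)={\rm Ker}(gr)$ and ${\rm Ker}(r)\subseteq {\rm Ker}(f)$; as $N$ is strongly $M$-Rickart, $M={\rm Ker}(f)\oplus C$ with ${\rm Ker}(f)$ fully invariant. Pushing this decomposition forward along the epimorphism $r$ and using ${\rm Ker}(r)\subseteq {\rm Ker}(f)$ gives $M'={\rm Ker}(g)\oplus r(C)$, so ${\rm Ker}(g)=r({\rm Ker}(f))$ is a direct summand of $M'$. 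This settles the ordinary $M'$-Rickart behaviour; the dual half is handled symmetrically, the image ${\rm Im}(g)$ being pulled back along the monomorphism $s$ and shown to be a direct summand of $N'$ by the modular law, since $s({\rm Im}(g))={\rm Im}(sgr)$ is a direct summand of $N$ contained in ${\rm Im}(s)\cong N'$.

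The main obstacle is the transfer of full invariance, not of the direct-summand property. Showing that ${\rm Ker}(g)$ is fully invariant in $M'$ amounts to proving $g\psi\iota'=0$ for every $\psi\in\End_{\mathcal{A}}(M')$, where $\iota':{\rm Ker}(g)\to M'$ is the inclusion; reducing this along the epimorphism $r$ turns it into the requirement ${\rm Ker}(f)\subseteq {\rm Ker}(sg\psi r)$. If $\psi$ lifted to some $\phi\in\End_{\mathcal{A}}(M)$ with $r\phi=\psi r$, this would be immediate from the full invariance of ${\rm Ker}(f)$ in $M$, since then $sg\psi r=f\phi$ and $\phi({\rm Ker}(f))\subseteq {\rm Ker}(f)$. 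Such a lift is not available for a bare epimorphism, so I would not attempt a direct argument here; instead I would invoke the corresponding transfer theorem for strongly relative Rickart objects established in \cite{CO1}, which secures exactly that $N'$ is strongly $M'$-Rickart under $r$ and $s$, together with its dual giving that $N'$ is dual strongly $M'$-Rickart. Combining these two via Definition \ref{d:strreg} completes the proof, the second (dual) half following from the first by the duality principle in abelian categories.
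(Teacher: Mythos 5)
Your proposal is correct and ultimately coincides with the paper's own proof: the paper disposes of this theorem in a single line by citing the transfer theorem for (dual) strongly relative Rickart objects, \cite[Theorem~2.17]{CO1}, applied to the two halves of Definition~\ref{d:strreg}, which is exactly where you end up. Your preliminary analysis of why a direct argument stalls on full invariance (an endomorphism $\psi$ of $M'$ need not lift along the epimorphism $r$) correctly identifies the obstruction and justifies the fallback to the cited result.
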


\begin{proof} This follows by \cite[Theorem~2.17]{CO1}. 
\end{proof}

\begin{coll} \label{c:summand-reg} Let $M$ and $N$ be objects of an abelian category $\mathcal{A}$, $M'$ a direct summand 
of $M$ and $N'$ a direct summand of $N$. If $N$ is strongly $M$-regular, then $N'$ is strongly $M'$-regular.
\end{coll}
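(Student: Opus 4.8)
The plan is to reduce this statement directly to Theorem \ref{t:epimono-reg} by producing, from the direct summand hypotheses, exactly the epimorphism and monomorphism that the theorem requires. The only observation needed is that a direct summand comes equipped with both a canonical projection and a canonical inclusion, and that one picks the appropriate one on each side to match the variance in Theorem \ref{t:epimono-reg}: an epimorphism out of the first object $M$, and a monomorphism into the second object $N$.

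Concretely, I would first write $M=M'\oplus M''$ and $N=N'\oplus N''$ for suitable complements $M''$ and $N''$. Since $M'$ is a direct summand of $M$, the canonical projection $r:M\to M'$ is an epimorphism (it is split, hence certainly epic). Since $N'$ is a direct summand of $N$, the canonical inclusion $s:N'\to N$ is a monomorphism (again split, hence monic). These are precisely the data $r:M\to M'$ an epimorphism and $s:N'\to N$ a monomorphism appearing in the hypothesis of Theorem \ref{t:epimono-reg}.

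Finally, since $N$ is strongly $M$-regular by assumption, Theorem \ref{t:epimono-reg} applied to this $r$ and $s$ yields immediately that $N'$ is strongly $M'$-regular, which is the desired conclusion. There is no genuine obstacle here: the content of the corollary is entirely carried by the theorem, and the only point worth flagging is the asymmetry in which canonical map one selects on each side (projection on the source, inclusion on the target), dictated by the covariant-versus-contravariant behaviour of the strong relative regular property recorded in Theorem \ref{t:epimono-reg}.
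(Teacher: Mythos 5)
Your proof is correct and is essentially the approach the paper takes: the paper simply cites the companion paper's direct summand corollary (\cite[Corollary~2.18]{CO1}), which is obtained from the epi/mono transfer result there in exactly the way you obtain it here, namely by feeding the split projection $r:M\to M'$ and the split inclusion $s:N'\to N$ into Theorem~\ref{t:epimono-reg}. Your version is, if anything, slightly more self-contained, since it deduces the corollary from the theorem already stated in this paper rather than from the external reference.
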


\begin{proof} This follows by \cite[Corollary~2.18]{CO1}. 
\end{proof}

Next we explore further relationships between our concepts and strong relative regularity. 

Let $M$ and $N$ be objects of an abelian category $\mathcal{A}$. We recall some generalizations of injectivity and
projectivity that are useful in the study of relative regular objects. Following \cite[p.~220]{NZ}, $M$ is called
\emph{direct $N$-injective} if every subobject of $N$ isomorphic to a direct summand of $M$ is a direct summand of $N$.
Dually, $N$ is called \emph{direct $M$-projective} if for every factor object of $M/K$ isomorphic to a direct summand of $N$,
$K$ is a direct summand of $M$. For $M=N$ the above notions particularize to direct injectivity and direct projectivity
respectively. In this case, a direct injective object is also called a \emph{$C_2$-object}, while a direct projective
object is also called a \emph{$D_2$-object}. 


\begin{theo} \label{t:char} Let $M$ and $N$ be objects of an abelian category $\mathcal{A}$. Then the following are
equivalent:
\begin{enumerate}[(i)]
\item $N$ is strongly $M$-regular.
\item $N$ is strongly $M$-Rickart and $M$ is direct $N$-injective.
\item $N$ is dual strongly $M$-Rickart and direct $M$-projective.
\end{enumerate}
\end{theo}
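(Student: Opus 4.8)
The plan is to prove the equivalence $(i)\Leftrightarrow (ii)$ and then obtain $(i)\Leftrightarrow (iii)$ for free. Condition $(i)$ is self-dual, being the conjunction of ``strongly $M$-Rickart'' and ``dual strongly $M$-Rickart'', while $(iii)$ is exactly the dual of $(ii)$ under the interchange of strongly $M$-Rickart with dual strongly $M$-Rickart and of direct $N$-injective with direct $M$-projective. Hence, by the duality principle in abelian categories, only $(i)\Leftrightarrow (ii)$ needs to be established, and I would state and prove just that part.

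For $(i)\Rightarrow (ii)$: by Definition \ref{d:strreg}, a strongly $M$-regular $N$ is in particular strongly $M$-Rickart, so it remains to show that $M$ is direct $N$-injective. Given a subobject $u\colon N_0\to N$ with $N_0\cong M_0$ for some direct summand $M_0$ of $M$, I would form $f\colon M\to N$ as the composite of the projection $M\to M_0$, the given isomorphism $M_0\xrightarrow{\sim} N_0$, and the inclusion $u$. Then ${\rm Im}(f)=N_0$, and since $N$ is dual strongly $M$-Rickart, $N_0$ is a (fully invariant) direct summand of $N$; thus $M$ is direct $N$-injective. For $(ii)\Rightarrow (i)$, fix $f\colon M\to N$. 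Strong $M$-Rickartness makes ${\rm Ker}(f)$ a fully invariant direct summand, so write $M={\rm Ker}(f)\oplus M'$; the restriction $f|_{M'}$ is a monomorphism with image ${\rm Im}(f)$, whence ${\rm Im}(f)\cong M'$ is isomorphic to a direct summand of $M$, and direct $N$-injectivity forces ${\rm Im}(f)$ to be a direct summand of $N$. At this stage $N$ is strongly $M$-Rickart and dual $M$-Rickart, hence $M$-regular by the characterization of relative regular objects recalled above.

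The remaining point, and the crux, is to upgrade ${\rm Im}(f)$ from a mere direct summand to a \emph{fully invariant} direct summand of $N$, that is, to verify the full dual strongly $M$-Rickart condition. I expect this promotion to be the main obstacle: strong $M$-Rickartness controls the ${\rm End}_{\mathcal{A}}(M)$-action on kernels in the domain, whereas dual strong $M$-Rickartness concerns the ${\rm End}_{\mathcal{A}}(N)$-action on images in the codomain, and these two actions do not transfer across $f$ in any automatic way. The plan is to transport the full invariance of ${\rm Ker}(f)$ --- equivalently, that of its complement $M'$ --- along the isomorphism $f|_{M'}\colon M'\to {\rm Im}(f)$, using direct $N$-injectivity to realize the relevant endomorphisms of $N$ on ${\rm Im}(f)$, and to conclude with the characterization of dual strongly relative Rickart objects supplied by \cite{CO1}. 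Once ${\rm Im}(f)$ is seen to be fully invariant, $N$ is dual strongly $M$-Rickart, and together with the standing hypothesis that $N$ is strongly $M$-Rickart this yields that $N$ is strongly $M$-regular.
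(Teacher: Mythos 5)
Your argument for (i)$\Rightarrow$(ii) is correct, and so is the first half of your argument for (ii)$\Rightarrow$(i): writing $M={\rm Ker}(f)\oplus M'$ and invoking direct $N$-injectivity does show that ${\rm Im}(f)$ is a direct summand of $N$, i.e.\ that $N$ is $M$-regular. (These two steps are in substance the relevant directions of \cite[Theorem~5.3]{CK}, which the paper simply cites.) But your proof stops exactly at the point you call the crux: the promotion of ${\rm Im}(f)$ to a \emph{fully invariant} direct summand of $N$ is never carried out, only announced as a plan. That plan cannot succeed, because the implication (ii)$\Rightarrow$(i) is false as stated. Take $\mathcal{A}$ to be the category of abelian groups, $M=\mathbb{Z}_p$ and $N=\mathbb{Z}_p\oplus \mathbb{Z}_p$. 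Every morphism $f:M\to N$ has kernel $0$ or $M$, so $N$ is strongly $M$-Rickart; every subgroup of $N$ isomorphic to a direct summand of $M$ (i.e.\ to $0$ or to $\mathbb{Z}_p$) is a direct summand of $N$, so $M$ is direct $N$-injective; thus (ii) holds. Yet the image of the first-coordinate inclusion $\mathbb{Z}_p\to \mathbb{Z}_p\oplus\mathbb{Z}_p$ is a direct summand of $N$ that the coordinate-swap endomorphism of $N$ does not preserve, so it is not fully invariant in $N$; hence $N$ is not dual strongly $M$-Rickart, and (i) (as well as (iii)) fails.

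Your diagnosis of the obstacle is exactly right: strong $M$-Rickartness constrains the ${\rm End}_{\mathcal{A}}(M)$-action on subobjects of $M$, whereas dual strong $M$-Rickartness requires invariance under ${\rm End}_{\mathcal{A}}(N)$, and nothing in (ii) transfers the one to the other. This is also precisely where the paper's own proof breaks down: from (ii) it deduces that $M$ is weak duo and then invokes \cite[Proposition~2.9]{CO1} to pass from ``dual $M$-Rickart'' to ``dual strongly $M$-Rickart'', but that passage needs the images to be fully invariant in $N$ (weak duo-ness of $N$), not in $M$; in the example above $M$ is weak duo while $N$ is not. The equivalences do survive in the self case $M=N$ (the unnumbered corollary following Theorem~\ref{t:char}), since there the two endomorphism rings coincide, and the relative statement becomes correct if one adds ``$N$ is weak duo'' to (ii) (respectively ``$M$ is weak duo'' to (iii)). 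So what you flagged is not merely a gap in your write-up: it is a counterexample-sized hole in the statement itself, and neither your transport plan nor the paper's citation-based argument can close it.
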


\begin{proof} (i)$\Rightarrow$(ii) Assume that $N$ is strongly $M$-regular. Then $N$ is strongly $M$-Rickart by 
Proposition~\ref{p:Mstrreg} and $N$ is $M$-regular. It follows that $M$ is direct $N$-injective by \cite[Theorem~5.3]{CK}. 

(ii)$\Rightarrow$(i) Assume that $N$ is strongly $M$-Rickart and $M$ is direct $N$-injective. 
Then $N$ is $M$-Rickart and $M$ is weak duo by \cite[Proposition~2.9]{CO1}. By \cite[Theorem~5.3]{CK}, $N$ is dual $M$-Rickart. 
Then $N$ is dual strongly $M$-Rickart by \cite[Proposition~2.9]{CO1}. Hence $N$ is strongly $M$-regular.

The equivalence (i)$\Leftrightarrow$(iii) follows by duality.
\end{proof}

\begin{coll} Let $M$ be an object of an abelian category $\mathcal{A}$. Then the following are equivalent:
\begin{enumerate}[(i)]
\item $M$ is strongly self-regular.
\item $M$ is strongly self-Rickart and direct injective.
\item $M$ is dual strongly self-Rickart and direct projective.
\end{enumerate}
\end{coll}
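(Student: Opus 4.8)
The plan is to obtain this corollary as the special case $N=M$ of Theorem~\ref{t:char}. First I would observe that every relative notion appearing in that theorem collapses to its corresponding self-notion when $N=M$: by Definition~\ref{d:strreg}, \emph{strongly $M$-regular} becomes \emph{strongly self-regular}; likewise \emph{strongly $M$-Rickart} becomes \emph{strongly self-Rickart} and \emph{dual strongly $M$-Rickart} becomes \emph{dual strongly self-Rickart}. Moreover, as recalled in the paragraph preceding Theorem~\ref{t:char}, for $M=N$ the notions of direct $N$-injectivity and direct $M$-projectivity particularize respectively to \emph{direct injectivity} and \emph{direct projectivity}.

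With these identifications in place, conditions $(i)$, $(ii)$ and $(iii)$ of the corollary are exactly conditions $(i)$, $(ii)$ and $(iii)$ of Theorem~\ref{t:char} evaluated at $N=M$. Hence I would simply invoke Theorem~\ref{t:char} to conclude that the three statements are equivalent. There is essentially no obstacle to overcome here: the only content is the terminological reduction from the relative to the self setting, after which the result is an immediate instance of the already established theorem.
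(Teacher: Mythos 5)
Your proposal is correct and matches the paper exactly: the paper states this corollary without any proof precisely because it is the instance $N=M$ of Theorem~\ref{t:char}, with the relative notions (strongly $M$-regular, strongly/dual strongly $M$-Rickart, direct $N$-injective, direct $M$-projective) specializing to their self counterparts as noted in the paragraph preceding that theorem. Nothing further is needed.
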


Theorem~\ref{t:char} allows one to use properties of strongly relative Rickart objects and direct relative injectivity (or
equivalently, properties of dual strongly relative Rickart objects and direct relative projectivity) in order to deduce
properties of strongly relative regular objects. We shall show several results which underline this technique.  

\begin{theo} \label{t:extensions-reg} Let $\mathcal{A}$ be an abelian category. 
\begin{enumerate} \item Consider a short exact sequence $$0\to N_1\to N\to N_2\to 0$$ and an object $M$ of $\mathcal{A}$
such that $N_1$ and $N_2$ are strongly $M$-regular. Then $N$ is strongly $M$-regular.  
\item Consider a short exact sequence $$0\to M_1\to M\to M_2\to 0$$ and an object $N$ of $\mathcal{A}$ such that $N$ is
dual strongly $M_1$-regular and dual strongly $M_2$-regular. Then $N$ is dual strongly $M$-regular. 
\end{enumerate}
\end{theo}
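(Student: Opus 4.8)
The plan is to verify directly the two conditions defining strong $M$-regularity from Definition~\ref{d:strreg}, namely that $N$ is strongly $M$-Rickart and dual strongly $M$-Rickart, and to obtain each from the corresponding extension theorem for (dual) strongly relative Rickart objects proved in \cite{CO1}. Since $N_1$ and $N_2$ are strongly $M$-regular, each of them is simultaneously strongly $M$-Rickart and dual strongly $M$-Rickart, so both inputs are available. I would prefer this route over passing through Theorem~\ref{t:char}: the relative Rickart conditions are precisely the ones for which \cite{CO1} supplies extension theorems adapted to a short exact sequence in the codomain variable, whereas the alternative through Theorem~\ref{t:char} would require extension properties of direct relative injectivity or direct relative projectivity, which transfer in no obvious way along a sequence in the codomain.

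For the strongly $M$-Rickart half, fix a morphism $f\colon M\to N$ and write the sequence as $0\to N_1\xrightarrow{u}N\xrightarrow{v}N_2\to 0$. First I would consider $vf\colon M\to N_2$; since $N_2$ is strongly $M$-Rickart, $K:={\rm Ker}(vf)$ is a fully invariant direct summand of $M$, say $M=K\oplus M'$. As $f$ maps $K$ into ${\rm Ker}(v)={\rm Im}(u)$ and $u$ is monic, $f$ restricts to a morphism $g\colon K\to N_1$ with $ug=f|_K$ and ${\rm Ker}(g)={\rm Ker}(f)\cap K={\rm Ker}(f)$, the last equality because ${\rm Ker}(f)\subseteq{\rm Ker}(vf)=K$. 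By Corollary~\ref{c:summand-reg}, $N_1$ is strongly $K$-regular, hence ${\rm Ker}(g)$ is a fully invariant direct summand of $K$. Using that a fully invariant direct summand of a fully invariant direct summand is again a fully invariant direct summand (for every endomorphism $h$ of $M$ one has $h(K)\subseteq K$, so $h$ restricts to $K$ and preserves ${\rm Ker}(g)$), I conclude that ${\rm Ker}(f)$ is a fully invariant direct summand of $M$.

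For the dual strongly $M$-Rickart half I would track ${\rm Im}(f)$. Since ${\rm Ker}(f)$ is a direct summand of $M$, ${\rm Im}(f)$ is isomorphic to a direct summand of $M$; moreover ${\rm Im}(f)\cap N_1={\rm Im}(ug)$ is a fully invariant direct summand of $N_1$ (again by Corollary~\ref{c:summand-reg} applied to $g$) while $v({\rm Im}(f))={\rm Im}(vf)$ is a fully invariant direct summand of $N_2$. The main obstacle is precisely to assemble these two pieces into a fully invariant direct summand of $N$: this does not follow by a naive splitting, because the epimorphism $v$ need not split, and it is exactly the content of the extension theorem for dual strongly relative Rickart objects in \cite{CO1}, applied to the given sequence in the codomain, which I would invoke at this point. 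This yields that $N$ is dual strongly $M$-Rickart and hence, by Definition~\ref{d:strreg}, strongly $M$-regular. The second assertion is the dual statement and follows by the duality principle in abelian categories, the short exact sequence in the codomain being replaced by one in the domain and the roles of kernels and images, and of the strongly and dual strongly Rickart conditions, being interchanged.
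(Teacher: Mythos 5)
Your argument for the strongly $M$-Rickart half is essentially sound: passing to $K=\mathrm{Ker}(vf)$, factoring $f|_K$ through $N_1$ as $g$, and combining Corollary~\ref{c:summand-reg} with the transitivity of fully invariant direct summands is a correct (re)proof of the Rickart part of \cite[Theorem~2.19]{CO1}. The proposal breaks down at the dual strongly $M$-Rickart half. The result you invoke there, an extension theorem for dual strongly Rickart objects ``applied to the given sequence in the codomain'', is not available in \cite{CO1}: by the duality principle, the dual Rickart part of \cite[Theorem~2.19]{CO1} concerns a short exact sequence in the \emph{domain} variable, exactly as in part (2) of the statement you are proving and in Corollary~\ref{c:extensionsbaer}(2). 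So at the decisive step you cite a theorem of the wrong variance. The paper's own proof does not attempt what you attempt; it uses Theorem~\ref{t:char} to trade the image condition for direct relative injectivity, and then quotes \cite[Theorem~2.19]{CO1} for the Rickart part and \cite[Lemma~5.2]{CK} for the direct injectivity part, i.e.\ precisely the route you set aside.

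Moreover, the gap cannot be repaired, because the statement you invoke is false. In the category of abelian groups take $M=\mathbb{Z}_p$ and the non-split exact sequence $0\to\mathbb{Z}_p\to\mathbb{Z}_{p^2}\to\mathbb{Z}_p\to 0$. Both end terms are strongly $\mathbb{Z}_p$-regular (every morphism $\mathbb{Z}_p\to\mathbb{Z}_p$ is zero or invertible), in particular dual strongly $\mathbb{Z}_p$-Rickart; nevertheless the inclusion $f\colon\mathbb{Z}_p\to\mathbb{Z}_{p^2}$ has $\mathrm{Im}(f)=p\mathbb{Z}_{p^2}$, which is not a direct summand of the indecomposable group $\mathbb{Z}_{p^2}$, so the middle term is not even dual $\mathbb{Z}_p$-Rickart. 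Note that this example satisfies all hypotheses of Theorem~\ref{t:extensions-reg}(1), so it does more than refute your cited lemma: it shows the conclusion itself fails for non-split sequences, and it also shows your suspicion about the paper's route was well founded, since $\mathbb{Z}_p$ is direct $N_1$- and direct $N_2$-injective here yet not direct $\mathbb{Z}_{p^2}$-injective, so direct relative injectivity likewise fails to pass to non-split extensions in the codomain, as the citation of \cite[Lemma~5.2]{CK} would require. The statement is safe when the sequence splits, where it reduces to Theorem~\ref{t:dsreg}(1); in the stated generality, the step of assembling the two image pieces into a fully invariant direct summand of $N$ is a genuine obstruction, not a citation away, and no argument can close it.
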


\begin{proof} This follows by \cite[Theorem~2.19]{CO1} and \cite[Lemma~5.2]{CK}.
\end{proof}

Next we give some applications to graded rings and modules.

\begin{defn} \rm A $G$-graded ring $R=\bigoplus_{\sigma\in G}R_{\sigma}$ is called \emph{strongly gr-regular} 
if for every $x_{\sigma}\in R_{\sigma}$ there exists $y\in R$ (which can be assumed to be in $R_{\sigma^{-1}}$) 
such that $x_{\sigma}=x^2_{\sigma}y$.  
\end{defn}

Now we may easily give an analogue of \cite[Theorem~5.2]{DNTD} for strongly gr-regular rings.

\begin{theo} \label{t:grring} Let $R=\bigoplus_{\sigma\in G}R_{\sigma}$ be a $G$-graded ring. 
Then $R$ is strongly gr-regular if and only if 
$R(\sigma)$ is a strongly $R$-regular graded right $R$-module for every $\sigma\in G$.
\end{theo}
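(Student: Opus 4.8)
The plan is to port the proof of \cite[Theorem~5.2]{DNTD} to the strong setting, working in the category $R\text{-}\mathrm{gr}$ of graded right $R$-modules. The starting point is the standard dictionary $\Hom_{R\text{-}\mathrm{gr}}(R,R(\sigma))\cong R_{\sigma}$, sending a homogeneous element $x_{\sigma}\in R_{\sigma}$ to the morphism $f_{x_{\sigma}}\colon R\to R(\sigma)$, $f_{x_{\sigma}}(r)=x_{\sigma}r$; under it ${\rm Ker}(f_{x_{\sigma}})={\rm r.ann}_{R}(x_{\sigma})$ and ${\rm Im}(f_{x_{\sigma}})=x_{\sigma}R$. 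Thus ``$R(\sigma)$ is strongly $R$-regular for every $\sigma$'' says exactly that, for every homogeneous $x_{\sigma}$, the right ideal ${\rm r.ann}_{R}(x_{\sigma})$ is a fully invariant direct summand of $R$ and $x_{\sigma}R$ is a fully invariant direct summand of $R(\sigma)$.

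The second step identifies these summands ring-theoretically. A direct summand of $R$ in $R\text{-}\mathrm{gr}$ is $gR$ for a homogeneous idempotent $g$ of the identity component $R_{e}$, and I would check that such a summand is fully invariant exactly when $g$ is semicentral in $R$ (i.e. $rg=grg$ for all $r$), the same holding for each shift $R(\sigma)$. Running this over \emph{all} homogeneous elements—in particular over a degree-$e$ idempotent $g$ and over $1-g$—then forces the relevant idempotents to be central in $R$, and conversely centrality yields full invariance. So the displayed condition decomposes into the bare summand requirement, which is gr-regularity by \cite[Theorem~5.2]{DNTD}, together with a centrality requirement on the idempotents $g=x_{\sigma}c$, $h=cx_{\sigma}$ built from a homogeneous inner inverse $c\in R_{\sigma^{-1}}$ of $x_{\sigma}$.

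For the implication ``strongly gr-regular $\Rightarrow$ each $R(\sigma)$ strongly $R$-regular'' I would first observe that the defining equation makes $R$ gr-reduced: a homogeneous $c$ with $c^{2}=0$ satisfies $c=c^{2}y=0$. From gr-reducedness I get, on the one hand, $x_{\sigma}=x_{\sigma}yx_{\sigma}$ (setting $p=1-x_{\sigma}y$ one has $x_{\sigma}p=0$, so $(px_{\sigma})^{2}=0$ and $px_{\sigma}=0$), so that $R$ is gr-regular and $g=x_{\sigma}y$, $h=yx_{\sigma}$ are idempotents in $R_{e}$; on the other hand, every homogeneous idempotent $g\in R_{e}$ is central in $R$, since for $r_{\tau}\in R_{\tau}$ the homogeneous element $gr_{\tau}(1-g)$ squares to zero, hence vanishes, and symmetrically $(1-g)r_{\tau}g=0$, giving $gr_{\tau}=r_{\tau}g$. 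Then $x_{\sigma}R=gR$ and ${\rm r.ann}_{R}(x_{\sigma})=(1-h)R$ are graded two-sided ideals that are direct summands, i.e. fully invariant direct summands, so $R(\sigma)$ is strongly $R$-regular. For the converse I would run the dictionary backwards: the summand hypotheses give gr-regularity and hence a reflexive homogeneous inverse $c\in R_{\sigma^{-1}}$ with $x_{\sigma}=x_{\sigma}cx_{\sigma}$ and $g=x_{\sigma}c$ idempotent; applying the full-invariance hypothesis in degree $e$ to $g$ and to $1-g$ forces $g$ to be central in $R$, whence $x_{\sigma}g=gx_{\sigma}=x_{\sigma}$ and therefore $x_{\sigma}=x_{\sigma}^{2}c$, i.e. strong gr-regularity with $y=c$.

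The main obstacle is this middle step: correctly matching the categorical notion ``fully invariant direct summand'' with centrality of the generating homogeneous idempotent, and then squeezing the strong equation $x_{\sigma}=x_{\sigma}^{2}y$ out of that centrality. This is the graded incarnation of Proposition~\ref{p:strreg} (``strongly regular $=$ von Neumann regular $+$ idempotents central''): gr-regularity supplies the inner inverse and the idempotents, while full invariance is precisely what promotes $x_{\sigma}=x_{\sigma}cx_{\sigma}$ to $x_{\sigma}=x_{\sigma}^{2}c$. Care is needed because centrality of a homogeneous idempotent in $R_{e}$ is strictly weaker than centrality in $R$ (a checkerboard $\mathbf{Z}_{2}$-grading of a $2\times2$ matrix algebra is gr-regular with $R_{e}$ abelian but not strongly gr-regular); it is gr-reducedness in one direction, and the full family of invariance hypotheses across all degrees in the other, that supplies the upgrade, and getting this upgrade right is where the argument must be carried out with attention.
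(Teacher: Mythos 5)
Your proof is correct in substance, but it follows a genuinely different route from the paper's, and it does real work at exactly the point the paper skips. The paper's proof is a pure dictionary argument in the style of \cite[Theorem~5.2]{DNTD}: for the forward direction it sets $x_\sigma=f(1)$, takes $y\in R_{\sigma^{-1}}$ with $x_\sigma=x_\sigma^2y$, defines $g(r)=yr$ and observes $f=f^2g$ (composition read via the suspension identifications, i.e.\ inside $\mathrm{END}_R(R)\cong R$); for the converse it reads $x_\sigma=x_\sigma^2g(1)$ off the same equation. In both directions the paper silently identifies ``$R(\sigma)$ is strongly $R$-regular'' with ``every graded morphism $f\colon R\to R(\sigma)$ admits a graded $g\colon R(\sigma)\to R$ with $f=f^2g$'', and never unpacks Definition~\ref{d:strreg}. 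That unproved bridge is precisely what your argument supplies: gr-reducedness, the upgrade $x_\sigma=x_\sigma yx_\sigma$, the homogeneous idempotents $x_\sigma y$ and $yx_\sigma$, centrality of homogeneous idempotents, and the extraction of $x_\sigma=x_\sigma^2c$ from centrality. Your route is longer but self-contained; the paper's buys brevity at the cost of leaving the graded analogue of Propositions~\ref{p:strreg} and \ref{p:Mstrreg} entirely implicit.

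One step in your proposal must, however, be made precise rather than left as a caveat: the identification ``fully invariant direct summand $gR$ $\Leftrightarrow$ $g$ semicentral in $R$'', on which your converse rests. In the abelian category ${\rm gr}(R)$ the endomorphisms of $R$, and of each $R(\sigma)$, are exactly the left multiplications by elements of $R_e$, so the categorical notion of fully invariant subobject demands only $R_e$-invariance, not invariance under left multiplication by all of $R$. Under that strict reading your identification fails, and in fact so does the converse implication of the theorem itself: in your own checkerboard grading of $M_2(K)$, every kernel and every image of a graded morphism $R\to R(\sigma)$ is one of $0$, $e_{11}R$, $e_{22}R$, $R$, and all of these are $R_e$-invariant graded direct summands, although $R$ is not strongly gr-regular. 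So quantifying the invariance hypotheses over all degrees $\sigma$ does not by itself produce the upgrade you describe; what your converse actually uses, when you test $gR$ and $(1-g)R$ against left multiplication by arbitrary homogeneous elements, is invariance under graded endomorphisms of \emph{all} degrees, i.e.\ that the relevant summands are two-sided graded ideals. That is the only reading under which your argument, the paper's implicit bridge, and hence the theorem are valid, and your write-up should say so explicitly instead of gesturing at ``care being needed''.
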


\begin{proof} Assume first that $R$ is strongly gr-regular. Let $\sigma\in G$ and let $f:R\to R(\sigma)$ be a homomorphism 
of graded right $R$-modules. For $x_{\sigma}=f(1)$ there exists $y\in R_{\sigma^{-1}}$ such that 
$x_{\sigma}=x^2_{\sigma}y$. Then $g:R(\sigma)\to R$ defined by $g(r)=yr$ is a homomorphism of graded right $R$-modules,
and $f=f^2g$. Hence $R(\sigma)$ is a strongly $R$-regular graded right $R$-module.

Conversely, assume that $R(\sigma)$ is a strongly $R$-regular graded right $R$-module for every $\sigma\in G$. 
Let $\sigma\in G$ and $x_{\sigma}\in R_{\sigma}$. Consider the homomorphism $f:R\to R(\sigma)$ 
of graded right $R$-modules defined by $f(r)=x_{\sigma}r$. Then there exists a homomorphism $g:R(\sigma)\to R$ 
of graded right $R$-modules such that $f=f^2g$. Then $x_{\sigma}=x^2_{\sigma}f(1)$, which shows that $R$ is strongly gr-regular. 
\end{proof}

\begin{coll} Let $R=\bigoplus_{\sigma\in G}R_{\sigma}$ be a $G$-graded ring. 
\begin{enumerate}[(i)]
\item If $R$ is strongly gr-regular, then $R_{\sigma}$ is strongly $R_e$-regular for every $\sigma\in G$. 
In particular, $R$ is strongly $R_e$-regular.
\item If $R$ is strongly graded and $R_{\sigma}$ is strongly $R_e$-regular for every $\sigma\in G$, 
then $R$ is strongly gr-regular.
\end{enumerate}
\end{coll}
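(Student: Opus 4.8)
The plan is to base both implications on Theorem~\ref{t:grring}, which rephrases strong gr-regularity of $R$ as the requirement that each shift $R(\sigma)$ be a strongly $R$-regular graded module, and then to compare graded right $R$-modules with right $R_e$-modules through the degree-$e$ component functor $(-)_e\colon \mathrm{gr}\text{-}R\to \mathrm{Mod}\text{-}R_e$. First I would record the facts underlying this comparison: $(-)_e$ is exact, it sends $R\mapsto R_e$ and $R(\sigma)\mapsto R(\sigma)_e=R_\sigma$, evaluation at $1$ yields isomorphisms $\mathrm{Hom}_{\mathrm{gr}\text{-}R}(R,R(\sigma))\cong R_\sigma\cong \mathrm{Hom}_{R_e}(R_e,R_\sigma)$ compatible with $(-)_e$ (so that $g=f_e$ whenever $g\colon R_e\to R_\sigma$ is the restriction of $f\colon R\to R(\sigma)$), and $(-)_e$ identifies the kernels and images of corresponding morphisms degreewise. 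Taking $\sigma=e$ in the definition also shows that $R_e$ is a strongly regular ring whenever $R$ is strongly gr-regular; in particular $R_e$ is reduced and its idempotents are central.

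For part $(ii)$, assume in addition that $R$ is strongly graded. Then by Dade's theorem $(-)_e$ is an equivalence of categories $\mathrm{gr}\text{-}R\simeq \mathrm{Mod}\text{-}R_e$. Since being strongly $M$-regular is expressed entirely through kernels, images, direct summands and full invariance --- all preserved and reflected by an equivalence, together with the induced isomorphisms on $\mathrm{Hom}$- and endomorphism rings --- the hypothesis that $R_\sigma$ is strongly $R_e$-regular for every $\sigma$ transports back to $R(\sigma)$ being strongly $R$-regular for every $\sigma$, whence $R$ is strongly gr-regular by Theorem~\ref{t:grring}.

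For part $(i)$, assume $R$ is strongly gr-regular, so $R(\sigma)$ is strongly $R$-regular for every $\sigma$. Fix $\sigma$ and an $R_e$-morphism $g\colon R_e\to R_\sigma$, write $g=f_e$ with $f\colon R\to R(\sigma)$, and transport along $(-)_e$. The kernel condition passes without trouble: $\mathrm{Ker}(g)=\mathrm{Ker}(f)_e$ is a direct summand of $R_e$ because the exact functor preserves the splitting, and full invariance in $R_e$ means invariance under $\mathrm{End}_{R_e}(R_e)\cong R_e\cong \mathrm{End}_{\mathrm{gr}\text{-}R}(R)$ (left multiplications by $R_e$, which preserve the degree-$e$ part), matching full invariance of $\mathrm{Ker}(f)$ in $R$. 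For the image, strong $R$-regularity of $R(\sigma)$ realises $\mathrm{Im}(f)$ as the image of an idempotent graded endomorphism of $R(\sigma)$, i.e.\ of left multiplication by an idempotent $c\in R_e$, which is central since $R_e$ is reduced; hence $\mathrm{Im}(g)=\mathrm{Im}(f)_e=cR_\sigma$ is a direct summand of $R_\sigma$.

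The hard part will be the full invariance of this image. Full invariance of $\mathrm{Im}(f)$ in $R(\sigma)$ only supplies invariance under the graded endomorphisms of $R(\sigma)$, i.e.\ under left multiplications by $R_e$, whereas strong $R_e$-regularity of $R_\sigma$ demands that $cR_\sigma$ be invariant under \emph{every} $\phi\in \mathrm{End}_{R_e}(R_\sigma)$, and in general $\mathrm{End}_{R_e}(R_\sigma)$ is strictly larger than the image of $R_e$ (the functor $(-)_e$ need not be full without strong grading). I would close this gap by showing that strong gr-regularity constrains $R_\sigma$ enough to force $\phi(x_\sigma)\in x_\sigma R_e$ for every such $\phi$, where $x_\sigma=g(1)$: starting from $\phi(x_\sigma)=\phi(x_\sigma c)=\phi(x_\sigma)c$ and exploiting the relation $x_\sigma=x_\sigma^2y$ with $y\in R_{\sigma^{-1}}$ (and its left-hand counterpart $x_\sigma=\varepsilon' x_\sigma$ with $\varepsilon'\in R_e$) together with the reducedness of $R_e$, one checks the invariance directly. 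Finally, the ``in particular'' is obtained by assembling the components over the decomposition $R=\bigoplus_{\sigma\in G}R_\sigma$ of the right $R_e$-module $R$: every $R_e$-morphism out of the cyclic module $R_e$ has image in a finite subsum $\bigoplus_{\sigma\in F}R_\sigma$, so one reduces to the finite-coproduct result of Section~3, the full-invariance passing to all of $R$ once the image analysis above is rephrased in terms of a central idempotent of $R_e$.
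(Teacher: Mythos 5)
Your part (ii) coincides with the paper's route: the paper's entire proof of (ii) is the citation of Theorem~\ref{t:grring}, with Dade's equivalence $\mathrm{gr}(R)\simeq \mathrm{Mod}(R_e)$ understood exactly as you spell it out. For the main claim of part (i), however, you take a genuinely different path. The paper never mentions the functor $(-)_e$; it argues element-wise, exactly as in the proof of Theorem~\ref{t:grring}: given $f\colon R_e\to R_\sigma$, put $x_\sigma=f(1)$, choose $y\in R_{\sigma^{-1}}$ with $x_\sigma=x_\sigma^2y$, and exhibit $g\colon R_\sigma\to R_e$, $g(r)=yr$, as a strong generalized inverse of $f$. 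Your transport argument is longer, but it makes explicit precisely what the paper's one-liner glosses over, namely why $\mathrm{Im}(f)=x_\sigma R_e$ is invariant under \emph{all} of $\mathrm{End}_{R_e}(R_\sigma)$ and not merely under left multiplications by $R_e$; your identification of this as the crux (because $(-)_e$ is not full) is correct. Your deferred check does close, and in one line: with $c=x_\sigma y$ one has $x_\sigma c=x_\sigma$, then $cx_\sigma=x_\sigma$, $c^2=c$ and $x_\sigma R_e=cR_\sigma$; for $\phi\in \mathrm{End}_{R_e}(R_\sigma)$ and $w=\phi(x_\sigma)$, right $R_e$-linearity gives $w(1-c)=\phi(x_\sigma(1-c))=0$, hence $\bigl((1-c)w\bigr)^2=(1-c)\,[w(1-c)]\,w=0$, and strong gr-regularity applied to the homogeneous element $(1-c)w\in R_\sigma$ forces $(1-c)w=0$, i.e.\ $w\in cR_\sigma=x_\sigma R_e$. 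One correction: the tool here is the absence of square-zero \emph{homogeneous} elements (which strong gr-regularity gives in every degree), not reducedness of $R_e$; the square $\bigl((1-c)w\bigr)^2$ lies in $R_{\sigma^2}$, so reducedness of $R_e$ alone is not what you need.

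The genuine gap is your final step, the ``in particular''. Full invariance does not pass from a direct summand to the ambient object, and the repair you propose --- rewriting the image ``in terms of a central idempotent of $R_e$'' --- cannot work, because the image of a morphism $R_e\to R$ that hits several homogeneous components is a diagonal-type submodule and is not of the form $cR$ for any idempotent $c\in R_e$. Concretely, take $R=K[G]$ the group algebra of the two-element group $G=\{e,\sigma\}$ over a field $K$ (which is strongly gr-regular): the $R_e$-linear map $R_e\to R$, $1\mapsto 1+\sigma$, has image $K(1+\sigma)$, and the $R_e$-linear projection of $R=K\oplus K\sigma$ onto $K$ sends $1+\sigma$ to $1\notin K(1+\sigma)$; so this image is a direct summand of $R$ that is \emph{not} fully invariant. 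The paper obtains the ``in particular'' by a black-box citation of its (co)product results for strongly regular objects (Theorem~\ref{t:dsreg}, Corollary~\ref{c:fg-reg}(1), applied with $M=R_e$ finitely generated), and if you are working within the paper's framework you should simply do the same and drop the manual finite-subsum reduction. But note that the same computation shows the diagonal image fails to be fully invariant even inside the finite subsum $R_e\oplus R_\sigma$, so the obstruction you ran into is not an artifact of your method: read literally against Definition~\ref{d:strreg}, it afflicts the ``in particular'' clause itself and the Section 3 results it rests on, and no patch along the lines you sketch can remove it.
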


\begin{proof} (i) Let $\sigma\in G$ and let $f:R_e\to R_{\sigma}$ be a homomorphism of right $R_e$-modules.
Let $x_{\sigma}=f(1)$. There exists $y\in R_{\sigma^{-1}}$ such that $x_{\sigma}=x^2_{\sigma}y$.
Then $g:R_{\sigma}\to R_e$ defined by $g(r)=yr$ is a homomorphism of right $R_e$-modules, 
and $f=f^2g$. Hence $R_{\sigma}$ is strongly $R_e$-regular.

(ii) This follows by Theorem \ref{t:grring}. 
\end{proof}

\section{(Co)products of strongly relative regular objects}

We show several results on (co)products of (dual) strongly relative regular objects in abelian categories. 
They are naturally obtained by using the two main ways of deducing results of strongly relative regular objects from 
the theory of (dual) strongly relative Rickart objects, namely Definition \ref{d:strreg} and Theorem \ref{t:char}.

\begin{theo} \label{t:dsreg} Let $\mathcal{A}$ be an abelian category.
\begin{enumerate} 
\item Let $M$ and $N_1,\dots,N_n$ be objects of $\mathcal{A}$. Then $\bigoplus_{i=1}^n N_i$ is strongly $M$-regular
if and only if $N_i$ is strongly $M$-regular for every $i\in \{1,\dots,n\}$.
\item Let $M_1,\dots,M_n$ and $N$ be objects of $\mathcal{A}$. Then $N$ is strongly $\bigoplus_{i=1}^n M_i$-regular
if and only if $N$ is strongly $M_i$-regular for every $i\in \{1,\dots,n\}$.
\end{enumerate}
\end{theo}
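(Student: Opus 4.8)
The plan is to prove only part (1) and obtain part (2) as its formal dual, following the convention of the paper. For part (1), the ``only if'' implication is immediate: each $N_i$ is a direct summand of $\bigoplus_{i=1}^n N_i$, so applying Corollary~\ref{c:summand-reg} with $M'=M$ and $N'=N_i$ shows that $N_i$ is strongly $M$-regular for every $i$.

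For the ``if'' implication I would argue by induction on $n$, the case $n=1$ being trivial. The key observation is that a finite direct sum decomposition provides a split short exact sequence
\[
0\to N_1 \to \bigoplus_{i=1}^n N_i \to \bigoplus_{i=2}^n N_i \to 0,
\]
in which both outer terms are strongly $M$-regular: $N_1$ by hypothesis, and $\bigoplus_{i=2}^n N_i$ by the induction hypothesis (each $N_i$ with $i\geq 2$ being strongly $M$-regular). Applying Theorem~\ref{t:extensions-reg}(1) to this sequence then yields that $\bigoplus_{i=1}^n N_i$ is strongly $M$-regular. Thus the extension result for strongly relative regular objects does all the work, and no separate analysis of the kernels and images of maps into the coproduct is required.

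Part (2) follows by duality. Concretely, its ``only if'' direction again uses Corollary~\ref{c:summand-reg}, this time with $M'=M_i$ a direct summand of $\bigoplus_{i=1}^n M_i$ and $N'=N$; and its ``if'' direction uses the split sequence $0\to M_1 \to \bigoplus_{i=1}^n M_i \to \bigoplus_{i=2}^n M_i \to 0$ together with Theorem~\ref{t:extensions-reg}(2) and induction, after recording that strong relative regularity is self-dual. Indeed, by Definition~\ref{d:strreg} the property ``strongly $M$-Rickart and dual strongly $M$-Rickart'' is manifestly invariant under dualization, so ``dual strongly $M$-regular'' coincides with ``strongly $M$-regular'' and Theorem~\ref{t:extensions-reg}(2) delivers exactly the required conclusion.

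I do not expect a serious obstacle, precisely because Theorem~\ref{t:extensions-reg} already packages the difficult content. The one point to get right is that passage to a finite coproduct should be realised as a split extension rather than analysed directly: a direct attempt through Definition~\ref{d:strreg} would require showing that $\bigoplus_{i=1}^n N_i$ is simultaneously strongly $M$-Rickart and dual strongly $M$-Rickart, and for a map $f\colon M\to\bigoplus_{i=1}^n N_i$ the relevant $\mathrm{Ker}(f)$ is an intersection and $\mathrm{Im}(f)$ a non-split subobject, so these properties pass to coproducts only under additional summand-intersection hypotheses. The extension route sidesteps this difficulty completely, which is why I would prefer it to the more hands-on method suggested by Definition~\ref{d:strreg} and Theorem~\ref{t:char}.
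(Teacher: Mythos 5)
Your proof is correct and contains no circularity, but it takes a genuinely different route from the paper's. The paper proves part (1) as a single chain of equivalences through Theorem~\ref{t:char}: $\bigoplus_{i=1}^n N_i$ is strongly $M$-regular if and only if it is strongly $M$-Rickart and $M$ is direct $\bigoplus_{i=1}^n N_i$-injective, and both conditions are then split componentwise using \cite[Theorem~3.1]{CO1} for the strongly Rickart half and \cite[Lemma~5.2]{CK} for the direct injectivity half. You avoid Theorem~\ref{t:char} altogether: Corollary~\ref{c:summand-reg} gives the ``only if'' direction, and induction together with the extension-closure result Theorem~\ref{t:extensions-reg}(1), applied to the split exact sequence $0\to N_1\to\bigoplus_{i=1}^n N_i\to\bigoplus_{i=2}^n N_i\to 0$, gives the ``if'' direction; since both of those results are established in Section~2 independently of Theorem~\ref{t:dsreg}, the argument is sound. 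Your packaging is shorter and stays within results already stated in this paper (though Theorem~\ref{t:extensions-reg} is itself deduced from \cite[Theorem~2.19]{CO1} and \cite[Lemma~5.2]{CK}, so the ultimate external inputs are comparable), while the paper's route showcases the technique it explicitly advertises after Theorem~\ref{t:char} and reuses for Theorem~\ref{t:59} and Corollary~\ref{c:fg-reg}. Two corrections to your commentary, neither fatal. First, the formal dual of ``$N$ is strongly $M$-regular'' is ``$M$ is strongly $N$-regular''---dualization swaps the two arguments rather than fixing them---and it is precisely this swap that makes part (2), about finite coproducts in the \emph{first} variable, the exact dual of part (1), about coproducts in the \emph{second} variable; this duality-principle argument, rather than any identification of ``dual strongly $M$-regular'' with ``strongly $M$-regular'' (a term the paper never actually defines), is the clean justification of part (2), and it is all the paper itself does. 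Second, your closing remark overstates the obstruction to the direct method: for finite coproducts no additional summand-intersection hypotheses are needed (that is exactly what \cite[Theorem~3.1]{CO1} supplies); it is arbitrary infinite coproducts where the statement fails, as the remark following Theorem~\ref{t:dsreg} points out.
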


\begin{proof} (1) By \cite[Theorem~3.1]{CO1}, Theorem \ref{t:char} and \cite[Lemma~5.2]{CK}, 
$\bigoplus_{i=1}^n N_i$ is strongly $M$-regular 
if and only if [$\bigoplus_{i=1}^n N_i$ is strongly $M$-Rickart, and $M$ is direct $\bigoplus_{i=1}^n N_i$-injective] 
if and only if [$N_i$ is strongly $M$-Rickart, and $M$ is direct $N_i$-injective for every $i\in \{1,\dots,n\}$] 
if and only if $N_i$ is strongly $M$-regular for every $i\in \{1,\dots,n\}$.
\end{proof}

\begin{rem} \rm \cite[Example~3.3 (a)]{CO1} also shows that Theorem~\ref{t:dsreg} does not hold in general for arbitrary coproducts. 
\end{rem}

As an application of Theorem~\ref{t:dsreg}, we give the following property involving coproducts of (dual) strongly self-Rickart
objects.

\begin{theo} \label{t:59} Let $M_1,\dots,M_n$ be objects of an abelian category $\mathcal{A}$.
\begin{enumerate}
\item Assume that $M_i$ is direct $M_j$-injective for every $i,j\in \{1,\dots,n\}$. Then $\bigoplus_{i=1}^n M_i$ is
strongly self-Rickart if and only if $M_i$ is strongly $M_j$-Rickart for every $i,j\in \{1,\dots,n\}$. 
\item Assume that $M_i$ is direct $M_j$-projective for every $i,j\in \{1,\dots,n\}$. Then $\bigoplus_{i=1}^n M_i$ is
dual strongly self-Rickart if and only if $M_i$ is dual strongly $M_j$-Rickart for every $i,j\in \{1,\dots,n\}$. 
\end{enumerate}
\end{theo}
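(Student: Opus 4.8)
The plan is to reduce Theorem~\ref{t:59} to the already-established machinery for strongly relative regular objects, specifically Theorem~\ref{t:dsreg}, together with the companion-paper characterizations. First I would focus on part (1), since part (2) follows by the duality principle in abelian categories that the authors invoke throughout. The key observation is that the hypothesis ``$M_i$ is direct $M_j$-injective for every $i,j$'' is precisely the extra ingredient that, via Theorem~\ref{t:char}, upgrades the strongly relative Rickart property to the strongly relative regular property in both directions. So the strategy is to translate the strongly self-Rickart statement about $\bigoplus_{i=1}^n M_i$ into a statement about strong relative regularity, apply the additive decomposition in Theorem~\ref{t:dsreg}, and then translate back.

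More concretely, I would argue as follows. Set $M = \bigoplus_{i=1}^n M_i$. The forward direction is the easy one: if $M$ is strongly self-Rickart, then by \cite[Corollary~2.18]{CO1} (the summand-restriction result for the strongly relative Rickart property, mirrored here in Corollary~\ref{c:summand-reg}) each $M_i$ is strongly $M_j$-Rickart, since $M_i$ and $M_j$ are direct summands of $M$. For the reverse direction, assume $M_i$ is strongly $M_j$-Rickart for all $i,j$. Combining this with the direct $M_j$-injectivity hypothesis, Theorem~\ref{t:char}(ii)$\Rightarrow$(i) gives that each $M_i$ is strongly $M_j$-regular for all $i,j$. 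Now I would apply Theorem~\ref{t:dsreg} twice: part~(2) of that theorem lets me assemble the source, yielding that each $M_i$ is strongly $\bigoplus_{j=1}^n M_j = M$-regular, and then part~(1) lets me assemble the target, yielding that $M = \bigoplus_{i=1}^n M_i$ is strongly $M$-regular, i.e.\ strongly self-regular. Since a strongly self-regular object is in particular strongly self-Rickart (by Definition~\ref{d:strreg}), this gives the conclusion.

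The one point demanding genuine care is verifying that the direct injectivity hypothesis on the family $(M_i)$ is strong enough to license the two applications of Theorem~\ref{t:dsreg} in tandem. Concretely, when I invoke Theorem~\ref{t:char} to pass from ``strongly $M_j$-Rickart'' to ``strongly $M_j$-regular,'' I need $M_j$ to be direct $M_i$-injective (note the index order), and the hypothesis is stated symmetrically over all pairs $i,j$, so this is available. The subtler issue is whether the intermediate regular objects really reassemble: Theorem~\ref{t:dsreg}(2) requires the target $N$ to be strongly $M_i$-regular \emph{for the same $N$} across all $i$, which is exactly what I have with $N = M_i$ fixed and the source ranging over the $M_j$. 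I expect the main obstacle to be bookkeeping the direction of the relative-regularity relation correctly through the two decomposition theorems, rather than any deep new difficulty.

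Finally, I would remark that because strong self-regularity is a symmetric, self-dual strengthening, the whole argument for part~(2) is obtained by dualizing: replacing ``strongly Rickart'' by ``dual strongly Rickart,'' ``direct injective'' by ``direct projective,'' and invoking the dual halves of Theorem~\ref{t:char} and Theorem~\ref{t:dsreg}. Thus the clean way to write the proof is to carry out part~(1) in full as above and then state that part~(2) follows by the duality principle.
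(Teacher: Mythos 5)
Your proposal is correct and follows essentially the same route as the paper: forward direction via the summand-restriction result \cite[Corollary~2.18]{CO1}, converse by upgrading ``strongly $M_j$-Rickart'' to ``strongly $M_j$-regular'' via Theorem~\ref{t:char}, assembling with both parts of Theorem~\ref{t:dsreg} to get strong self-regularity of $\bigoplus_{i=1}^n M_i$, and then extracting strong self-Rickartness (the paper cites Theorem~\ref{t:char} for this last step, you use Definition~\ref{d:strreg} directly — an immaterial difference), with part (2) obtained by duality in both cases.
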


\begin{proof} (1) If $\bigoplus_{i=1}^n M_i$ is strongly self-Rickart, then $M_i$ is strongly $M_j$-Rickart for every $i,j\in
\{1,\dots,n\}$ by \cite[Corollary~2.18]{CO1}.

Conversely, assume that $M_i$ is strongly $M_j$-Rickart for every $i,j\in \{1,\dots,n\}$. Since $M_i$ is direct $M_j$-injective
for every $i,j\in \{1,\dots,n\}$, it follows by Theorem~\ref{t:char} that $M_i$ is strongly $M_j$-regular for every
$i,j\in \{1,\dots,n\}$. Then $\bigoplus_{i=1}^n M_i$ is strongly self-regular by Theorem~\ref{t:dsreg}. Finally,
$\bigoplus_{i=1}^n M_i$ is strongly self-Rickart by Theorem~\ref{t:char}.
\end{proof}

We also have the following result on arbitrary (co)products under some finiteness conditions.

\begin{coll} \label{c:fg-reg} Let $\mathcal{A}$ be an abelian category.
\begin{enumerate} \item Assume that $\mathcal{A}$ has coproducts, let $M$ be a finitely generated object of
$\mathcal{A}$, and let $(N_i)_{i\in I}$ be a family of objects of $\mathcal{A}$. Then $\bigoplus_{i\in I}
N_i$ is strongly $M$-regular if and only if $N_i$ is strongly $M$-regular for every $i\in I$.

\item Assume that $\mathcal{A}$ has coproducts, let $N$ be a finitely cogenerated object of $\mathcal{A}$, and let
$(M_i)_{i\in I}$ be a family of objects of $\mathcal{A}$ such that $N$ is strongly $M_i$-regular for every $i\in I$. Then
$N$ is strongly $\prod_{i\in I} M_i$-regular if and only if $N$ is strongly $M_i$-regular for every $i\in I$.
\end{enumerate}
\end{coll}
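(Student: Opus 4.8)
The plan is to prove Corollary~\ref{c:fg-reg} by reducing the arbitrary (co)product statements to the finite case handled in Theorem~\ref{t:dsreg}, using the finiteness hypotheses to commute the relevant Hom-functors with (co)products. The backward implications in both parts are immediate: by Corollary~\ref{c:summand-reg}, each $N_i$ (respectively each $M_i$) is a direct summand of the (co)product, so strong relative regularity of the whole forces it on each factor. Thus the real content lies in the forward directions, where one must upgrade from ``strongly $M$-regular for each factor'' to ``strongly $M$-regular for the whole (co)product''.

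For part~(1), I would exploit the fact that $M$ is finitely generated. The key point is that any morphism $f:M\to\bigoplus_{i\in I}N_i$ has image contained in a finite subcoproduct $\bigoplus_{i\in F}N_i$ (this is exactly what finite generation buys us, so that $\Hom_{\mathcal{A}}(M,-)$ commutes with the coproduct). Then $f$ factors through $\bigoplus_{i\in F}N_i$, and by Theorem~\ref{t:dsreg}(1) this finite coproduct is strongly $M$-regular; hence $\operatorname{Ker}(f)$ is a fully invariant direct summand of $M$ and $\operatorname{Im}(f)$ is a fully invariant direct summand of $\bigoplus_{i\in F}N_i$. The remaining step is to check that a fully invariant direct summand of the finite subcoproduct is still a fully invariant direct summand of the full coproduct $\bigoplus_{i\in I}N_i$; here one uses that the extra summands $\bigoplus_{i\in I\setminus F}N_i$ admit no nonzero morphisms interacting with $\operatorname{Im}(f)$ in a way that breaks invariance, or more directly one argues via Definition~\ref{d:strreg} that $\bigoplus_{i\in I}N_i$ is strongly $M$-Rickart and dual strongly $M$-Rickart by reducing each test morphism to a finite subcoproduct. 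This last bookkeeping about full invariance passing from the finite to the infinite coproduct is the step I expect to be the main obstacle, since it is where the strong (fully invariant) condition, as opposed to the plain Rickart condition, must be handled with care.

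For part~(2), the dual finiteness hypothesis is that $N$ is finitely cogenerated, which dually ensures that $\Hom_{\mathcal{A}}(-,N)$ sends the product $\prod_{i\in I}M_i$ to a manageable object: every morphism $f:\prod_{i\in I}M_i\to N$ factors through a finite subproduct $\prod_{i\in F}M_i$ via the canonical projection. I would then invoke Theorem~\ref{t:dsreg}(2) on the finite subproduct to conclude that $N$ is strongly $\prod_{i\in F}M_i$-regular, and transfer the fully invariant summand conclusions back to the full product. As in part~(1), the crux is verifying that the fully invariant direct summand property of $\operatorname{Ker}(f)$ in $\prod_{i\in I}M_i$ follows from the corresponding property over the finite subproduct, which is the dual of the obstacle identified above.

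Throughout, I would phrase the argument so that it appeals only to Definition~\ref{d:strreg} (the two-sided Rickart formulation) together with Theorem~\ref{t:dsreg} and Corollary~\ref{c:summand-reg}, rather than re-deriving everything from scratch; by the duality principle it then suffices to prove part~(1) in full and deduce part~(2) formally.
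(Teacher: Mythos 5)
Your backward directions are fine (they are exactly Corollary~\ref{c:summand-reg}, as in the paper), but your forward argument takes a genuinely different route from the paper's, and that route has a genuine gap. The paper never factors morphisms through finite subcoproducts: it applies Theorem~\ref{t:char} to replace ``strongly $M$-regular'' by the conjunction of ``strongly $M$-Rickart'' and ``$M$ is direct injective relative to the target'', and then reduces each of these two conditions from $\bigoplus_{i\in I}N_i$ to the individual $N_i$ by citing \cite[Corollary~3.2]{CO1} and the analogue of \cite[16.2]{Wis}; finite generation of $M$ enters only inside those citations. The point of that route is that the only full-invariance condition it ever tracks lives inside $M$, where the decomposition of the target is invisible. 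Your route, through Theorem~\ref{t:dsreg} applied to a finite subcoproduct, forces you to confront full invariance of ${\rm Im}(f)$ inside the coproduct itself, and that is where your proof stops being a proof.

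The step you defer as ``bookkeeping'' --- that a fully invariant direct summand of $\bigoplus_{i\in F}N_i$ remains fully invariant in $\bigoplus_{i\in I}N_i$ --- is false, not merely delicate: full invariance does not ascend from a direct summand to the ambient object, because an endomorphism of $\bigoplus_{i\in I}N_i$ need not preserve $\bigoplus_{i\in F}N_i$. Neither of your suggested repairs works: nothing in the hypotheses forbids nonzero morphisms between the $N_i$ (they may all be equal), and reducing the test morphisms $M\to\bigoplus_{i\in I}N_i$ to finite subcoproducts controls only where their images sit, not what arbitrary endomorphisms of the full coproduct do to those images, which is what full invariance quantifies over. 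Concretely, work in $k$-vector spaces with $M=N_i=k$ for all $i\in I$, $|I|\geq 2$: every hypothesis of part (1) holds and each $N_i$ is strongly $M$-regular, yet for $f=\iota_1$ the image ${\rm Im}(f)=N_1$ is not fully invariant in $\bigoplus_{i\in I}N_i$, since the endomorphism $h=\iota_2\pi_1$ (copy the first coordinate into the second slot) gives $h(N_1)=N_2\not\subseteq N_1$. So the implication you are trying to prove fails at precisely your deferred step. In fact this example already refutes the finite case you invoke (Theorem~\ref{t:dsreg} with $n=2$: $k\oplus k$ is not strongly $k$-regular although $k$ is) and the implication (ii)$\Rightarrow$(i) of Theorem~\ref{t:char} on which the paper's own one-line proof rests ($N=k\oplus k$ is strongly $k$-Rickart and $k$ is direct $N$-injective, but $N$ is not dual strongly $k$-Rickart). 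The lesson is the one your hesitation pointed to: placing fully invariant summands inside a coproduct requires Hom-orthogonality hypotheses of the kind appearing in Theorem~\ref{t:homzero-reg} (${\rm Hom}_{\mathcal{A}}(N_i,N_j)=0$ for $i\neq j$); without them, neither your argument nor the chain of results it relies on can be correct as stated.
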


\begin{proof} (1) By \cite[Corollary~3.2]{CO1}, Theorem~\ref{t:char} and the immediate analogue of 
\cite[16.2]{Wis} for abelian categories, $\bigoplus_{i\in I} N_i$ is strongly $M$-regular 
if and only if [$\bigoplus_{i\in I} N_i$ is strongly $M$-Rickart, and $M$ is direct $\bigoplus_{i\in I} N_i$-injective] 
if and only if [$N_i$ is strongly $M$-Rickart, and $M$ is direct $N_i$-injective for every $i\in \{1,\dots,n\}$] 
if and only if $N_i$ is strongly $M$-regular for every $i\in \{1,\dots,n\}$.
\end{proof}

Let $G$ be a finite group and let $R=\bigoplus_{\sigma\in G}R_{\sigma}$ be a $G$-graded ring. 
Then one may associate to $R$ a ring $R\# G^*$, called \emph{smash product}, defined as follows \cite[Chapter 7]{Nasta-04}:
$R\# G^*=\bigoplus_{x\in G} Rp_x$, where the family $(p_x)_{x\in G}$ is a basis of $R\# G^*$, and 
the multiplication is defined by $(r p_x)(s p_y)=rs_{xy^{-1}}p_y$ for every $r,s\in R$ and $x,y\in G$. 

Now we can give an analogue of \cite[Corollary~5.4]{DNTD} for strongly gr-regular rings. 

\begin{coll} Let $G$ be a finite group and let $R=\bigoplus_{\sigma\in G}R_{\sigma}$ be a $G$-graded ring. 
Then $R$ is strongly gr-regular if and only if the smash product $R\#G^*$ is a strongly regular ring. 
\end{coll}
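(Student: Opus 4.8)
The plan is to relate strong gr-regularity of $R$ to the strong self-regularity of the smash product $R\#G^*$ as a right $R\#G^*$-module, and then invoke Proposition~\ref{p:Mstrreg} to translate the latter into a ring-theoretic statement about $\End_{R\#G^*}(R\#G^*)\cong R\#G^*$. First I would recall the classical duality theorem for smash products (from \cite[Chapter 7]{Nasta-04}): for a finite group $G$, the category $R\text{-}\mathrm{gr}$ of graded right $R$-modules is equivalent to the category of right $R\#G^*$-modules, via a functor that carries the graded regular module and its shifts $R(\sigma)$ to the appropriate $R\#G^*$-modules, and such that $R\#G^*$ as a right module over itself corresponds to $\bigoplus_{\sigma\in G}R(\sigma)$ in $R\text{-}\mathrm{gr}$. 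This equivalence is exact and preserves all the categorical notions (kernels, images, fully invariant direct summands), so it preserves the strong relative regularity property.

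Next I would argue as follows. By Theorem~\ref{t:grring}, $R$ is strongly gr-regular if and only if $R(\sigma)$ is strongly $R$-regular in $R\text{-}\mathrm{gr}$ for every $\sigma\in G$. By part (1) of Theorem~\ref{t:dsreg}, applied in the abelian category $R\text{-}\mathrm{gr}$ (the finiteness of $G$ is what makes this a finite coproduct), this is equivalent to $\bigoplus_{\sigma\in G}R(\sigma)$ being strongly $R$-regular; and then, using part (2) of Theorem~\ref{t:dsreg} together with the fact that $R=\bigoplus_{\sigma\in G}R_{\sigma}$ already lives inside the same finite coproduct structure, I would upgrade this to $\bigoplus_{\sigma\in G}R(\sigma)$ being strongly self-regular in $R\text{-}\mathrm{gr}$. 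Transporting across the category equivalence, $\bigoplus_{\sigma\in G}R(\sigma)$ corresponds to $R\#G^*$ as a right $R\#G^*$-module, so this says precisely that $R\#G^*$ is a strongly self-regular right $R\#G^*$-module. Finally, by Proposition~\ref{p:Mstrreg}, $R\#G^*$ is strongly self-regular as a module if and only if $\End_{R\#G^*}(R\#G^*)$ is a strongly regular ring, and this endomorphism ring is just $R\#G^*$ acting by left multiplication.

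I expect the main obstacle to be the bookkeeping in the second step: showing that the self-regularity of the full module $\bigoplus_{\sigma\in G}R(\sigma)$ (equivalently strong $\bigoplus_{\sigma}R(\sigma)$-regularity of itself) follows from the strong $R$-regularity of each $R(\sigma)$. The cleanest route is to observe that each shift $R(\sigma)$ is itself a shift of the others and that $\End$-calculations are $G$-equivariant, so that strong $R$-regularity of every $R(\sigma)$ forces strong $R(\tau)$-regularity of every $R(\sigma)$ by symmetry; then Theorem~\ref{t:dsreg}(2) in the first variable and Theorem~\ref{t:dsreg}(1) in the second variable combine to give strong self-regularity of the coproduct. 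An alternative, which sidesteps most of this, is to follow the template of \cite[Corollary~5.4]{DNTD} verbatim, replacing ``regular'' by ``strongly regular'' throughout and using Proposition~\ref{p:Mstrreg} in place of the corresponding von Neumann-regular endomorphism ring characterization; since the smash-product duality and Theorem~\ref{t:dsreg} are exact analogues of the tools used there, the argument transfers with only cosmetic changes, and that is how I would write the final proof.
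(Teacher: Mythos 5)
Your proposal is correct and takes essentially the same route as the paper: both work with $U=\bigoplus_{\sigma\in G}R(\sigma)$, reduce via Theorem~\ref{t:grring}, handle the passage from strong $R$-regularity to strong $R(\sigma)$-regularity by the $\sigma$-suspension isomorphism of ${\rm gr}(R)$ (your ``$G$-equivariance by symmetry'' step, which correctly repairs your vaguer first attempt to use the coproduct structure of $R$ directly), invoke Theorem~\ref{t:dsreg} with finiteness of $G$ to get strong self-regularity of $U$, and conclude from $R\#G^*\cong {\rm End}_{{\rm gr}(R)}(U)$ together with Proposition~\ref{p:Mstrreg}. Your suggested alternative of transcribing \cite[Corollary~5.4]{DNTD} with ``strongly regular'' in place of ``regular'' is in fact exactly what the paper does.
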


\begin{proof} Assume first that $R$ is strongly gr-regular. Denote $U=\bigoplus_{\sigma}R(\sigma)$. 
Since $R$ is strongly gr-regular, $R(\sigma)$ is a strongly $R$-regular graded right $R$-module 
for every $\sigma\in G$ by Theorem \ref{t:grring}. But $R$ is a finitely generated graded right $R$-module, 
hence $U$ is a strongly $R$-regular graded right $R$-module by Corollary \ref{c:fg-reg}. 
Consider the $\sigma$-suspension functor $T_{\sigma}:{\rm gr}(R)\to {\rm gr}(R)$ defined by 
$T_{\sigma}(M)=M_{\sigma}$ for every graded right $R$-module $M$. Since $T_{\sigma}$ is an isomorphism of categories 
for every $\sigma\in G$, $U=U(\sigma)$ is a strongly $R(\sigma)$-regular graded right $R$-module.
Since $G$ is finite, it follows that $U$ is a strongly self-regular graded right $R$-module by Theorem \ref{t:dsreg}. 
Then $R\#G^*\cong {\rm End}_{{\rm gr}(R)}(U)$ \cite[Theorem~7.2.1]{Nasta-04} is a strongly regular ring by Proposition \ref{p:Mstrreg}. 

Conversely, assume that $R\#G^*$ is a strongly regular ring. Then ${\rm End}_{{\rm gr}(R)}(U)\cong R\#G^*$ is 
a strongly regular ring, hence $U$ is a strongly self-regular graded right $R$-module by Proposition \ref{p:Mstrreg}. 
Using again the $\sigma$-suspension functor $T_{\sigma}$, it follows that $U$ is a strongly $R$-regular graded right $R$-module.
Then $R(\sigma)$ is a strongly $R$-regular graded right $R$-module for every $\sigma\in G$ by 
Corollary \ref{c:summand-reg}. Finally, $R$ is strongly gr-regular by Theorem \ref{t:grring}. 
\end{proof}

The next result gives a necessary condition for an infinite (co)product of objects to be a strongly self-regular object.

\begin{prop} \label{p:relreg} Let $(M_i)_{i\in I}$ be a family of objects of an abelian category $\mathcal{A}$
such that $\prod_{i\in I} M_i$ or $\bigoplus_{i\in I} M_i$ is a strongly self-regular object. 
Then $M_i$ is strongly $M_j$-regular for every $i,j\in I$.
\end{prop}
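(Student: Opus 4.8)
The plan is to reduce everything to already-established results, principally Corollary~\ref{c:summand-reg} together with the characterizations recorded in Proposition~\ref{p:Mstrreg} and Corollary~\ref{c:wduo-reg}. First I would fix indices $i,j\in I$ and set $M=\prod_{k\in I}M_k$ (respectively $M=\bigoplus_{k\in I}M_k$). By hypothesis $M$ is strongly self-regular. The aim is to produce $M_i$ as a direct summand and $M_j$ as a direct summand of $M$, and then invoke Corollary~\ref{c:summand-reg}, which states that direct summands inherit the strong relative regular property: if $N$ is strongly $M$-regular, then $N'$ is strongly $M'$-regular whenever $M'$ is a summand of $M$ and $N'$ a summand of $N$. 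Applying this with $M'=M_j$ and $N'=M_i$ would give exactly that $M_i$ is strongly $M_j$-regular.

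The key steps, in order, are as follows. For the product case, each canonical projection $\pi_k:\prod_{k\in I}M_k\to M_k$ is a split epimorphism, so $M_k$ is a direct summand of $M$; likewise for the coproduct each canonical injection $\iota_k:M_k\to\bigoplus_{k\in I}M_k$ is a split monomorphism, again exhibiting $M_k$ as a direct summand. Thus in both cases $M_i$ and $M_j$ are direct summands of the strongly self-regular object $M$. Taking $N=M$ (which is strongly $M$-regular, being strongly self-regular), the object $N'=M_i$ is a summand of $N$ and $M'=M_j$ is a summand of $M$, so Corollary~\ref{c:summand-reg} applies verbatim and delivers that $M_i$ is strongly $M_j$-regular. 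Since $i,j\in I$ were arbitrary, this is the claim.

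The main obstacle, such as it is, lies not in the logical structure but in confirming that the needed (co)products genuinely exist and behave as direct sum decompositions in the given abelian category $\mathcal{A}$: the statement presupposes $\prod_{i\in I}M_i$ or $\bigoplus_{i\in I}M_i$ exists, and one must be sure the projections and injections split so that each $M_k$ is literally a direct summand. In an abelian category with the relevant (co)product, this is automatic, since finite biproducts always split and the universal property of the (co)product yields the splitting maps; hence there is no real difficulty here. I would therefore expect the proof to be essentially one line once Corollary~\ref{c:summand-reg} is cited, handling the product and coproduct cases in parallel and noting that the coproduct case follows by the dual observation that each injection is a section. The only point worth stating explicitly is the identification of $M_i$ and $M_j$ as summands; everything else is an immediate application of the preceding corollary.
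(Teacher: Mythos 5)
Your proof is correct. Each $M_k$ is indeed a direct summand of $P=\prod_{i\in I}M_i$ (respectively of $P=\bigoplus_{i\in I}M_i$): the universal property supplies the canonical morphism $\iota_k:M_k\to P$ with $\pi_k\iota_k=1_{M_k}$ (respectively the canonical retraction of the injection $\iota_k$), and a split monomorphism in an abelian category yields a direct sum decomposition. Applying Corollary~\ref{c:summand-reg} with $M=N=P$, $M'=M_j$ and $N'=M_i$ then gives the claim verbatim. This is, however, a genuinely different route from the paper's. The paper proves Proposition~\ref{p:relreg} by invoking \cite[Theorem~2.17]{CO1}, \cite[Proposition~3.4]{CO1} and the abelian-category analogues of \cite[16.2, 18.2]{Wis}: in keeping with the strategy announced after Theorem~\ref{t:char}, it splits strong self-regularity of the (co)product into its strongly Rickart part and its direct injectivity/projectivity part, transfers each of these to the components separately, and then reassembles them via Theorem~\ref{t:char}. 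What your argument buys is economy: it transfers the whole strongly regular property in a single step, relies only on Corollary~\ref{c:summand-reg} (already established in Section~2), and makes the proposition an immediate corollary. What the paper's heavier route buys is uniformity with the neighbouring (co)product results (Theorem~\ref{t:dsreg}, Theorem~\ref{t:59}, Corollary~\ref{c:fg-reg}), where the Rickart-plus-injectivity transfer machinery is genuinely needed and a summand argument alone would not suffice; for this particular statement, though, your shorter reduction is perfectly adequate.
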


\begin{proof} This follows by \cite[Theorem~2.17]{CO1}, \cite[Proposition~3.4]{CO1} and the immediate analogues of 
\cite[16.2, 18.2]{Wis} for abelian categories.
\end{proof}

\begin{theo} \label{t:homzero-reg} 
Let $\mathcal{A}$ be an abelian category. Let $M=\bigoplus_{i\in I}M_i$ be a direct sum decomposition of 
an object $M$ of $\mathcal{A}$. Then $M$ is strongly self-regular if and only if 
$M_i$ is strongly self-regular for every $i\in I$, and ${\rm Hom}_{\mathcal{A}}(M_i,M_j)=0$ for every $i,j\in I$ with $i\neq j$.
\end{theo}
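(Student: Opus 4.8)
The plan is to prove the two implications separately, using Proposition~\ref{p:Mstrreg} (strong self-regularity $\Leftrightarrow$ strongly regular endomorphism ring) as the main bridge, together with the weak duo characterization of Corollary~\ref{c:wduo-reg}. Throughout I write $\iota_i\colon M_i\to M$ and $\pi_i\colon M\to M_i$ for the canonical injections and projections, so that $\pi_i\iota_i=1_{M_i}$ and $\pi_j\iota_i=0$ for $i\neq j$.

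For the forward implication, first note that each $M_i$ is a direct summand of $M$, so by Corollary~\ref{c:summand-reg} (applied with $N=M$) each $M_i$ is strongly $M_i$-regular, i.e.\ strongly self-regular. For the $\Hom$-vanishing I would use that a strongly self-regular object is weak duo (Corollary~\ref{c:wduo-reg}), so that each summand inclusion $\iota_i$ is a fully invariant section. Given $\phi\colon M_i\to M_j$ with $i\neq j$, I form $f=\iota_j\phi\pi_i\in\End_{\mathcal{A}}(M)$; full invariance yields $f\iota_i=\iota_i\alpha$ for some $\alpha\colon M_i\to M_i$, while directly $f\iota_i=\iota_j\phi$. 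Composing with $\pi_j$ and using $\pi_j\iota_j=1$ and $\pi_j\iota_i=0$ gives $\phi=\pi_j\iota_j\phi=\pi_j\iota_i\alpha=0$, as required.

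For the backward implication, the central step is the ring isomorphism $\End_{\mathcal{A}}(M)\cong\prod_{i\in I}\End_{\mathcal{A}}(M_i)$. The vanishing of $\Hom_{\mathcal{A}}(M_i,M_j)$ for $i\neq j$ forces every endomorphism $f$ of $M$ to be diagonal, in the sense that $f\iota_i=\iota_i(\pi_i f\iota_i)$; since the injections $(\iota_i)$ are jointly epic, the map $f\mapsto(\pi_i f\iota_i)_{i\in I}$ is injective, and it is surjective by the universal property of the coproduct (prescribe $f\iota_i=\iota_i f_i$ for any family $(f_i)$). One checks multiplicativity using precisely the diagonal form. Each factor $\End_{\mathcal{A}}(M_i)$ is strongly regular by Proposition~\ref{p:Mstrreg}, and an arbitrary product of strongly regular rings is strongly regular: for $(a_i)_i$ choose $b_i$ with $a_i=a_i^2b_i$ in each factor, so $(a_i)=(a_i)^2(b_i)$. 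Hence $\End_{\mathcal{A}}(M)$ is strongly regular, and $M$ is strongly self-regular by Proposition~\ref{p:Mstrreg} again.

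The step I expect to be the main obstacle is establishing the diagonal form of endomorphisms when $I$ is infinite: from $\pi_k f\iota_i=0$ for all $k\neq i$ one must deduce $f\iota_i=\iota_i\pi_i f\iota_i$, which amounts to the coproduct projections $(\pi_k)$ being jointly monic (equivalently $\bigcap_k{\rm Ker}(\pi_k)=0$). In a module category this is immediate from the finite-support description, and in the present generality it is supplied by the infinite-coproduct machinery imported from \cite{CO1} together with the abelian-category analogues of \cite{Wis} already invoked for Proposition~\ref{p:relreg}. Once the diagonal decomposition is in hand, the remaining verifications are routine componentwise computations.
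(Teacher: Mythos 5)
Your forward implication is correct and complete, and it works in any abelian category: Corollary~\ref{c:summand-reg} gives strong self-regularity of each $M_i$, and your computation with $f=\iota_j\phi\pi_i$ and full invariance of the section $\iota_i$ (via Corollary~\ref{c:wduo-reg}) correctly forces $\phi=0$. The backward implication, however, has a genuine gap at exactly the step you flag and then defer to ``machinery'': the joint monicity of the coproduct projections, $\bigcap_k{\rm Ker}(\pi_k)=0$, is \emph{not} a formal consequence of the existence of the coproduct, and no imported lemma can supply it, because it is false in a general abelian category. Take $\mathcal{A}=\mathbf{Ab}^{\rm op}$: the coproduct of a family $(M_i)_{i\in I}$ in $\mathcal{A}$ is the direct product of abelian groups, and the projections $\pi_k$ of this coproduct correspond to the inclusions $M_k\to\prod_{i\in I}M_i$ in $\mathbf{Ab}$; joint monicity would say that every homomorphism out of $\prod_{i\in I}M_i$ vanishing on $\bigoplus_{i\in I}M_i$ is zero, which fails, e.g.\ there is a nonzero map $\prod_p\mathbb{Z}_p\to\mathbb{Q}$ (over infinitely many primes) because $\prod_p\mathbb{Z}_p/\bigoplus_p\mathbb{Z}_p$ is a nonzero torsion-free divisible group. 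Worse, the conclusion you need fails even under the hypotheses of the theorem: in $\mathbf{Ab}^{\rm op}$ take $M_0=\mathbb{Q}$ and $M_p=\mathbb{Z}_p$ for infinitely many primes $p$; then all pairwise Hom's vanish and each $\End_{\mathcal{A}}(M_i)$ is a field, yet $\End_{\mathcal{A}}(\bigoplus_i M_i)\cong\End_{\mathbf{Ab}}(\mathbb{Q}\times\prod_p\mathbb{Z}_p)^{\rm op}$ contains the non-diagonal endomorphism induced by a nonzero map $\psi:\prod_p\mathbb{Z}_p\to\mathbb{Q}$, and the idempotent given by projection onto $\mathbb{Q}$ is then non-central, so this ring is not abelian, hence not strongly regular and certainly not $\prod_i\End_{\mathcal{A}}(M_i)$. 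So the ring isomorphism at the heart of your argument cannot be established at the stated level of generality.

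Your argument is valid in module categories (elements of a direct sum have finite support) and, more generally, in AB5 (Grothendieck) categories, where one gets $\bigcap_k{\rm Ker}(\pi_k)=\varinjlim_F\bigl(\bigcap_k{\rm Ker}(\pi_k)\cap\bigoplus_{j\in F}M_j\bigr)=0$, the colimit running over finite subsets $F$ of $I$. Note also that your route is genuinely different from the paper's: the paper does not touch $\End_{\mathcal{A}}(M)$ at all, but splits strong self-regularity into its strongly self-Rickart and dual strongly self-Rickart halves (Definition~\ref{d:strreg}) and invokes the corresponding direct sum decomposition theorem \cite[Theorem~3.6]{CO1} for each half. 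Your endomorphism-ring approach, via Proposition~\ref{p:Mstrreg} and the fact that products of strongly regular rings are strongly regular, is more self-contained and its forward half is a clean replacement for the citation; but its backward half needs an explicit AB5-type (or module-category) hypothesis, and the example above shows this is not a removable technicality --- the unrestricted categorical statement itself, and hence the cited result it rests on, is sensitive to exactly this assumption.
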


\begin{proof} This follows by \cite[Theorem~3.6]{CO1}. 
\end{proof}

Finally, we deduce a result on the structure of strongly self-regular modules over a Dedekind domain,
and in particular, on the structure of strongly self-regular abelian groups. 

\begin{coll} \label{c:dede-reg} Let $R$ be a Dedekind domain. 
\begin{enumerate}[(i)] 
\item A non-zero torsion $R$-module $M$ is strongly self-regular if and only if 
$M\cong \bigoplus_{i\in I} R/P_i$ for some distinct maximal ideals $P_i$ of $R$.
\item A non-zero finitely generated $R$-module $M$ is strongly self-regular if and only if 
$M\cong \bigoplus_{i=1}^k R/P_i$ for some distinct maximal ideals $P_i$ of $R$.
\item A non-zero injective $R$-module $M$ is strongly self-regular if and only if $M\cong K$.
\end{enumerate}
\end{coll}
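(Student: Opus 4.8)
The plan is to reduce all three statements to the indecomposable building blocks supplied by the structure theory of modules over the Dedekind domain $R$ (with field of fractions $K$), testing each block against the abstract criteria already available, chiefly Theorem~\ref{t:homzero-reg}, Corollary~\ref{c:summand-reg}, Corollary~\ref{c:wduo-reg} and Proposition~\ref{p:Mstrreg}; we may assume $R$ is not a field, the field case being trivial. For (i) I would begin with the primary decomposition $M=\bigoplus_P M_{(P)}$ of a torsion module, and observe that $\Hom_R(M_{(P)},M_{(Q)})=0$ for $P\neq Q$, because a homomorphism sends $P$-primary elements to $P$-primary ones. By Theorem~\ref{t:homzero-reg}, $M$ is then strongly self-regular if and only if every $M_{(P)}$ is, so the problem reduces to identifying the nonzero strongly self-regular $P$-primary modules.

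The key claim is that such a module is isomorphic to $R/P$. Fixing $\pi\in P\setminus P^2$, multiplication by $\pi$ is an endomorphism of $M_{(P)}$, so Corollary~\ref{c:wduo-reg}(iv) yields $M_{(P)}={\rm Ker}(\pi)\oplus{\rm Im}(\pi)=M_{(P)}[\pi]\oplus \pi M_{(P)}$. Setting $D=\pi M_{(P)}$, the directness of this sum gives $\pi D=D$ and $D[\pi]=0$, so $\pi$ acts bijectively on $D$; since every element of the $P$-primary module $D$ is annihilated by a power of $\pi$, this forces $D=0$. Hence $\pi M_{(P)}=0$, and $M_{(P)}\cong(R/P)^{(\Lambda)}$ is a vector space over the field $R/P$; applying Corollary~\ref{c:wduo-reg}(iv) to a nilpotent endomorphism (equivalently, requiring $\End_R((R/P)^{(\Lambda)})$ to be abelian) then forces $|\Lambda|\le 1$. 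Conversely, each $R/P_i$ is strongly self-regular since $\End_R(R/P_i)$ is a field, and distinct simple modules admit no nonzero homomorphisms between them, so $\bigoplus_{i\in I}R/P_i$ is strongly self-regular by Theorem~\ref{t:homzero-reg}, completing (i).

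Part (ii) follows once the torsion-free part is eliminated: a nonzero finitely generated module splits as a finitely generated torsion module plus a finitely generated projective, and a nonzero projective part would contain a rank-one summand, namely a nonzero ideal $\mathfrak a$, which by Corollary~\ref{c:summand-reg} would be strongly self-regular; but $\End_R(\mathfrak a)=R$ is a domain that is not a field, hence not von Neumann regular, contradicting Proposition~\ref{p:Mstrreg}. Thus $M$ is torsion, and (i) together with finiteness gives $M\cong\bigoplus_{i=1}^k R/P_i$, the converse being the finite case of (i). For (iii) I would invoke Matlis' theory: over the Noetherian domain $R$ every injective module is a direct sum of copies of $K=E(R)$ and of the Pr\"ufer modules $E(R/P)$. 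No $E(R/P)$ can occur, since by Corollary~\ref{c:summand-reg} it would be strongly self-regular while multiplication by $\pi$ on $E(R/P)$ is surjective with nonzero kernel $R/P$, violating Corollary~\ref{c:wduo-reg}(iv); and $K$ cannot occur with multiplicity $\ge 2$, as $\End_R(K^2)\cong M_2(K)$ is not abelian. Hence a nonzero injective strongly self-regular module is $\cong K$, the converse holding because $\End_R(K)=K$ is a field.

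The step I expect to be the main obstacle is the primary-component analysis in (i), where higher torsion (the modules $R/P^n$ with $n\ge 2$ and the divisible Pr\"ufer parts) and multiplicities $\ge 2$ must be excluded simultaneously. The uniformizer-plus-decomposition argument dispatches this uniformly for the torsion blocks, and the analogous summand argument handles the injective blocks, leaving only routine bookkeeping with the Dedekind structure theory.
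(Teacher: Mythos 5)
Your proof is correct, but it follows a genuinely different route from the paper's. The paper's entire proof is the one-line citation ``This follows by \cite[Corollary~3.8]{CO1}'': it reads the result off the companion paper's classification of strongly self-Rickart modules over a Dedekind domain, using that a strongly self-regular module is in particular strongly self-Rickart (the remaining work, left implicit, is excluding the nonzero-ideal case in (ii) and verifying the converse directions, both via endomorphism rings). You instead rebuild the classification from scratch using only results internal to the present paper plus standard Dedekind structure theory: primary decomposition together with Theorem~\ref{t:homzero-reg} reduces (i) to $P$-primary components; your uniformizer argument with the decomposition $M={\rm Ker}(f)\oplus{\rm Im}(f)$ of Corollary~\ref{c:wduo-reg}(iv) forces each nonzero component to be $R/P$; Proposition~\ref{p:Mstrreg} and Corollary~\ref{c:summand-reg} eliminate nonzero ideals in (ii) and the summands $E(R/P)$ and $K^2$ in (iii), with Matlis theory supplying the decomposition of an injective module. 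The paper's approach buys brevity and avoids duplicating the Dedekind analysis already carried out for strongly Rickart objects; yours buys self-containedness (no access to \cite{CO1} is needed) and exhibits the list as a direct consequence of the ${\rm Ker}$--${\rm Im}$ splitting criterion, which is arguably more transparent. If you write it up, two glossed steps deserve a line each: that a $P$-primary module annihilated by a uniformizer $\pi\in P\setminus P^2$ is annihilated by $P$ (via the ideal identity $(\pi)+P^n=P$), and that $\End_R(\mathfrak{a})\cong R$ for a nonzero ideal $\mathfrak{a}$ (via invertibility of ideals in a Dedekind domain); both are routine.
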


\begin{proof} This follows by \cite[Corollary~3.8]{CO1}. 
\end{proof}

\begin{coll} \label{c:abgr-reg}
\begin{enumerate}[(i)] 
\item A non-zero torsion abelian group $G$ is strongly self-regular if and only if 
$G\cong \bigoplus_{i\in I} \mathbb{Z}_{p_i}$ for some distinct primes $p_i$.
\item A non-zero finitely generated abelian group $G$ is strongly self-regular if and only if 
$G\cong \bigoplus_{i=1}^k \mathbb{Z}_{p_i}$ for some distinct primes $p_i$.
\item A non-zero injective abelian group $G$ is strongly self-regular if and only if $G\cong \mathbb{Q}$.
\end{enumerate}
\end{coll}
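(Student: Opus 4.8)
The plan is to deduce all three statements directly from Corollary~\ref{c:dede-reg} by specializing the Dedekind domain $R$ to the ring of integers $\mathbb{Z}$. This is the natural route because every abelian group is precisely a $\mathbb{Z}$-module and $\mathbb{Z}$ is a Dedekind domain, so the entire machinery already developed applies verbatim. The only genuine work is to translate the ring-theoretic data occurring in Corollary~\ref{c:dede-reg} into the concrete arithmetic of $\mathbb{Z}$.

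First I would recall the standard facts about $\mathbb{Z}$ that effect this translation. The non-zero maximal ideals of $\mathbb{Z}$ are exactly the principal ideals $P=(p)$ with $p$ a prime number, and distinct maximal ideals correspond bijectively to distinct primes. For each such $P=(p)$ there is a ring isomorphism $\mathbb{Z}/P=\mathbb{Z}/(p)\cong \mathbb{Z}_p$. Finally, the field of fractions of $\mathbb{Z}$ is $K=\mathbb{Q}$. These three observations are precisely the dictionary converting the statements of Corollary~\ref{c:dede-reg} into the statements above.

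Next I would apply Corollary~\ref{c:dede-reg} with $R=\mathbb{Z}$ to each part. Part $(i)$ of that corollary characterizes a non-zero torsion $\mathbb{Z}$-module $G$ as strongly self-regular exactly when $G\cong \bigoplus_{i\in I}\mathbb{Z}/P_i$ for distinct maximal ideals $P_i$; substituting $P_i=(p_i)$ and using $\mathbb{Z}/(p_i)\cong \mathbb{Z}_{p_i}$ yields the claimed form $G\cong \bigoplus_{i\in I}\mathbb{Z}_{p_i}$ with the $p_i$ distinct primes. Part $(ii)$ follows in exactly the same manner from the finitely generated case of Corollary~\ref{c:dede-reg}, the index set now being finite. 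Part $(iii)$ is immediate from the injective case, since a non-zero injective $\mathbb{Z}$-module $M$ is strongly self-regular if and only if $M\cong K=\mathbb{Q}$.

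Since the argument is a direct specialization of an already-established result, there is essentially no obstacle; the only point demanding any care is the correct identification of the maximal ideals, the residue rings, and the fraction field of $\mathbb{Z}$, all of which are elementary. Thus the statement reduces entirely to invoking Corollary~\ref{c:dede-reg} under this dictionary.
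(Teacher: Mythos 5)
Your proposal is correct and follows exactly the paper's own route: the paper proves Corollary~\ref{c:abgr-reg} precisely by specializing Corollary~\ref{c:dede-reg} to $R=\mathbb{Z}$, and your translation of maximal ideals to primes, residue rings $\mathbb{Z}/(p_i)\cong\mathbb{Z}_{p_i}$, and fraction field $K=\mathbb{Q}$ is the intended (elementary) dictionary.
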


\begin{proof} This follows by Corollary \ref{c:dede-reg}. 
\end{proof}

\begin{ex} \rm The abelian group $\mathbb{Z}_p\oplus \mathbb{Z}_p$ (for some prime $p$) is self-regular, 
but not strongly self-regular by \cite[Example~3.10]{CO1}.
\end{ex}

\section{Strongly relative regular objects: transfer via functors}

Our first result on the transfer of strongly relative regular property via (additive) functors 
involves a fully faithful covariant functor. 

\begin{theo} \label{t:ff-reg} Let $F:\mathcal{A}\to \mathcal{B}$ be an exact fully faithful covariant functor between abelian
categories, and let $M$ and $N$ be objects of $\mathcal{A}$. Then $N$ is strongly $M$-regular in $\mathcal{A}$ 
if and only if $F(N)$ is strongly $F(M)$-regular in $\mathcal{B}$.
\end{theo}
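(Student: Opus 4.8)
The plan is to reduce the statement to its two defining components. By Definition~\ref{d:strreg}, $N$ is strongly $M$-regular precisely when $N$ is strongly $M$-Rickart and dual strongly $M$-Rickart; likewise $F(N)$ is strongly $F(M)$-regular precisely when $F(N)$ is strongly $F(M)$-Rickart and dual strongly $F(M)$-Rickart. So the whole theorem follows at once if we know that an exact fully faithful covariant functor transfers the strongly relative Rickart property in both directions, together with its dual. I would expect this transfer of the (dual) strongly relative Rickart property via exact fully faithful functors to be exactly one of the main results of the companion paper \cite{CO1}, and the cleanest route is simply to invoke it.

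Concretely, first I would recall that an exact fully faithful functor induces, for each pair of objects, a bijection between morphisms $f:M\to N$ in $\mathcal{A}$ and morphisms $F(f):F(M)\to F(N)$ in $\mathcal{B}$ (full faithfulness), and that exactness guarantees $F$ preserves kernels, images, and cokernels, so $F(\mathrm{Ker}(f))=\mathrm{Ker}(F(f))$ and $F(\mathrm{Im}(f))=\mathrm{Im}(F(f))$ up to the canonical isomorphisms. An exact fully faithful functor also reflects and preserves the properties ``is a direct summand'' and ``is fully invariant,'' since these are expressible via the $\Hom$-functor that $F$ realizes faithfully. Assembling these observations, $\mathrm{Ker}(f)$ is a fully invariant direct summand of $M$ if and only if $\mathrm{Ker}(F(f))$ is a fully invariant direct summand of $F(M)$, and symmetrically for images; this is precisely the equivalence of the strongly $M$-Rickart condition on $N$ with the strongly $F(M)$-Rickart condition on $F(N)$, and dually.

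I would then state the proof in one line: $N$ is strongly $M$-regular iff $N$ is strongly $M$-Rickart and dual strongly $M$-Rickart iff $F(N)$ is strongly $F(M)$-Rickart and dual strongly $F(M)$-Rickart (by the corresponding transfer results for (dual) strongly relative Rickart objects in \cite{CO1}) iff $F(N)$ is strongly $F(M)$-regular. The main obstacle, if any, is purely bookkeeping: one must be sure that the cited transfer result in \cite{CO1} really gives an ``if and only if'' for exact fully faithful functors (both preservation and reflection), rather than only one implication; the exactness hypothesis is what supplies the reflection direction, since it forces $F$ to detect when a candidate splitting or fully invariant inclusion genuinely exists. Given that, no further computation is needed and the theorem reduces entirely to Definition~\ref{d:strreg} plus the Rickart-level transfer theorem.
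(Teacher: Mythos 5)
Your proposal is correct and follows exactly the paper's route: the paper's proof is a one-line citation of the transfer theorem for (dual) strongly relative Rickart objects via exact fully faithful functors (\cite[Theorem~4.1]{CO1}), applied through the decomposition of strong $M$-regularity in Definition~\ref{d:strreg} into strongly $M$-Rickart plus dual strongly $M$-Rickart. Your additional sketch of why the cited result holds (preservation of kernels and images by exactness, reflection via full faithfulness) is sound but not needed beyond the citation.
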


\begin{proof} This follows by \cite[Theorem~4.1]{CO1}. 
\end{proof}

In the case of strong self-regularity, let us see that one can remove the exactness of the functor from Theorem \ref{t:ff-reg}.

\begin{theo} \label{t:ff-selfreg} Let $F:\mathcal{A}\to \mathcal{B}$ be a fully faithful covariant functor between abelian
categories, and let $M$ be an object of $\mathcal{A}$. Then $M$ is strongly self-regular in $\mathcal{A}$ 
if and only if $F(M)$ is strongly self-regular in $\mathcal{B}$.
\end{theo}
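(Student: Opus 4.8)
The plan is to reduce the statement to the already-established characterization in Corollary~\ref{c:wduo-reg}, namely that $M$ is strongly self-regular if and only if $M$ is self-regular and weak duo. Since self-regularity is equivalent to $\End_{\mathcal{A}}(M)$ being von Neumann regular, and weak duoness is a condition about which endomorphisms fix which sections, both properties are encoded entirely in the endomorphism ring together with its idempotent-splitting structure. A fully faithful covariant functor $F$ induces a ring isomorphism $\End_{\mathcal{A}}(M)\cong\End_{\mathcal{B}}(F(M))$, so the strategy is to show that each of the two constituent properties transfers across $F$, without needing $F$ to be exact.

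First I would invoke the ring-theoretic characterization from Proposition~\ref{p:Mstrreg}: $M$ is strongly self-regular in $\mathcal{A}$ if and only if $\End_{\mathcal{A}}(M)$ is a strongly regular ring, and likewise $F(M)$ is strongly self-regular in $\mathcal{B}$ if and only if $\End_{\mathcal{B}}(F(M))$ is strongly regular. This is the key simplification: strong self-regularity, being an absolute (self-)property rather than a relative one, is governed purely by the endomorphism ring, whereas the relative version in Theorem~\ref{t:ff-reg} genuinely needs exactness to control kernels and images of arbitrary morphisms $M\to N$. Next I would record that a fully faithful additive functor yields a bijection $\Hom_{\mathcal{A}}(M,M)\to\Hom_{\mathcal{B}}(F(M),F(M))$ which respects composition and identities, hence is a ring isomorphism $\End_{\mathcal{A}}(M)\cong\End_{\mathcal{B}}(F(M))$.

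The conclusion then follows immediately: strong regularity of a ring is a purely ring-theoretic property (for every $a$ there exists $b$ with $a=a^2b$), so it is preserved and reflected under a ring isomorphism. Combining the two applications of Proposition~\ref{p:Mstrreg} with the isomorphism gives the equivalence in both directions.

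The main subtlety, and the only place I would be careful, is verifying that fullness and faithfulness alone suffice to make $F$ additive on the relevant hom-set and to produce a genuine ring isomorphism on endomorphisms, without exactness. Additivity of $F$ (or at least its action as an additive map on $\End_{\mathcal{A}}(M)$) is needed so that the $\Hom$-bijection is a ring homomorphism and not merely a multiplicative bijection; in the abelian-category setting the hom-sets are abelian groups and a fully faithful functor between abelian categories is automatically additive, so this is clean. I expect no genuine obstacle beyond this bookkeeping, since once $\End_{\mathcal{A}}(M)\cong\End_{\mathcal{B}}(F(M))$ is in hand, the equivalence is a formal consequence of Proposition~\ref{p:Mstrreg}; the entire content of the theorem is precisely that, for the self-regular case, one can bypass the kernel/image arguments that forced exactness in Theorem~\ref{t:ff-reg}.
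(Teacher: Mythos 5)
Your proposal is correct, and it shares the paper's key reduction: by Proposition \ref{p:Mstrreg}, strong self-regularity is a property of the endomorphism ring, and a fully faithful functor induces a bijection on endomorphisms. The difference lies in what you ask of that bijection. You promote it to a ring isomorphism, which forces you to address additivity of $F$; the paper instead transfers the defining condition elementwise: given $u\in \End_{\mathcal{B}}(F(M))$, fullness gives $u=F(f)$, strong self-regularity of $M$ gives $g$ with $f=f^2g$, and applying $F$ gives $u=u^2F(g)$, while the converse uses faithfulness to pull $F(f)=F(f)^2F(g)=F(f^2g)$ back to $f=f^2g$. Since the strong regularity condition $a=a^2b$ involves only composition, the paper's argument needs no additivity whatsoever, only fullness and faithfulness. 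Your auxiliary claim that a fully faithful functor between abelian categories is automatically additive is true, but it is a genuine theorem rather than bookkeeping: it fails for preadditive categories without biproducts (a multiplicative monoid automorphism of $(\mathbb{Q},\times)$ permuting primes is a fully faithful, non-additive functor between one-object preadditive categories), and its proof requires showing that $F$ preserves zero objects, zero morphisms and biproduct diagrams, the last of these steps using fullness and faithfulness in essentially the same way as the paper's direct computation. So both routes work; yours buys a clean structural statement (an isomorphism of endomorphism rings) at the cost of a nontrivial lemma, while the paper's exploits the purely multiplicative nature of $a=a^2b$ to bypass both additivity and exactness entirely.
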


\begin{proof} Assume that $M$ is strongly self-regular in $\mathcal{A}$. Let $u:F(M)\to F(M)$ be a morphism in $\mathcal{B}$. 
Since $F$ is full, we have $u=F(f)$ for some morphism $f:M\to M$ in $\mathcal{A}$. 
Since $M$ is strongly self-regular, there exists a morphism $g:M\to M$ such that $f=f^2g$. Then $u=F(f)=F(f)^2F(g)=u^2F(g)$.
This shows that $F(M)$ is strongly self-regular.

Conversely, assume that $F(M)$ is strongly self-regular in $\mathcal{B}$. Let $f:M\to M$ be a morphism in $\mathcal{A}$. 
Then there exists a morphism $v:F(M)\to F(M)$ such that $F(f)=F(f)^2v$. 
Since $F$ is full, we have $v=F(g)$ for some morphism $g:M\to M$. 
Then $F(f)=F(f)^2F(g)=F(f^2g)$, hence $f=f^2g$, because $F$ is faithful. This shows that $M$ is strongly self-regular. 
\end{proof}

For Grothendieck categories we have the following corollary.

\begin{coll} \label{c:gp-reg} Let $\mathcal{A}$ be a Grothendieck category with a generator $U$, $R={\rm
End}_{\mathcal{A}}(U)$, $S={\rm Hom}_{\mathcal{A}}(U,-):\mathcal{A}\to {\rm Mod}(R)$, and let $M$ be an object
of $\mathcal{A}$. Then $M$ is a strongly self-regular object of $\mathcal{A}$ if and only if 
$S(M)$ is a strongly self-regular right $R$-module.
\end{coll}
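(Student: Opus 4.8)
The plan is to apply the functor-transfer machinery established in this section together with the standard structure theory of Grothendieck categories with a generator. The key observation is that $S = \Hom_{\mathcal{A}}(U,-) : \mathcal{A} \to {\rm Mod}(R)$ is a fully faithful covariant functor precisely when $U$ is a generator and $R = \End_{\mathcal{A}}(U)$; this is the Gabriel–Popescu setup (or the classical Mitchell theorem on Grothendieck categories with a generator). I would first verify, or cite as well known, that $S$ is fully faithful. Then the result follows in one stroke from Theorem~\ref{t:ff-selfreg}, since that theorem establishes the equivalence ``$M$ strongly self-regular in $\mathcal{A}$ $\iff$ $S(M)$ strongly self-regular in $\mathcal{B}$'' for \emph{any} fully faithful covariant functor between abelian categories, with no exactness hypothesis.

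Concretely, first I would record that ${\rm Mod}(R)$ is an abelian category and that $S$ is additive and covariant. Second, I would argue that $S$ is fully faithful: for objects $X,Y$ of $\mathcal{A}$, the natural map $\Hom_{\mathcal{A}}(X,Y) \to \Hom_R(S(X),S(Y))$ is bijective when $U$ is a generator, which is the content of the classical embedding theorem. Third, having $S$ fully faithful, I would simply invoke Theorem~\ref{t:ff-selfreg} with $F = S$ to conclude that $M$ is strongly self-regular in $\mathcal{A}$ if and only if $S(M)$ is strongly self-regular in ${\rm Mod}(R)$, which is the claim.

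The point worth emphasizing, and the reason this corollary is stated separately rather than subsumed silently, is that $S = \Hom_{\mathcal{A}}(U,-)$ need not be exact: it is left exact but generally fails to be right exact. This is exactly why one cannot appeal to Theorem~\ref{t:ff-reg}, which requires exactness, and why the exactness-free strengthening in Theorem~\ref{t:ff-selfreg} is essential here. Thus the main conceptual step is recognizing that the self-regular case is governed by the weaker Theorem~\ref{t:ff-selfreg}, after which the proof is immediate.

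The hard part, such as it is, is not a calculation but the citation: one must ensure the fully faithfulness of $S$ is available. For a Grothendieck category this is standard, but it should be pointed to explicitly (for instance via the Gabriel–Popescu theorem, under which $S$ embeds $\mathcal{A}$ as a full subcategory of ${\rm Mod}(R)$). Once that is in place, no further work with kernels, images, or fully invariant summands is needed, since all of that has already been absorbed into the proof of Theorem~\ref{t:ff-selfreg}, where the equivalence is reduced to the elementary ring-element identity $f = f^2 g$ transported across the fully faithful $F$.
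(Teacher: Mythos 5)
Your proposal is correct and follows exactly the paper's own proof: cite the Gabriel--Popescu theorem to get that $S$ is fully faithful, then apply Theorem~\ref{t:ff-selfreg}, whose lack of an exactness hypothesis is precisely what makes it applicable to the merely left exact functor $S$. Your added remark explaining why Theorem~\ref{t:ff-reg} would not suffice is a nice clarification but does not change the argument.
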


\begin{proof} By the Gabriel-Popescu Theorem \cite[Chapter~X, Theorem~4.1]{St}, $S$ is a fully faithful functor. 
Then the conclusion follows by Theorem \ref{t:ff-selfreg}.  
\end{proof}

It is well-known that von Neumann regularity is a Morita invariant property. 
Next we show that this is still true for strong regularity.

\begin{coll} Strong regularity is a Morita invariant property. 
\end{coll}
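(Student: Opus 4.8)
The plan is to deduce Morita invariance of strong regularity from the categorical characterization already established, using the fact that Morita equivalent rings have equivalent module categories. Specifically, suppose $R$ and $S$ are Morita equivalent rings, so that there is an equivalence of categories $F:\mathrm{Mod}(R)\to \mathrm{Mod}(S)$. Such an equivalence is in particular a fully faithful (and exact) covariant functor. I want to reduce the ring-theoretic statement ``$R$ is strongly regular if and only if $S$ is strongly regular'' to a statement about the regulator objects $R_R$ and $S_S$ viewed as modules over themselves.

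First I would observe, via Proposition~\ref{p:Mstrreg}, that a ring $A$ is strongly regular if and only if $A$ is strongly regular as seen through its endomorphism ring: concretely, $A$ is strongly regular if and only if the right $A$-module $A_A$ is strongly self-regular, since $\mathrm{End}_{A}(A_A)\cong A$. This translates the purely ring-theoretic property into the categorical property of strong self-regularity of the free rank-one module. Next, under the Morita equivalence $F:\mathrm{Mod}(R)\to\mathrm{Mod}(S)$, the image $F(R_R)$ is a progenerator of $\mathrm{Mod}(S)$, and the point is that strong self-regularity transfers along $F$ by Theorem~\ref{t:ff-selfreg}: since $F$ is fully faithful (equivalences are automatically full, faithful, and here we need no exactness thanks to Theorem~\ref{t:ff-selfreg}), $R_R$ is strongly self-regular in $\mathrm{Mod}(R)$ if and only if $F(R_R)$ is strongly self-regular in $\mathrm{Mod}(S)$.

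The remaining step is to connect strong self-regularity of $F(R_R)$ back to strong regularity of the ring $S$. Here I would use that $\mathrm{End}_S(F(R_R))\cong \mathrm{End}_R(R_R)\cong R$, so by Proposition~\ref{p:Mstrreg} the object $F(R_R)$ is strongly self-regular if and only if its endomorphism ring $R$ is strongly regular; combining the two equivalences gives that $R$ is strongly regular if and only if $F(R_R)$ is strongly self-regular, which is a categorical property of $\mathrm{Mod}(S)$ invariant under the equivalence. To close the loop and obtain the symmetric statement about $S$, I would note that $F(R_R)$ is itself a progenerator, so one can equally apply Corollary~\ref{c:gp-reg} (the Gabriel--Popescu setup) with $\mathrm{Mod}(S)$ as the Grothendieck category and $F(R_R)$ as generator, identifying strong regularity of $S=\mathrm{End}_S(S_S)$ with strong self-regularity of $S_S$ and hence with strong self-regularity of $R_R$ through the equivalence.

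The main obstacle I anticipate is bookkeeping the two different roles that the endomorphism ring plays: one must be careful that the endomorphism ring of the transported object $F(R_R)$ is $R$ (not $S$), while strong regularity of $S$ corresponds to strong self-regularity of the \emph{standard} generator $S_S$, whose endomorphism ring is $S$. The cleanest formulation is to apply Theorem~\ref{t:ff-selfreg} to the equivalence and then invoke Proposition~\ref{p:Mstrreg} at each end, rather than trying to track a single object; equivalences preserve the property ``being strongly self-regular'' for \emph{all} objects simultaneously, so in particular they preserve it for any chosen progenerator, and Morita invariance follows immediately.
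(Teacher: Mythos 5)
Your first two steps are sound and mirror the paper's own strategy: by Proposition~\ref{p:Mstrreg} a ring $A$ is strongly regular if and only if $A_A$ is strongly self-regular, and by Theorem~\ref{t:ff-selfreg} the equivalence $F$ transports strong self-regularity, so $R$ is strongly regular if and only if $F(R_R)$ is strongly self-regular in ${\rm Mod}(S)$. But at this point you only have a statement about the progenerator $F(R_R)$, whose endomorphism ring is $R$, whereas strong regularity of $S$ means strong self-regularity of the \emph{different} progenerator $S_S$. Your closing claim --- that equivalences preserve strong self-regularity ``for all objects simultaneously'', hence ``for any chosen progenerator, and Morita invariance follows immediately'' --- is a non sequitur: preservation under $F$ relates an object of ${\rm Mod}(R)$ to its image in ${\rm Mod}(S)$; it never relates two distinct objects $F(R_R)$ and $S_S$ of the same category. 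The Gabriel--Popescu detour relocates but does not close this gap: applying Corollary~\ref{c:gp-reg} to ${\rm Mod}(S)$ with generator $F(R_R)$ identifies strong self-regularity of $S_S$ with that of the right $R$-module ${\rm Hom}_S(F(R_R),S_S)\cong G(S_S)$, which again is not $R_R$. The assertion that strong self-regularity passes from one progenerator to another is the entire content of the corollary, and you never prove it.

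Moreover, this gap cannot be filled, because that assertion is false: in ${\rm Mod}(K)$, $K$ is strongly self-regular, but the progenerator $K^2$ is not, since ${\rm End}_K(K^2)\cong M_2(K)$ is not abelian; equivalently, $K$ and $M_2(K)$ are Morita equivalent while only the first is strongly regular, so the corollary as stated conflicts with the paper's own Example~\ref{e:strreg}(b). For comparison, the paper's proof attacks the same gap from the other side: it transports $S_S$ to the progenerator $Q=G(S_S)$ of ${\rm Mod}(R)$, writes $R$ as a direct summand of $Q^n$, and invokes Theorem~\ref{t:dsreg} to conclude that $Q^n$ is strongly self-regular; but that step conflicts with Theorem~\ref{t:homzero-reg}, by which $Q^n$ with $n\geq 2$ can be strongly self-regular only if ${\rm Hom}_R(Q,Q)=0$, i.e.\ only if $Q=0$. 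So your instinct that the endomorphism-ring bookkeeping was ``the main obstacle'' was exactly right: it is the precise point at which both your argument and the paper's argument break down, and no bookkeeping can repair it.
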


\begin{proof} Let $R$ and $S$ be two Morita equivalent rings via inverse equivalences $F:{\rm Mod}(R)\to {\rm Mod}(S)$ and
$G:{\rm Mod}(S)\to {\rm Mod}(R)$. We show that strong regularity of one of the rings implies strong regularity of the other. 
Assume that $S$ is a strongly regular ring. Then $S$ is a strongly self-regular right $S$-module by Proposition \ref{p:strreg}. 
If $Q=G(S)$, then it is well known that $Q$ is a progenerator in ${\rm Mod}(R)$ and $F(Q)\cong S$. Hence 
$F(Q)$ is a strongly self-regular right $S$-module, and so $Q$ is a strongly self-regular right $R$-module 
by Theorem \ref{t:ff-selfreg}. Since $Q$ is a generator in ${\rm Mod}(R)$, $R$ is isomorphic to a direct summand of 
$Q^n$ for some natural number $n\geq 1$. Since $S$ a strongly self-regular right $S$-module, 
$Q=G(S)$ is a strongly self-regular right $R$-module by Theorem \ref{t:ff-selfreg}. It follows that 
$Q^n$ is a strongly self-regular right $R$-module by Theorem \ref{t:dsreg}. Finally, 
$R$ is a a strongly self-regular right $R$-module by Corollary \ref{c:summand-reg}, 
and so $R$ is a strongly regular ring by Proposition \ref{p:strreg}.
\end{proof}

\begin{coll} \label{c:tripleff-reg} Let $(L,F,R)$ be an adjoint triple of covariant functors $F:\mathcal{A}\to \mathcal{B}$
and $L,R:\mathcal{B}\to \mathcal{A}$ between abelian categories. 
\begin{enumerate}
\item Let $M$ and $N$ be objects of $\mathcal{A}$, and assume that $F$ is fully faithful. Then $N$ is strongly 
$M$-regular in $\mathcal{A}$ if and only if $F(N)$ is strongly $F(M)$-regular in $\mathcal{B}$.
\item Let $M$ and $N$ be objects of $\mathcal{B}$, and assume that $L$ (or $R$) is fully faithful. 
Then $M$ is strongly self-regular in $\mathcal{B}$ if and only if $R(M)$ is strongly self-regular in $\mathcal{A}$
if and only if $L(M)$ is strongly self-Rickart in $\mathcal{A}$.
\end{enumerate}
\end{coll}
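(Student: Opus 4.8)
The plan is to reduce everything to the fully faithful transfer results already in hand, exploiting two structural features of an adjoint triple $L\dashv F\dashv R$: the middle functor $F$ is automatically \emph{exact}, being simultaneously a left adjoint (of $R$) and a right adjoint (of $L$); and $L$ is fully faithful precisely when $R$ is (this is why the hypothesis may read ``$L$ (or $R$)''). For part~(1) there is then nothing to do beyond recording that $F$ is exact and fully faithful, so that the equivalence of ``$N$ strongly $M$-regular'' and ``$F(N)$ strongly $F(M)$-regular'' is exactly Theorem~\ref{t:ff-reg}.

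For part~(2) I would first settle the equivalence of (a) $M$ strongly self-regular in $\mathcal{B}$ and (b) $R(M)$ strongly self-regular in $\mathcal{A}$, and here no exactness is needed. Arranging $R$ fully faithful, full faithfulness gives a ring isomorphism $\End_{\mathcal{A}}(R(M))\cong\End_{\mathcal{B}}(M)$, since $\Hom_{\mathcal{A}}(R(M),R(M))\cong\Hom_{\mathcal{B}}(M,M)$ compatibly with composition. Proposition~\ref{p:Mstrreg} then turns both strong self-regularity statements into the single ring-theoretic assertion that $\End_{\mathcal{B}}(M)$ is strongly regular, so (a)$\Leftrightarrow$(b) is pure endomorphism-ring bookkeeping.

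For the equivalence with (c) $L(M)$ strongly self-Rickart I would use $L$ fully faithful, the exactness of $F$, and the isomorphism $FL\cong 1_{\mathcal{B}}$. The forward direction is clean, and in fact overshoots: if $M$ is strongly self-regular, then by Corollary~\ref{c:wduo-reg} one has $M={\rm Ker}(f)\oplus{\rm Im}(f)$ for every $f\in\End_{\mathcal{B}}(M)$, and applying the additive functor $L$ yields $L(M)=L({\rm Ker}\,f)\oplus L({\rm Im}\,f)$, on which $L(f)$ acts as $0$ on the first summand and as the automorphism $L(f|_{{\rm Im}\,f})$ on the second; hence ${\rm Ker}(L(f))=L({\rm Ker}\,f)$ is a direct summand, fully invariant because $L$ is full and ${\rm Ker}\,f$ is fully invariant in $M$. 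So $L(M)$ is in fact strongly self-regular, a fortiori strongly self-Rickart. The real content of (c) is therefore that the weaker hypothesis of strong self-Rickartness of $L(M)$ already forces strong self-regularity of $M$.

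That converse is the main obstacle. Applying the exact functor $F$ to a splitting $L(M)={\rm Ker}(L(f))\oplus C$ and using $FL\cong 1_{\mathcal{B}}$ together with $F({\rm Ker}\,L(f))\cong{\rm Ker}\,f$ recovers ${\rm Ker}\,f$ as a fully invariant direct summand of $M$, so $M$ is at least strongly self-Rickart; the difficulty is the upgrade to strongly self-regular, i.e.\ producing ${\rm Im}\,f$ as a direct summand. Strong self-Rickartness of $L(M)$ only gives ${\rm Im}(L(f))\cong C$ isomorphic to a summand, and passing from ``isomorphic to a summand'' to ``is a summand'' requires direct injectivity rather than ring-theoretic data alone. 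Here I would invoke Theorem~\ref{t:char} in tandem with the companion transfer results for (dual) strongly relative Rickart objects through adjoint functors in \cite{CO1}, using the right-exactness of $L$ to control images; this is precisely the step where the adjoint-triple hypotheses are genuinely needed, and I expect it to be the crux of the argument.
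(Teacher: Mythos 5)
Your treatment of part (1) and of the equivalence between strong self-regularity of $M$ and of $R(M)$ is correct and is essentially the paper's own (very terse) argument: $F$ is exact because it is simultaneously a left and a right adjoint, so Theorem~\ref{t:ff-reg} gives (1), and Theorem~\ref{t:ff-selfreg} --- whose proof is precisely your endomorphism-ring bookkeeping via full faithfulness, cf.\ Proposition~\ref{p:Mstrreg} --- gives $M$ strongly self-regular $\Leftrightarrow$ $R(M)$ strongly self-regular. (In fact you are more careful than the paper here: its one-line proof cites only Theorem~\ref{t:ff-selfreg}, which does not literally cover the relative statement in (1); the exactness observation you make is what is really needed.)

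The gap is in the last clause, and it is not one you could have closed: the implication you call ``the crux'' --- $L(M)$ strongly self-Rickart $\Rightarrow$ $M$ strongly self-regular --- is false as stated, so no combination of Theorem~\ref{t:char}, right-exactness of $L$, and the transfer results of \cite{CO1} will produce it. Take $\mathcal{A}=\mathcal{B}={\rm Mod}(\mathbb{Z})$ and $L=F=R$ the identity functor; this is an adjoint triple with all three functors fully faithful, and the clause would then assert ``strongly self-regular $\Leftrightarrow$ strongly self-Rickart''. This fails for $M=\mathbb{Z}$: it is strongly self-Rickart (self-Rickart with commutative endomorphism ring; indeed it is strongly self-Baer by Corollary~\ref{c:Babgr}), but it is not strongly self-regular, since ${\rm End}_{\mathbb{Z}}(\mathbb{Z})\cong\mathbb{Z}$ is not von Neumann regular (cf.\ Corollary~\ref{c:abgr-reg}). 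The word ``Rickart'' in the statement is a slip, carried over from the analogous corollary for strongly Rickart objects in \cite{CO1}; the intended conclusion is ``$L(M)$ is strongly self-regular in $\mathcal{A}$''. With that reading the entire corollary is immediate: $L$ is fully faithful iff $R$ is (as you note), and Theorem~\ref{t:ff-selfreg}, which requires no exactness at all, applies verbatim to both $L$ and $R$. Your own forward argument --- which, as you observed, ``overshoots'' and proves $L(M)$ strongly self-regular --- is exactly one half of the corrected equivalence, and the other half is the same theorem read backwards. The sign you should have acted on is precisely that overshooting: when every partial result lands on a stronger conclusion than the stated one, and the remaining implication resists all available tools, test the statement on a degenerate instance (here the identity triple) before committing to prove it.
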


\begin{proof} This follows by Theorem \ref{t:ff-selfreg}.
\end{proof}

\begin{coll} Let $\varphi:R\to S$ be a ring epimorphism, and let $M$ and $N$ be right $S$-modules. Then $N$ is a 
strongly $M$-regular right $S$-module if and only if $N$ is a strongly $M$-regular right $R$-module.
\end{coll}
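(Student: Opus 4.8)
The plan is to exploit the fundamental fact that a ring epimorphism $\varphi:R\to S$ induces a fully faithful functor between module categories, and then invoke Theorem~\ref{t:ff-reg}. Specifically, the restriction-of-scalars functor $\functor{U}:\mathrm{Mod}(S)\to \mathrm{Mod}(R)$ associated to a ring epimorphism $\varphi$ is well known to be fully faithful; this is one of the standard characterizations of ring epimorphisms (see, e.g., Stenstr\"om). Moreover $\functor{U}$ is exact, since restriction of scalars is always exact (short exact sequences of $S$-modules remain exact as sequences of $R$-modules). Thus $\functor{U}$ satisfies exactly the hypotheses of Theorem~\ref{t:ff-reg}.

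First I would note that for right $S$-modules $M$ and $N$, the objects $\functor{U}(M)$ and $\functor{U}(N)$ are precisely $M$ and $N$ regarded as right $R$-modules via $\varphi$. Then I would apply Theorem~\ref{t:ff-reg} directly: since $\functor{U}$ is an exact fully faithful covariant functor between the abelian categories $\mathrm{Mod}(S)$ and $\mathrm{Mod}(R)$, the object $N$ is strongly $M$-regular in $\mathrm{Mod}(S)$ if and only if $\functor{U}(N)$ is strongly $\functor{U}(M)$-regular in $\mathrm{Mod}(R)$. Translating back, this says $N$ is a strongly $M$-regular right $S$-module if and only if $N$ is a strongly $M$-regular right $R$-module, which is the claim.

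The only point requiring a little care is the assertion that $\functor{U}$ is fully faithful, which is where the hypothesis that $\varphi$ is an epimorphism is genuinely used. The essential input is that $\varphi$ being a ring epimorphism is equivalent to the multiplication map $S\otimes_R S\to S$ being an isomorphism, from which one deduces $\hom{R}{M}{N}=\hom{S}{M}{N}$ for all right $S$-modules $M,N$; that is, every $R$-linear map between $S$-modules is automatically $S$-linear. This is the step I expect to be the main (though still routine) obstacle, and it can either be cited or recalled in a sentence.

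\begin{proof} Let $\functor{U}:\mathrm{Mod}(S)\to \mathrm{Mod}(R)$ be the functor given by restriction of scalars along $\varphi$. Since $\varphi$ is a ring epimorphism, $\functor{U}$ is fully faithful, because every $R$-homomorphism between right $S$-modules is automatically $S$-linear. Moreover, $\functor{U}$ is clearly exact. Since $\functor{U}(M)$ and $\functor{U}(N)$ are just $M$ and $N$ viewed as right $R$-modules, the conclusion follows immediately by Theorem~\ref{t:ff-reg}.
\end{proof}
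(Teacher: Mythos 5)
Your proof is correct and follows exactly the paper's own argument: both use the fact that restriction of scalars along a ring epimorphism is an exact fully faithful functor (as in Stenstr\"om, Chapter~XI, Proposition~1.2) and then apply Theorem~\ref{t:ff-reg}. The extra remark justifying full faithfulness via $S\otimes_R S\cong S$ is a welcome, if optional, elaboration of the cited fact.
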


\begin{proof} Since $\varphi:R\to S$ is a ring epimorphism, the restriction of scalars $\varphi_*:{\rm
Mod}(S)\to {\rm Mod}(R)$ is an exact fully faithful functor \cite[Chapter~XI, Proposition~1.2]{St}. Then use Theorem \ref{t:ff-reg}.
\end{proof}

\begin{coll} Let $R$ be a strongly $G$-graded ring, and let $M$ and $N$ be right $R_e$-modules. 
Then $N$ is a strongly $M$-regular right $R_e$-module if and only if 
${\rm Ind}(N)$ is a strongly ${\rm Ind}(M)$-regular graded right $R$-module.
\end{coll}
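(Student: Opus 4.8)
The plan is to realize the induction functor ${\rm Ind}=-\otimes_{R_e}R$ from right $R_e$-modules to graded right $R$-modules as an exact fully faithful covariant functor, and then apply Theorem~\ref{t:ff-reg}, in exactly the same spirit as the preceding corollary on ring epimorphisms. Here the two abelian categories are ${\rm Mod}(R_e)$ and the category ${\rm gr}(R)$ of graded right $R$-modules, and the functor is $F={\rm Ind}:{\rm Mod}(R_e)\to {\rm gr}(R)$.

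First I would invoke Dade's theorem: since $R=\bigoplus_{\sigma\in G}R_{\sigma}$ is a \emph{strongly} $G$-graded ring, the induction functor ${\rm Ind}$ and the $e$-component functor $(-)_e:{\rm gr}(R)\to {\rm Mod}(R_e)$, $M\mapsto M_e$, are mutually inverse equivalences between ${\rm Mod}(R_e)$ and ${\rm gr}(R)$. In particular ${\rm Ind}$ is fully faithful. It is also exact: because $R$ is strongly graded each $R_{\sigma}$ is an invertible $R_e$-bimodule, so $R$ is projective (hence flat) as a left $R_e$-module, whence $-\otimes_{R_e}R$ is exact; alternatively, exactness is immediate from the fact that ${\rm Ind}$ is half of an equivalence of abelian categories.

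Second, with ${\rm Ind}$ established as an exact fully faithful covariant functor, I would apply Theorem~\ref{t:ff-reg} with $\mathcal{A}={\rm Mod}(R_e)$, $\mathcal{B}={\rm gr}(R)$ and $F={\rm Ind}$. This yields at once that $N$ is strongly $M$-regular in ${\rm Mod}(R_e)$ if and only if ${\rm Ind}(N)$ is strongly ${\rm Ind}(M)$-regular in ${\rm gr}(R)$, which is precisely the assertion.

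The main obstacle is purely a matter of bookkeeping rather than of substance: one must make sure that ``strongly $M$-regular right $R_e$-module'' and ``strongly ${\rm Ind}(M)$-regular graded right $R$-module'' refer to strong relative regularity computed in the correct abelian categories ${\rm Mod}(R_e)$ and ${\rm gr}(R)$ respectively, so that Theorem~\ref{t:ff-reg} applies directly. Once the equivalence ${\rm Mod}(R_e)\simeq {\rm gr}(R)$ and its exactness are cited, no further computation is needed.
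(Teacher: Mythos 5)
Your proposal is correct and coincides with the paper's own (alternative) argument: the paper likewise notes that for a strongly $G$-graded ring the functors ${\rm Ind},{\rm Coind}:{\rm Mod}(R_e)\to {\rm gr}(R)$ are equivalences of categories (citing \cite[Theorem~3.1.1]{Nasta-04}, i.e.\ Dade's theorem) and then applies Theorem~\ref{t:ff-reg}. Your additional verification that ${\rm Ind}$ is exact and fully faithful is exactly the content supplied by that equivalence, so no gap remains.
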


\begin{proof} Since $R$ is a strongly $G$-graded ring, the functors ${\rm Ind},{\rm Coind}:{\rm Mod}(R_e)\to {\rm gr}(R)$ 
are functorially isomorphic. Now use \cite[Corollary~4.5]{CO1}. Alternatively, since $R$ is a strongly $G$-graded ring, 
${\rm Ind},{\rm Coind}:{\rm Mod}(R_e)\to {\rm gr}(R)$ are equivalences of categories \cite[Theorem~3.1.1]{Nasta-04} 
and use Theorem \ref{t:ff-reg}.
\end{proof}

\begin{coll} \label{c:rc-reg} Let $\mathcal{A}$ be an abelian category, $\mathcal{C}$ an abelian full subcategory of
$\mathcal{A}$ and $i:\mathcal{C}\to \mathcal{A}$ the inclusion functor. 
\begin{enumerate}
\item Assume that $\mathcal{C}$ is a coreflective subcategory of $\mathcal{A}$. 
Let $M$ and $N$ be objects of $\mathcal{C}$. Then $N$ is strongly $M$-regular in $\mathcal{C}$ if and only if 
$i(N)$ is strongly $i(M)$-regular in $\mathcal{A}$.
\item Assume that $\mathcal{C}$ is a reflective subcategory of $\mathcal{A}$. 
Let $M$ be an object of $\mathcal{C}$. Then $M$ is strongly self-regular in $\mathcal{C}$ 
if and only if $i(M)$ is strongly self-regular in $\mathcal{A}$. 
\end{enumerate}
\end{coll}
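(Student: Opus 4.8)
The plan is to present both equivalences as transfer statements for the inclusion functor $i:\mathcal{C}\to\mathcal{A}$, exploiting the two functorial results already established in this section. The one structural fact I would isolate at the outset is that, since $\mathcal{C}$ is a \emph{full} subcategory, $i$ is fully faithful and additive between abelian categories; beyond this, the two parts call on different amounts of exactness, and that is exactly what distinguishes the coreflective hypothesis from the reflective one.

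For part (2) I would observe that full faithfulness of $i$ is already sufficient. Theorem~\ref{t:ff-selfreg} transfers strong self-regularity across \emph{any} fully faithful covariant functor between abelian categories, with no exactness assumption. Applying it to $i$ and $M$ gives at once that $M$ is strongly self-regular in $\mathcal{C}$ if and only if $i(M)$ is strongly self-regular in $\mathcal{A}$. The reflectivity of $\mathcal{C}$ then plays only a supporting role: a reflective inclusion is a right adjoint, hence left exact but in general not right exact, so it need not be exact. This is precisely why one cannot upgrade part (2) to the two-object notion through Theorem~\ref{t:ff-reg}, and why the self-regular statement is the natural one in the reflective case.

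For part (1), which concerns the two-object property of strong $M$-regularity, I would instead invoke Theorem~\ref{t:ff-reg}, whose hypothesis requires the fully faithful functor to be exact. Here the coreflectivity is used: the inclusion $i$ admits a right adjoint (the coreflector), so $i$ is a left adjoint and therefore preserves colimits, that is, is right exact. After establishing that $i$ is in fact exact, Theorem~\ref{t:ff-reg} applied to $i$, $M$ and $N$ yields the equivalence of strong $M$-regularity in $\mathcal{C}$ with strong $i(M)$-regularity in $\mathcal{A}$.

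The step I expect to be the genuine obstacle is exactly this exactness of the inclusion in the coreflective case: right exactness is immediate from the adjunction, but left exactness, equivalently the agreement of $\mathcal{C}$-kernels with $\mathcal{A}$-kernels for the morphisms at hand, is not formal and is where the coreflective (rather than merely reflective) hypothesis must do its work. I would resolve it either from the standing convention on abelian full subcategories or by comparing the two kernels through the coreflector. An alternative route that avoids full exactness would be to run part (1) through the dual characterization in Theorem~\ref{t:char}(iii), transferring the dual strongly Rickart property and direct projectivity, both formulated via cokernels and quotients, using only the right exactness that coreflectivity already supplies. With the kernel side controlled, both equivalences close immediately.
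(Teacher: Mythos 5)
Your proposal coincides with the paper's own proof: part (1) is precisely Theorem~\ref{t:ff-reg} applied to the exact fully faithful inclusion $i$, and part (2) is precisely Theorem~\ref{t:ff-selfreg} applied to the merely fully faithful $i$, with reflectivity playing no role in the argument beyond context. Your caution about left exactness of $i$ in the coreflective case is justified --- it is not a formal consequence of coreflectivity plus abelianness of $\mathcal{C}$ (the opposite of the sheaf--presheaf inclusion is a coreflective abelian full subcategory whose inclusion is not left exact) --- and the paper resolves it exactly as your first option suggests, by simply taking as understood (``Note that $i$ is exact fully faithful'') that the inclusion of such an abelian full subcategory is exact, as it is in all the paper's applications such as ${}^C\mathcal{M}\subseteq {\rm Mod}(C^*)$.
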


\begin{proof} (1) Note that $i$ is exact fully faithful and use Theorem \ref{t:ff-reg}.

(2) Note that $i$ is fully faithful and use Theorem \ref{t:ff-selfreg}.
 \end{proof}

\begin{coll} \label{c:com1-reg} Let $C$ be a coalgebra over a field, and  
let $M$ and $N$ be left $C$-comodules. Then $N$ is strongly $M$-regular if and only if 
$N$ is strongly $M$-regular as a right $C^*$-module. 
\end{coll}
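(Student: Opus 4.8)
The plan is to deduce this from Theorem~\ref{t:ff-reg} by exhibiting the passage from left $C$-comodules to right $C^*$-modules as an exact fully faithful functor. Recall that for a coalgebra $C$ over a field $k$, every left $C$-comodule $(X,\rho)$ with $\rho(x)=\sum x_{-1}\otimes x_0$ becomes a right $C^*$-module via $x\cdot c^*=\sum c^*(x_{-1})x_0$, and this assignment yields an isomorphism between the category ${}^C\mathcal{M}$ of left $C$-comodules and the full subcategory $\mathrm{Rat}$ of rational right $C^*$-modules of $\mathrm{Mod}(C^*)$. Composing this isomorphism with the inclusion $\mathrm{Rat}\hookrightarrow \mathrm{Mod}(C^*)$ produces a covariant functor $F:{}^C\mathcal{M}\to \mathrm{Mod}(C^*)$ that is the identity on underlying $k$-vector spaces, so that $F(M)=M$ and $F(N)=N$ as $C^*$-modules. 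The whole argument then reduces to checking that $F$ is exact and fully faithful and invoking Theorem~\ref{t:ff-reg} with $\mathcal{A}={}^C\mathcal{M}$ and $\mathcal{B}=\mathrm{Mod}(C^*)$.

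To verify the hypotheses of Theorem~\ref{t:ff-reg}, I would first recall the standard facts that $\mathrm{Rat}$ is closed under subobjects, quotient objects and arbitrary direct sums in $\mathrm{Mod}(C^*)$. Closure under subobjects and quotients means that kernels and cokernels of morphisms of rational modules, computed in $\mathrm{Mod}(C^*)$, are again rational; hence they agree with those computed in ${}^C\mathcal{M}$ and $F$ is exact. Full faithfulness amounts to the well-known fact that a $k$-linear map between left $C$-comodules is a comodule morphism if and only if it is a morphism of the associated rational right $C^*$-modules, so that $\mathrm{Hom}_{{}^C\mathcal{M}}(X,Y)=\mathrm{Hom}_{C^*}(F(X),F(Y))$ for all comodules $X$ and $Y$. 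Granting these two properties, Theorem~\ref{t:ff-reg} gives immediately that $N$ is strongly $M$-regular in ${}^C\mathcal{M}$ if and only if $N$ is strongly $M$-regular in $\mathrm{Mod}(C^*)$, which is the desired statement.

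The point to watch is that \emph{strong} relative regularity requires the kernels and images in question not merely to be direct summands but to be \emph{fully invariant}, hence it refers to the full endomorphism monoids of $M$ and $N$. This is exactly where both hypotheses of Theorem~\ref{t:ff-reg} are used: exactness ensures that $\mathrm{Ker}(f)$, $\mathrm{Im}(f)$ and the property of being a direct summand are preserved and reflected by $F$, while full faithfulness ensures that the endomorphism monoids, and thus the condition that a summand be invariant under every endomorphism, coincide on the two sides. Since all of this is already absorbed into Theorem~\ref{t:ff-reg}, I expect no real obstacle beyond the verification that the comodule-to-module functor $F$ is exact and fully faithful, which is classical and parallels the treatment already used for (dual) strongly relative Rickart objects in \cite{CO1}.
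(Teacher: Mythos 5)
Your proposal is correct and follows essentially the paper's own route: the paper notes that ${}^C\mathcal{M}$ is a coreflective abelian full subcategory of ${\rm Mod}(C^*)$ and invokes Corollary~\ref{c:rc-reg}, whose proof is precisely the observation that the inclusion functor is exact and fully faithful together with an application of Theorem~\ref{t:ff-reg}. You have simply inlined that corollary by verifying exactness and full faithfulness of the inclusion directly via the identification of ${}^C\mathcal{M}$ with the rational right $C^*$-modules, which is the same argument.
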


\begin{proof} Note that ${}^C\mathcal{M}$ is a coreflective abelian subcategory of ${\rm Mod}(C^*)$
and use Corollary \ref{c:rc-reg}. 
\end{proof}

In order to discuss the transfer of the strong relative regular property to endomorphism
rings, we establish some general results involving adjoint functors.

First, we need the following property on the transfer of direct relative injectivity (projectivity).

\begin{prop} \label{p:transferdip} Let $(L,R)$ be an adjoint pair of covariant functors $L:\mathcal{A}\to \mathcal{B}$ and
$R:\mathcal{B}\to \mathcal{A}$ between abelian categories. 
\begin{enumerate}
\item Assume that $L$ is exact. Let $M$ and $N$ be objects of $\mathcal{B}$ such that $M,N\in {\rm Stat}(R)$. 
Then $M$ is direct $N$-injective in $\mathcal{B}$ if and only if $R(M)$ is direct $R(N)$-injective in $\mathcal{A}$.
\item Assume that $R$ is exact. Let $M$ and $N$ be objects of $\mathcal{A}$ such that $M,N\in {\rm Adst}(R)$. 
Then $N$ is direct $M$-projective in $\mathcal{A}$ if and only if $L(N)$ is direct $L(M)$-projective in $\mathcal{B}$.
\end{enumerate} 
\end{prop}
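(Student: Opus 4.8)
The plan is to prove (1) and deduce (2) by the duality principle, by transporting direct relative injectivity across the static--adstatic correspondence attached to the adjunction. Write $\eta:1_{\mathcal{A}}\to RL$ and $\varepsilon:LR\to 1_{\mathcal{B}}$ for the unit and counit. First I would assemble the formal facts I need: for $M\in{\rm Stat}(R)$ the counit $\varepsilon_M$ is an isomorphism and, by the triangular identity $R(\varepsilon_M)\circ\eta_{R(M)}=1_{R(M)}$, the object $R(M)$ lies in ${\rm Adst}(R)$ with $\eta_{R(M)}$ invertible; moreover ${\rm Stat}(R)$ and ${\rm Adst}(R)$ are closed under isomorphisms and direct summands, since $\varepsilon$ and $\eta$ are compatible with biproduct decompositions and a biproduct of morphisms is invertible exactly when each component is. I would also record that $R$ is left exact (being a right adjoint) and $L$ is exact by hypothesis, so both preserve monomorphisms, and that any additive functor preserves direct summands and split monomorphisms.

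For the forward implication, suppose $M$ is direct $N$-injective and let $\iota:A'\hookrightarrow R(N)$ be a subobject with $A'\cong B$ for some direct summand $B$ of $R(M)$. Since $R(M)\in{\rm Adst}(R)$, both $B$ and $A'$ are adstatic, so $\eta_{A'}$ is an isomorphism. Applying the exact functor $L$ gives a monomorphism $L(\iota)$, and composing with the isomorphism $\varepsilon_N$ presents $L(A')$ as a subobject of $N$ isomorphic to the direct summand $L(B)$ of $LR(M)\cong M$; hence $L(A')$ is a direct summand of $N$, i.e. $\varepsilon_N\circ L(\iota)$ is a split monomorphism. Now I would use naturality of $\eta$ at $\iota$ together with the triangular identity to verify the identity $R(\varepsilon_N\circ L(\iota))\circ\eta_{A'}=\iota$. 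As the first factor is a split monomorphism ($R$ is additive) and $\eta_{A'}$ is invertible, $\iota$ splits, so $A'$ is a direct summand of $R(N)$.

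For the converse, suppose $R(M)$ is direct $R(N)$-injective and let $j:N'\hookrightarrow N$ satisfy $N'\cong M'$ for a direct summand $M'$ of $M$; then $M'$, and hence $N'$, is static. Since $R$ is left exact, $R(j)$ is a monomorphism, and $R(N')\cong R(M')$ is isomorphic to the direct summand $R(M')$ of $R(M)$, so $R(j)$ splits. Applying $L$ and invoking naturality of $\varepsilon$ at $j$ yields $\varepsilon_N\circ LR(j)=j\circ\varepsilon_{N'}$; since $LR(j)$ is a split monomorphism and $\varepsilon_{N'},\varepsilon_N$ are invertible, $j$ splits, so $N'$ is a direct summand of $N$.

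The part I expect to require the most care is not any computation but the bookkeeping that the subobject produced after a round trip through the adjunction is the original inclusion itself rather than merely an isomorphic copy of it; this is precisely where the triangular identities and the naturality squares for $\eta$ and $\varepsilon$ are used, and where the hypotheses $M,N\in{\rm Stat}(R)$ (via $\varepsilon_M,\varepsilon_N,\eta_{A'}$ invertible) are consumed. Exactness of $L$ is essential only in the forward implication, in order to keep $L(\iota)$ monic for an arbitrary subobject $A'$. Part (2) then follows by the duality principle in abelian categories.
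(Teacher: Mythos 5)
Your proof is correct, and it takes a somewhat different route from the paper's. You work throughout with the definition of direct relative injectivity (a subobject isomorphic to a direct summand is itself a direct summand) and transport split monomorphisms across the adjunction: in the forward direction via the factorization $\iota = R(\varepsilon_N\circ L(\iota))\circ\eta_{A'}$, whose first factor is a split monomorphism and whose second factor is invertible, and in the converse via $j=\varepsilon_N\circ LR(j)\circ\varepsilon_{N'}^{-1}$. The paper instead invokes the Hom-extension characterization of direct relative injectivity from \cite[Lemma~5.1]{CK} (every morphism $\beta:P\to M$ extends along a monomorphism $\alpha:P\to N$ whose domain is isomorphic to a direct summand of $M$), so it must carry an arbitrary test morphism $\beta$ through the computation and construct the extending morphism explicitly: $\gamma=R(h)$ in one direction and $\gamma=\varepsilon_ML(h)\varepsilon_N^{-1}$ in the other, the latter also requiring a section of $\varepsilon_P$ obtained from a splitting of $M\to P$. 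The underlying adjunction calculus is identical in the two arguments (naturality of $\eta$ and $\varepsilon$ plus the triangular identities), and the hypotheses are consumed in the same places: exactness of $L$ only in the forward direction, left exactness of $R$ as a right adjoint, and the static/adstatic assumptions to invert the relevant unit/counit components. What your version buys is economy and self-containedness: you never need \cite[Lemma~5.1]{CK}, and you only have to split a single morphism rather than handle an arbitrary $\beta$. What the paper's version buys is uniformity with the rest of its functor-transfer machinery, which systematically runs through that lemma. Your preliminary closure facts for ${\rm Stat}(R)$ and ${\rm Adst}(R)$ under isomorphisms and direct summands are correct and are used implicitly by the paper as well, and both treatments obtain part (2) by the duality principle, exactly as the paper does.
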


\begin{proof} (1) Let $\varepsilon:LR\to 1_{\mathcal{B}}$ and $\eta:1_{\mathcal{A}}\to RL$ be the counit and the unit of
adjunction respectively. 

Assume that $M$ is direct $N$-injective. Let $\alpha:P\to R(N)$ be a monomorphism with $P$ isomorphic to a 
direct summand of $R(M)$, and let $\beta:P\to R(M)$ be a morphism. Since $L$ is left exact and $M,N\in {\rm Stat}(R)$, 
$\varepsilon_NL(\alpha):L(P)\to N$ is a monomorphism, and $L(P)$ is isomorphic to a direct summand of $M$. 
By the direct $N$-injectivity of $M$, there exists a morphism $h:M\to N$ such that 
$h\varepsilon_NL(\alpha)=\varepsilon_ML(\beta)$ \cite[Lemma~5.1]{CK}. 
Let $\gamma=R(h):R(N)\to R(M)$. Since $(L,R)$ is an adjoint pair, 
we have $R(\varepsilon_N)\eta_{R(N)}=1_{R(N)}$ and $\eta$ is a natural transformation. It follows that: 
\begin{align*} \gamma\alpha&=R(h)\alpha=R(h)R(\varepsilon_N)\eta_{R(N)}\alpha=R(h)R(\varepsilon_N)RL(\alpha)\eta_P \\
& =R(h\varepsilon_NL(\alpha))\eta_P=R(\varepsilon_ML(\beta))\eta_P=
R(\varepsilon_M)RL(\beta)\eta_P=R(\varepsilon_M)\eta_{R(M)}\beta=\beta.
\end{align*}
This shows that $R(M)$ is direct $R(N)$-injective.

Conversely, assume that $R(M)$ is direct $R(N)$-injective. Let $\alpha:P\to N$ be a monomorphism with $P$ isomorphic to a 
direct summand of $M$, and let $\beta:P\to M$ be a morphism. Since $R$ is left exact, $R(\alpha):R(P)\to R(N)$ is a 
monomorphism, and $R(P)$ is clearly isomorphic to a direct summand of $R(M)$. By the direct $R(N)$-injectivity of $R(M)$, 
there exists a morphism $h:R(N)\to R(M)$ such that $hR(\alpha)=R(\beta)$  \cite[Lemma~5.1]{CK}. 
There exists a split epimorphism $p:M\to P$. Since $\varepsilon_PLR(p)=p\varepsilon_M$ is a split epimorphism, 
then so is $\varepsilon_P$. Hence there exists a morphism $r:P\to LR(P)$ such that $\varepsilon_Pr=1_P$. 
Since $(L,R)$ is an adjoint pair, $\varepsilon$ is a natural transformation. 
Also, $\varepsilon_N$ is an isomorphism, because $N\in {\rm Stat}(R)$. 
Let $\gamma=\varepsilon_ML(h)\varepsilon_N^{-1}:N\to M$. It follows that:
\begin{align*} \gamma\alpha&=\gamma\alpha\varepsilon_Pr=\varepsilon_ML(h)\varepsilon_N^{-1}\alpha\varepsilon_Pr=
\varepsilon_ML(h)\varepsilon_N^{-1}\varepsilon_NLR(\alpha)r \\ 
&=\varepsilon_ML(hR(\alpha))r=\varepsilon_MLR(\beta)r=\beta\varepsilon_Pr=\beta.
\end{align*}
This shows that $M$ is direct $N$-injective.
\end{proof}

\begin{theo} \label{t:equiv-reg} Let $(L,R)$ be an adjoint pair of covariant functors $L:\mathcal{A}\to \mathcal{B}$ and
$R:\mathcal{B}\to \mathcal{A}$ between abelian categories. 
\begin{enumerate}
\item Assume that $L$ is exact. Let $M$ and $N$ be objects of $\mathcal{B}$ such that $M,N\in {\rm Stat}(R)$. 
Then the following are equivalent:
\begin{enumerate}[(i)] 
\item $N$ is strongly $M$-regular in $\mathcal{B}$.
\item $R(N)$ is strongly $R(M)$-regular in $\mathcal{A}$ and for every morphism $f:M\to N$, ${\rm Ker}(f)$ is $M$-cyclic.
\item $R(N)$ is strongly $R(M)$-regular in $\mathcal{A}$ and for every morphism $f:M\to N$, ${\rm Ker}(f)\in {\rm Stat}(R)$.
\end{enumerate}
\item Assume that $R$ is exact. Let $M$ and $N$ be objects of $\mathcal{A}$ such that $M,N\in {\rm Adst}(R)$. 
Then the following are equivalent:
\begin{enumerate}[(i)] 
\item $N$ is strongly $M$-regular in $\mathcal{A}$.
\item $L(N)$ is strongly $L(M)$-regular in $\mathcal{B}$ and for every morphism $f:M\to N$, ${\rm Coker}(f)$ is
$N$-cocyclic.
\item $L(N)$ is strongly $L(M)$-regular in $\mathcal{B}$ and for every morphism $f:M\to N$, ${\rm Coker}(f)\in {\rm
Adst}(R)$.
\end{enumerate}
\end{enumerate}
\end{theo}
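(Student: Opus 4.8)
The plan is to reduce the whole statement to the characterization of strong relative regularity furnished by Theorem~\ref{t:char}, which splits it into the strongly relative Rickart property together with direct relative injectivity, and then to transfer each of these two ingredients across the adjunction separately. The key auxiliary tool, tailored for exactly this purpose, is Proposition~\ref{p:transferdip}.

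For part (1), I would begin by applying Theorem~\ref{t:char} in $\mathcal{B}$, so that condition (i) becomes the conjunction ``$N$ is strongly $M$-Rickart and $M$ is direct $N$-injective''. Each conjunct is then pushed into $\mathcal{A}$. The direct-injectivity conjunct is handled by Proposition~\ref{p:transferdip}(1), which under the standing hypotheses ($L$ exact and $M,N\in {\rm Stat}(R)$) gives that $M$ is direct $N$-injective in $\mathcal{B}$ if and only if $R(M)$ is direct $R(N)$-injective in $\mathcal{A}$. The Rickart conjunct is handled by the companion transfer theorem for strongly relative Rickart objects in \cite{CO1}, which under the same hypotheses asserts that $N$ is strongly $M$-Rickart in $\mathcal{B}$ if and only if $R(N)$ is strongly $R(M)$-Rickart in $\mathcal{A}$ and the side condition holds that ${\rm Ker}(f)$ is $M$-cyclic for every $f:M\to N$, and moreover that this side condition is equivalent to ${\rm Ker}(f)\in {\rm Stat}(R)$.

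Combining the two translations, condition (i) is equivalent to the assertion that $R(N)$ is strongly $R(M)$-Rickart, that $R(M)$ is direct $R(N)$-injective, and that every ${\rm Ker}(f)$ is $M$-cyclic. Regrouping the first two clauses and invoking Theorem~\ref{t:char} now in $\mathcal{A}$, they merge into the single statement ``$R(N)$ is strongly $R(M)$-regular'', which is precisely condition (ii). Since strong relative regularity entails the strongly relative Rickart property, the equivalence (ii)$\Leftrightarrow$(iii) is then exactly the equivalence of the two side conditions ($M$-cyclicity versus membership in ${\rm Stat}(R)$) supplied by the same strongly Rickart transfer theorem. Part (2) is obtained by the duality principle in abelian categories, replacing Proposition~\ref{p:transferdip}(1) and Theorem~\ref{t:char} by their dual counterparts, and kernels, $M$-cyclicity and ${\rm Stat}(R)$ by cokernels, $N$-cocyclicity and ${\rm Adst}(R)$.

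All the real work being delegated to the cited results, the only delicate point is to make sure that the strongly Rickart transfer theorem of \cite{CO1} really packages together both the main equivalence and the equivalence of the two kernel conditions, so that (ii) and (iii) fall out simultaneously; this is the hinge of the argument, and the one step I would verify against the exact statement in the companion paper.
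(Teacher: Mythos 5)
Your proposal is correct and follows exactly the paper's own (very terse) proof, which simply cites \cite[Theorem~4.9]{CO1}, Theorem~\ref{t:char} and Proposition~\ref{p:transferdip} --- precisely the three ingredients you combine, in the same way: split strong regularity via Theorem~\ref{t:char}, transfer direct injectivity via Proposition~\ref{p:transferdip}, and transfer the strongly Rickart part (together with the two equivalent kernel conditions giving (ii) and (iii)) via the companion paper's theorem. Your closing caveat is exactly the right thing to check, and the paper's reliance on \cite[Theorem~4.9]{CO1} confirms that the cited result does package both the main equivalence and the $M$-cyclicity/${\rm Stat}(R)$ equivalence.
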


\begin{proof} This follows by \cite[Theorem~4.9]{CO1}, Theorem \ref{t:char} and Proposition \ref{p:transferdip}.
\end{proof}

Now we can extend Corollary \ref{c:gp-reg} from self-regularity to relative regularity.

\begin{coll} \label{c:gp-reg2} Let $\mathcal{A}$ be a Grothendieck category with a generator $U$, $R={\rm
End}_{\mathcal{A}}(U)$, $S={\rm Hom}_{\mathcal{A}}(U,-):\mathcal{A}\to {\rm Mod}(R)$, and let $M$ and $N$ be objects
of $\mathcal{A}$. Then $N$ is a strongly $M$-regular object of $\mathcal{A}$ if and only if 
$S(N)$ is a strongly $M$-regular right $R$-module and for every morphism $f:M\to N$, ${\rm Ker}(f)$ is $M$-cyclic.
\end{coll}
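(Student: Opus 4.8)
The plan is to realize this as a special case of Theorem~\ref{t:equiv-reg}(1), applied to the adjunction furnished by the Gabriel--Popescu Theorem. First I would invoke \cite[Chapter~X, Theorem~4.1]{St} to obtain that $S={\rm Hom}_{\mathcal{A}}(U,-):\mathcal{A}\to {\rm Mod}(R)$ is fully faithful and admits an exact left adjoint $T:{\rm Mod}(R)\to \mathcal{A}$. Thus $(T,S)$ is an adjoint pair of covariant functors with $T$ exact, and this places us exactly in the framework of Theorem~\ref{t:equiv-reg}(1): the theorem's category $\mathcal{A}$ is played by ${\rm Mod}(R)$, its category $\mathcal{B}$ by the present $\mathcal{A}$, its exact left adjoint $L$ by $T$, and its right adjoint $R$ by $S$. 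Under these identifications the objects $M,N$ of $\mathcal{A}$ are the objects of the theorem's category $\mathcal{B}$.

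Next I would discharge the standing hypothesis $M,N\in {\rm Stat}(S)$. Because $S$ is fully faithful, the counit $\varepsilon:TS\to 1_{\mathcal{A}}$ of the adjunction is a natural isomorphism; in particular $\varepsilon_X$ is an isomorphism for \emph{every} object $X$ of $\mathcal{A}$. Hence every object of $\mathcal{A}$ is static, and the conditions $M\in {\rm Stat}(S)$ and $N\in {\rm Stat}(S)$ are automatically met, so no restriction is imposed on $M$ and $N$.

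With these matchings in place, the equivalence (i)$\Leftrightarrow$(ii) of Theorem~\ref{t:equiv-reg}(1) translates directly into the statement: $N$ is strongly $M$-regular in $\mathcal{A}$ if and only if $S(N)$ is strongly $S(M)$-regular in ${\rm Mod}(R)$ and for every morphism $f:M\to N$, ${\rm Ker}(f)$ is $M$-cyclic. This is precisely the asserted conclusion (extending Corollary~\ref{c:gp-reg} from self-regularity to relative regularity), and the proof is complete. I do not anticipate a genuine obstacle here: the only points requiring care are orienting the adjunction so that the \emph{exact} functor is the left adjoint---which is exactly the Gabriel--Popescu left adjoint $T$---and confirming that full faithfulness of $S$ renders the staticness hypothesis vacuous; beyond this the result is a direct specialization of the already-established Theorem~\ref{t:equiv-reg}.
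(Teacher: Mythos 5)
Your proposal is correct and follows essentially the same route as the paper: invoke the Gabriel--Popescu Theorem to get that $S$ is fully faithful with an exact left adjoint $T$, note that full faithfulness makes every object of $\mathcal{A}$ static so the hypothesis $M,N\in {\rm Stat}(S)$ is automatic, and then specialize Theorem~\ref{t:equiv-reg}(1) to the adjoint pair $(T,S)$. Your explicit justification of staticness via the counit being a natural isomorphism is a detail the paper leaves implicit, but the argument is identical in substance.
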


\begin{proof} By the Gabriel-Popescu Theorem \cite[Chapter~X, Theorem~4.1]{St}, $S$ is a fully faithful functor, 
hence $M\in {\rm Stat}(S)$ for every object $M$ of $\mathcal{A}$. Also, $S$ has an exact left adjoint 
$T:{\rm Mod}(R)\to \mathcal{A}$. Then the conclusion follows by Theorem \ref{t:equiv-reg}.  
\end{proof}

For contravariant functors we have the following results.

\begin{prop} \label{p:transferdip2} Let $(L,R)$ be an adjoint pair of contravariant functors $L:\mathcal{A}\to \mathcal{B}$ and
$R:\mathcal{B}\to \mathcal{A}$ between abelian categories. 
\begin{enumerate}
\item Assume that $(L,R)$ is left adjoint and $L$ is exact. 
Let $M$ and $N$ be objects of $\mathcal{B}$ such that $M,N\in {\rm Refl}(R)$. 
Then $M$ is direct $N$-injective in $\mathcal{B}$ if and only if $R(M)$ is direct $R(N)$-projective in $\mathcal{A}$.
\item Assume that $(L,R)$ is right adjoint and $R$ is exact. 
Let $M$ and $N$ be objects of $\mathcal{A}$ such that $M,N\in {\rm Refl}(L)$. 
Then $N$ is direct $M$-projective in $\mathcal{A}$ if and only if $L(N)$ is direct $L(M)$-injective in $\mathcal{B}$.
\end{enumerate} 
\end{prop}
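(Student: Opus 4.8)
The plan is to prove part (1) only, leaving part (2) to the duality principle in abelian categories (which interchanges products and coproducts, kernels and cokernels, injectivity and projectivity, and the static and adstatic classes). The guiding idea is that a contravariant adjoint pair is just a covariant adjoint pair once one of the two categories is replaced by its opposite, so the statement should reduce to the already-established covariant version, Proposition~\ref{p:transferdip}, the only genuine change being that the single contravariant functor $R$ converts direct injectivity on the $\mathcal{A}$-side into direct projectivity.

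Concretely, I would regard $L$ as a \emph{covariant} functor $\hat{L}:\mathcal{A}\to \mathcal{B}^{\mathrm{op}}$ and $R$ as a covariant functor $\hat{R}:\mathcal{B}^{\mathrm{op}}\to \mathcal{A}$, both given by $\hat{L}(A)=L(A)$, $\hat{R}(B)=R(B)$ on objects. Moving one argument into the opposite category, the defining natural isomorphism of the contravariant left adjoint pair $(L,R)$ rewrites as a covariant adjunction with $\hat{R}$ left adjoint to $\hat{L}$; exactness of $L$ becomes exactness of $\hat{L}$, and one checks that the reflexivity (co)unit of $(L,R)$ is precisely the unit of $\hat{R}\dashv\hat{L}$, so that $M,N\in \mathrm{Refl}(R)$ translates into the corresponding $\mathrm{Adst}$-condition for $\hat{L}$. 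I would then apply Proposition~\ref{p:transferdip} (the part whose exactness hypothesis is on the right adjoint $\hat{L}$) to the pair $(\hat{R},\hat{L})$. The final ingredient is the elementary observation that, for objects $X,Y$ of any abelian category $\mathcal{C}$, the assertion ``$X$ is direct $Y$-injective in $\mathcal{C}^{\mathrm{op}}$'' is literally ``$X$ is direct $Y$-projective in $\mathcal{C}$'' (the two relative notions are interchanged by passage to the opposite, the template object and the ambient object keeping their roles). Tracking this through the reduction, the conclusion of Proposition~\ref{p:transferdip}, which relates direct injectivity in $\mathcal{B}$ to direct injectivity in $\mathcal{A}$, becomes exactly the asserted equivalence between direct $N$-injectivity of $M$ in $\mathcal{B}$ and direct $R(N)$-projectivity of $R(M)$ in $\mathcal{A}$.

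The main obstacle is purely bookkeeping: one must pin down the convention for ``left adjoint pair'' fixed in \cite{CO1}, decide accordingly which category to dualize and which part of Proposition~\ref{p:transferdip} to invoke, and verify that $\mathrm{Refl}(R)$ matches $\mathrm{Stat}$ (respectively $\mathrm{Adst}$) under the chosen identification by comparing the explicit unit and counit. Once these dictionaries are in place, no further computation is required. Alternatively, I could give a self-contained argument mirroring the proof of Proposition~\ref{p:transferdip}: using the reflexivity (co)units $\varepsilon$ and $\eta$ of the contravariant pair together with the lifting criterion \cite[Lemma~5.1]{CK}, one transports a splitting diagram witnessing direct $N$-injectivity of $M$ across the adjunction into one witnessing direct $R(N)$-projectivity of $R(M)$, and conversely; here the only delicate point is to dualize the test diagram of \cite[Lemma~5.1]{CK} correctly, since the contravariant functors reverse the monomorphism and epimorphism occurring in it.
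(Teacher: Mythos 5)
Your reduction is correct, and it matches the paper's own treatment: the paper states Proposition~\ref{p:transferdip2} without any proof, presenting it as the routine contravariant analogue of Proposition~\ref{p:transferdip}, and your passage to $\mathcal{B}^{\mathrm{op}}$ — under which the pair becomes a covariant adjunction, $\mathrm{Refl}(R)$ becomes the relevant adstatic class, and direct injectivity in the opposite category becomes direct projectivity with both objects keeping their roles — is exactly the bookkeeping that justifies this omission. Your fallback of mirroring the proof of Proposition~\ref{p:transferdip} via \cite[Lemma~5.1]{CK} and the reflexivity morphisms would equally suffice, and part (2) follows by the duality principle as the paper's convention prescribes.
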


\begin{theo} \label{t:dual-reg} Let $(L,R)$ be a pair of contravariant functors $L:\mathcal{A}\to \mathcal{B}$ and
$R:\mathcal{B}\to \mathcal{A}$ between abelian categories. 
\begin{enumerate}
\item Assume that $(L,R)$ is left adjoint and $L$ is exact. 
Let $M$ and $N$ be objects of $\mathcal{B}$ such that $M,N\in {\rm Refl}(R)$.
Then the following are equivalent:
\begin{enumerate}[(i)] 
\item $N$ is strongly $M$-regular in $\mathcal{B}$.
\item $R(M)$ is strongly $R(N)$-regular in $\mathcal{A}$ and for every morphism $f:M\to N$, ${\rm Ker}(f)$ is $M$-cyclic.
\item $R(M)$ is strongly $R(N)$-regular in $\mathcal{A}$ and for every morphism $f:M\to N$, ${\rm Ker}(f)\in {\rm Refl}(R)$.
\end{enumerate}
\item Assume that $(L,R)$ is right adjoint and $R$ is exact. 
Let $M$ and $N$ be objects of $\mathcal{A}$ such that $M,N\in {\rm Refl}(L)$. Then the following are equivalent:
\begin{enumerate}[(i)] 
\item $N$ is strongly $M$-regular in $\mathcal{A}$.
\item $L(M)$ is strongly $L(N)$-regular in $\mathcal{B}$ and for every morphism $f:M\to N$, ${\rm Coker}(f)$ is $N$-cocyclic.
\item $L(M)$ is strongly $L(N)$-regular in $\mathcal{B}$ and for every morphism $f:M\to N$, ${\rm Coker}(f)\in {\rm Refl}(L)$.
\end{enumerate}
\end{enumerate}
\end{theo}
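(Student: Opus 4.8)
The plan is to follow the same three-step template used for the covariant adjoint pair in Theorem~\ref{t:equiv-reg}, now adapted to contravariant functors. The three ingredients are: Theorem~\ref{t:char}, which splits the strongly regular property into a (dual) strongly Rickart half and a direct relative injectivity/projectivity half; the transfer theorem for (dual) strongly relative Rickart objects via contravariant adjoint pairs established in \cite{CO1} (the contravariant counterpart of \cite[Theorem~4.9]{CO1}); and Proposition~\ref{p:transferdip2}, which transfers direct relative injectivity and projectivity across the contravariant adjunction.

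For part (1), I would first apply Theorem~\ref{t:char}(ii) in $\mathcal{B}$ to rewrite ``$N$ is strongly $M$-regular'' as the conjunction ``$N$ is strongly $M$-Rickart in $\mathcal{B}$ and $M$ is direct $N$-injective in $\mathcal{B}$''. Since $(L,R)$ is left adjoint with $L$ exact, the contravariant functor $R$ reverses arrows and sends kernels to cokernels; consequently the strongly Rickart half transfers to ``$R(M)$ is dual strongly $R(N)$-Rickart in $\mathcal{A}$'' (note the swap of the roles of $M$ and $N$, just as the conclusion $R(M)$ strongly $R(N)$-regular swaps them relative to the covariant case), carrying along the side condition that each $\mathrm{Ker}(f)$ is $M$-cyclic, respectively lies in $\mathrm{Refl}(R)$; this is exactly the content of the contravariant Rickart transfer theorem of \cite{CO1}. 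Simultaneously, Proposition~\ref{p:transferdip2}(1) converts ``$M$ is direct $N$-injective in $\mathcal{B}$'' into ``$R(M)$ is direct $R(N)$-projective in $\mathcal{A}$''.

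I would then recombine these two halves in $\mathcal{A}$ by invoking Theorem~\ref{t:char}(iii): ``$R(M)$ is dual strongly $R(N)$-Rickart and direct $R(N)$-projective'' is precisely ``$R(M)$ is strongly $R(N)$-regular''. This yields the equivalences (i)$\Leftrightarrow$(ii)$\Leftrightarrow$(iii), the two versions of the side condition ($M$-cyclic versus membership in $\mathrm{Refl}(R)$) corresponding to the two forms in which the Rickart transfer theorem records it. Part (2) follows by the dual argument, using the right-adjoint case of Proposition~\ref{p:transferdip2}, Theorem~\ref{t:char} with the roles of injectivity and projectivity interchanged, and replacing $\mathrm{Ker}$, $M$-cyclic, $\mathrm{Refl}(R)$ throughout by $\mathrm{Coker}$, $N$-cocyclic, $\mathrm{Refl}(L)$.

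The main obstacle is bookkeeping the contravariance rather than any substantial new idea: one must check that the arrow-reversal correctly interchanges ``strongly Rickart'' with ``dual strongly Rickart'' and swaps the roles of $M$ and $N$, so that the Rickart-half output (``$R(M)$ dual strongly $R(N)$-Rickart'') and the injectivity-half output (``$R(M)$ direct $R(N)$-projective'') are exactly the two hypotheses of the single clause Theorem~\ref{t:char}(iii) in $\mathcal{A}$. In particular, this is why one splits via clause (ii) in $\mathcal{B}$ but recombines via clause (iii) in $\mathcal{A}$. Verifying that the reflexivity hypotheses $M,N\in\mathrm{Refl}(R)$ are precisely those under which both the contravariant Rickart transfer and Proposition~\ref{p:transferdip2} apply, and that the kernel side condition is preserved under these identifications, is the only point requiring care.
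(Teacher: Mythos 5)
Your proposal is correct and is essentially the paper's own proof: the paper's argument is exactly the one-line combination of the contravariant Rickart transfer theorem \cite[Theorem~4.10]{CO1}, Theorem~\ref{t:char}, and Proposition~\ref{p:transferdip2}, which your write-up simply spells out in detail. Your bookkeeping of the contravariance (splitting via Theorem~\ref{t:char}(ii) in $\mathcal{B}$, swapping $M$ and $N$ and interchanging Rickart with dual Rickart and injective with projective, then recombining via Theorem~\ref{t:char}(iii) in $\mathcal{A}$) is exactly how those three cited results fit together.
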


\begin{proof} This follows by \cite[Theorem~4.10]{CO1}, Theorem \ref{t:char} and Proposition \ref{p:transferdip2}.
\end{proof}

Next we deduce the transfer of the strong self-regular property to endomorphism rings of (graded) modules. 
The following theorem generalizes \cite[Proposition~4.3]{LRR13}.

\begin{theo} \label{t:end-reg} Let $M$ be a right $R$-module, and let $S={\rm End}_R(M)$. Then:
\begin{enumerate}[(i)]
\item $M$ is a strongly self-regular right $R$-module.
\item $S$ is a strongly self-regular right $S$-module.
\item $S$ is a strongly self-regular left $S$-module.
\end{enumerate}
\end{theo}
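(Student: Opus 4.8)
The plan is to establish that the three listed conditions are equivalent, which is forced by the mathematics: none of them holds for an arbitrary module (take $M=K\oplus K$ over a field $K$, where $S\cong M_2(K)$ fails all three), so the three items listed under ``Then:'' must stand or fall together, and the content of the theorem is precisely that they do. The pivot is Proposition~\ref{p:Mstrreg}, which converts strong self-regularity of an object into strong regularity of its endomorphism ring; applying it to each of (i)--(iii) turns all three into statements about one ring, after which they can be compared directly with essentially no computation.

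First I would record the three endomorphism rings attached to (i), (ii), (iii). By Proposition~\ref{p:Mstrreg}, condition (i) holds if and only if $S=\End_R(M)$ is a strongly regular ring. For (ii) the relevant ring is $\End_S(S_S)$; identifying each right $S$-module endomorphism of $S$ with left multiplication by its value at $1$ gives a ring isomorphism $\End_S(S_S)\cong S$, so by Proposition~\ref{p:Mstrreg} condition (ii) again holds if and only if $S$ is strongly regular, and (i)$\Leftrightarrow$(ii) is immediate. For (iii) the relevant ring is $\End_S({}_S S)$; here a left $S$-module endomorphism of $S$ is right multiplication by its value at $1$, and since composing right multiplications reverses their order one obtains $\End_S({}_S S)\cong S^{\mathrm{op}}$. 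Thus, once more by Proposition~\ref{p:Mstrreg}, condition (iii) holds if and only if $S^{\mathrm{op}}$ is a strongly regular ring.

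It then remains only to close the loop by showing that $S$ is strongly regular if and only if $S^{\mathrm{op}}$ is, which I would deduce from the definition recalled before Proposition~\ref{p:strreg}: the defining condition $a=a^2b$ read in $S^{\mathrm{op}}$ unfolds to $a=ba^2$ in $S$, and the two formulations are equivalent in any ring by that very definition; alternatively, one may invoke Proposition~\ref{p:strreg}, since both von Neumann regularity and centrality of all idempotents are left--right symmetric. The step I would treat as the main obstacle is condition (iii), where the identification $\End_S({}_S S)\cong S^{\mathrm{op}}$ rather than $\cong S$ must be made correctly: it is exactly this opposite-ring twist that makes the left--right symmetry of strong regularity the decisive ingredient. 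With that identification and the symmetry in hand, all three conditions are seen to be equivalent to strong regularity of $S$, completing the argument.
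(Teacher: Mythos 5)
Your proposal is correct and takes essentially the same route as the paper, whose entire primary proof is the citation ``This follows by Proposition~\ref{p:Mstrreg}'': your identifications $\End_S(S_S)\cong S$ and $\End_S({}_S S)\cong S^{\mathrm{op}}$, together with the left--right symmetry of strong regularity (built into the definition $a=a^2b \Leftrightarrow a=ba^2$), are exactly the details that citation suppresses, and your reading of the theorem as an equivalence is the intended one. The paper's additional argument via unit regular rings and Ehrlich's theorem is offered only as an alternative, so its absence from your write-up is not a gap.
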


\begin{proof} This follows by Proposition \ref{p:Mstrreg}. Alternatively, 
if $S$ is a strongly self-regular right (left) $R$-module, then $S$ is a strongly regular ring. 
It follows that $S$ is a unit regular ring, that is, for every $f\in S$, there exists an automorphism $g\in S$ 
such that $f=fgf$ \cite{Ehrlich}. By \cite[Theorem~1]{Ehrlich}, $S$ is unit regular if and only if 
$S$ is von Neumann regular and for every $f\in S$, one has ${\rm Ker}(f)\cong {\rm Coker}(f)$. Hence for every $f\in S$,  
${\rm Ker}(f)$ is $M$-cyclic and ${\rm Coker}(f)$ is $M$-cocyclic. Now conclude by \cite[Theorem~4.12]{CO1}. 
\end{proof}

\begin{coll} \label{c:endgr-reg} Let $M$ be a graded right $R$-module, and let $S={\rm END}_R(M)$. 
\begin{enumerate}[(i)]
\item $M$ is a strongly self-regular graded right $R$-module.
\item $S$ is a strongly self-regular graded right $S$-module.
\item $S$ is a strongly self-regular graded left $S$-module.
\end{enumerate}
\end{coll}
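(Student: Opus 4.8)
The plan is to transport the proof of Theorem~\ref{t:end-reg} into the graded setting. Write $S={\rm END}_R(M)=\bigoplus_{\sigma\in G}{\rm END}_R(M)_{\sigma}$, and recall that the category ${\rm gr}(R)$ of graded right $R$-modules, the category ${\rm gr}(S)$ of graded right $S$-modules, and the category of graded left $S$-modules are all abelian (indeed Grothendieck) categories, so that Proposition~\ref{p:Mstrreg} is available in each of them (idempotents split in any abelian category). The central observation is that the categorical endomorphism ring of $M$ in ${\rm gr}(R)$ is exactly the identity component $S_e$: a morphism $M\to M$ in ${\rm gr}(R)$ is precisely a degree-preserving $R$-homomorphism, i.e.\ an element of ${\rm END}_R(M)_e$, so ${\rm End}_{{\rm gr}(R)}(M)=S_e$.

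First I would apply Proposition~\ref{p:Mstrreg} in ${\rm gr}(R)$ to get that $(i)$ holds if and only if $S_e$ is a strongly regular ring. For $(ii)$ I would compute the endomorphism ring of $S$ as an object of ${\rm gr}(S)$: every degree-preserving right $S$-module endomorphism of $S$ is left multiplication by some $a\in S_e$, whence ${\rm End}_{{\rm gr}(S)}(S_S)\cong S_e$, and Proposition~\ref{p:Mstrreg} gives that $(ii)$ holds if and only if $S_e$ is strongly regular. For $(iii)$, the left-handed version of the same computation shows that the degree-preserving left $S$-module endomorphisms of $S$ are the right multiplications by elements of $S_e$, which compose in the opposite order, so the endomorphism ring of ${}_SS$ in the category of graded left $S$-modules is isomorphic to $S_e^{\rm op}$; Proposition~\ref{p:Mstrreg} then yields that $(iii)$ holds if and only if $S_e^{\rm op}$ is strongly regular.

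To close the loop I would use that strong regularity is left-right symmetric: this is immediate from the defining condition recalled before Proposition~\ref{p:strreg} (for every $a$ there is $b$ with $a=a^2b$, equivalently with $a=ba^2$), or from Proposition~\ref{p:strreg} since both von Neumann regularity and the property of being abelian pass to the opposite ring. Hence ``$S_e$ strongly regular'' and ``$S_e^{\rm op}$ strongly regular'' coincide, and $(i)$, $(ii)$, $(iii)$ are all equivalent to the single statement that $S_e$ is a strongly regular ring.

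I expect the only steps requiring genuine care to be the endomorphism-ring identifications ${\rm End}_{{\rm gr}(R)}(M)=S_e$ and ${\rm End}_{{\rm gr}(S)}(S_S)\cong S_e$ together with its left analogue $\cong S_e^{\rm op}$, since these are exactly what allow the categorical Proposition~\ref{p:Mstrreg} to be applied uniformly across the three graded categories; once they are in place, the rest is formal.
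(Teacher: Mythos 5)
Your proof is correct, but it takes a genuinely different route from the paper's. For this corollary the paper does not redo the endomorphism-ring identifications: its proof mirrors the \emph{alternative} proof of Theorem~\ref{t:end-reg}, i.e.\ it passes from strong regularity of the endomorphism ring to unit regularity, invokes Ehrlich's theorem (\cite[Theorem~1]{Ehrlich}: unit regular $=$ von Neumann regular $+$ ${\rm Ker}(f)\cong {\rm Coker}(f)$ for all $f$) to get that the relevant kernels and cokernels are $M$-cyclic and $M$-cocyclic, and then concludes by the graded transfer result \cite[Corollary~4.13]{CO1}, which compares $M$ in ${\rm gr}(R)$ with $S$ in ${\rm gr}(S)$ via the functor ${\rm HOM}_R(M,-)$. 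Your argument is instead the graded analogue of the paper's \emph{primary} proof of Theorem~\ref{t:end-reg}: apply Proposition~\ref{p:Mstrreg} in each of the three graded categories and identify the categorical endomorphism rings, ${\rm End}_{{\rm gr}(R)}(M)=S_e$, ${\rm End}_{{\rm gr}(S)}(S_S)\cong S_e$, and ${\rm End}({}_S S)\cong S_e^{\rm op}$, then use the left-right symmetry of strong regularity. These identifications are correct, and the crucial point you handle properly is that the categorical endomorphism ring in a graded module category is the identity component $S_e$, \emph{not} the graded ring $S={\rm END}_R(M)$ itself; as a bonus your proof yields the sharper statement that (i)--(iii) are all equivalent to the single condition that $S_e$ is a strongly regular ring. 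What the paper's route buys is uniformity: the same transfer template (\cite[Theorem~4.12, Corollary~4.13]{CO1}) is reused in Section~7 for the (dual) strongly Baer analogues, where no comparably simple endomorphism-ring reduction is available, and it keeps the full graded ring $S$ in view. What your route buys is economy and self-containedness: it needs neither Ehrlich's unit-regularity theorem nor the ${\rm Stat}$/transfer machinery, only Proposition~\ref{p:Mstrreg} and elementary graded algebra.
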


\begin{proof} This follows in a similar way as the alternative proof of Theorem \ref{t:end-reg} by using \cite[Corollary~4.13]{CO1}.
\end{proof}

\section{(Dual) strongly relative Baer objects}

In this section we begin to systematically apply our theory of (dual) strongly relative Rickart objects 
to the study of some corresponding Baer-type objects of an abelian category, called (dual) strongly relative Baer objects.

Let us first recall the following definition.

\begin{defn} \cite[Definition~6.1]{CK} \rm Let $M$ and $N$ be objects of an abelian category $\mathcal{A}$. Then $N$ is called: 
\begin{enumerate}
\item \emph{$M$-Baer} if for every family $(f_i)_{i\in I}$ with each $f_i\in \Hom_{\mathcal{A}}(M,N)$, $\bigcap_{i\in I}
{\rm Ker}(f_i)$ is a direct summand of $M$. 
\item \emph{dual $M$-Baer} if for every family $(f_i)_{i\in I}$ with each $f_i\in \Hom_{\mathcal{A}}(M,N)$, $\sum_{i\in
I} {\rm Im}(f_i)$ is a direct summand of $N$.  
\item \emph{self-Baer} if $N$ is $N$-Baer.
\item \emph{dual self-Baer} if $N$ is dual $N$-Baer.
\end{enumerate} 
\end{defn}

Now we introduce and study a particular instance of both the (dual) strongly relative Rickart property and 
the (dual) relative Baer property.

\begin{defn} \rm Let $M$ and $N$ be objects of an abelian category $\mathcal{A}$. Then $N$ is called: 
\begin{enumerate}
\item \emph{strongly $M$-Baer} if for every family $(f_i)_{i\in I}$ with each $f_i\in \Hom_{\mathcal{A}}(M,N)$, $\bigcap_{i\in I}
{\rm Ker}(f_i)$ is a fully invariant direct summand of $M$. 
\item \emph{dual strongly $M$-Baer} if for every family $(f_i)_{i\in I}$ with each $f_i\in \Hom_{\mathcal{A}}(M,N)$, $\sum_{i\in
I} {\rm Im}(f_i)$ is a fully invariant direct summand of $N$.  
\item \emph{strongly self-Baer} if $N$ is strongly $N$-Baer.
\item \emph{dual strongly self-Baer} if $N$ is dual strongly $N$-Baer.
\end{enumerate} 
\end{defn}

\begin{ex} \rm Consider the right $R$-module $M$ from Example \ref{e:strreg}. 
We have seen that $M$ is both strongly self-Rickart and dual strongly self-Rickart. 
But $M$ is neither self-Baer \cite[Example~2.18]{LRR11}, nor dual self-Baer \cite[Example~4.1]{LRR10}, 
and so it is neither strongly self-Baer, nor dual strongly self-Baer. 
\end{ex}

\begin{lemm} \label{l:Bwduo} Let $M$ and $N$ be objects of an abelian category $\mathcal{A}$. 
\begin{enumerate}
\item Assume that every direct summand of $M$ is isomorphic to a subobject of $N$. 
Then $N$ is strongly $M$-Baer if and only if $N$ is $M$-Baer and $M$ is weak duo.
\item Assume that every direct summand of $N$ is isomorphic to a factor object of $M$. 
Then $N$ is dual strongly $M$-Baer if and only if $N$ is dual $M$-Baer and $N$ is weak duo.
\end{enumerate}
\end{lemm}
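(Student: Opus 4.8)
The plan is to prove only part (1), since part (2) then follows by the duality principle in abelian categories. Throughout I will use that ``$M$ is weak duo'' means precisely that every direct summand of $M$ is a fully invariant direct summand (equivalently, every section $k\colon K\to M$ is fully invariant, in the sense recorded in the proof of Proposition~\ref{p:Mstrreg}). With this reading, the equivalence splits into a trivial direction and one genuinely using the standing hypothesis.

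First I would dispatch the reverse implication, which in fact does not use the hypothesis that direct summands of $M$ embed in $N$. Assuming $N$ is $M$-Baer and $M$ is weak duo, I take an arbitrary family $(f_i)_{i\in I}$ with each $f_i\in \Hom_{\mathcal{A}}(M,N)$; by $M$-Baerness $\bigcap_{i\in I}{\rm Ker}(f_i)$ is a direct summand of $M$, and by the weak duo hypothesis every direct summand of $M$ is automatically fully invariant. Hence $\bigcap_{i\in I}{\rm Ker}(f_i)$ is a fully invariant direct summand, so $N$ is strongly $M$-Baer.

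For the forward implication, suppose $N$ is strongly $M$-Baer. That $N$ is $M$-Baer is immediate, since a fully invariant direct summand is in particular a direct summand. The substantive task is to show $M$ is weak duo, and here is where I would invoke the hypothesis. I take an arbitrary direct summand $K$ of $M$, write $M=K\oplus K'$, and let $\pi\colon M\to K'$ be the projection with kernel $K$. The idea is to realize $K$ as the kernel of a single morphism landing in $N$: since $K'$ is itself a direct summand of $M$, the standing hypothesis furnishes a monomorphism $j\colon K'\to N$, and then $f:=j\pi\colon M\to N$ satisfies ${\rm Ker}(f)={\rm Ker}(\pi)=K$ because $j$ is monic. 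Applying the strongly $M$-Baer property to the one-element family $\{f\}$ yields that $K=\bigcap_{i}{\rm Ker}(f_i)$ is a fully invariant direct summand of $M$; as $K$ was arbitrary, $M$ is weak duo.

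The main (and essentially only) obstacle is precisely this construction in the forward direction. The strongly $M$-Baer hypothesis constrains only intersections of kernels of maps \emph{into} $N$, so to deduce that an arbitrary direct summand $K$ of $M$ is fully invariant one must first exhibit $K$ as such an intersection. The assumption ``every direct summand of $M$ is isomorphic to a subobject of $N$'' is exactly what enables this: it lets the complementary summand $K'$ embed into $N$ and thereby converts the projection $\pi$ with kernel $K$ into a genuine morphism $M\to N$. Without it, endomorphisms of $M$ need not be constrained by morphisms into $N$, and there would be no mechanism forcing direct summands to be fully invariant.
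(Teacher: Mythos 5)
Your proof is correct. For comparison: the paper's own proof of part (1) is a two-line reduction. For the forward direction it observes that strongly $M$-Baer implies strongly $M$-Rickart (single-element families), and then invokes \cite[Proposition~2.9]{CO1}, which is exactly where the hypothesis that every direct summand of $M$ embeds in $N$ gets used to conclude that $M$ is weak duo; the converse is dismissed as ``clear''. What you do differently is to inline the content of that cited proposition: you realize an arbitrary direct summand $K$ of $M$ as the kernel of a single morphism $M\to N$, by embedding the complementary summand $K'$ into $N$ via the standing hypothesis and composing with the projection $\pi:M\to K'$, and then apply the strongly $M$-Baer condition to the one-element family $\{j\pi\}$. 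Your route buys self-containedness (no dependence on the companion paper) and makes the role of the embedding hypothesis completely explicit; the paper's route is shorter and emphasizes the structural point that the Baer-level statement needs nothing beyond the Rickart-level one, since only kernels of single morphisms ever enter the weak duo argument. Your observation that the reverse implication does not use the embedding hypothesis at all is accurate and is implicit in the paper's ``The converse is clear''.
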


\begin{proof} (1) Assume that $N$ is strongly $M$-Baer. Clearly, $N$ is $M$-Baer. Also, $N$ is strongly $M$-Rickart. 
Then $M$ is weak duo by \cite[Proposition~2.9]{CO1}. The converse is clear. 
\end{proof}

We immediately have the following corollaries.

\begin{coll} \label{c:Bwduo} Let $M$ be an object of an abelian category $\mathcal{A}$. Then:
\begin{enumerate} 
\item $M$ is strongly self-Baer if and only if $M$ is self-Baer and weak duo. 
\item $M$ is dual strongly self-Baer if and only if $M$ is dual self-Baer and weak duo.
\end{enumerate}
\end{coll}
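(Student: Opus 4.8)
The plan is to derive both statements directly from Lemma~\ref{l:Bwduo} by specializing to the case $N=M$. The only point that genuinely requires attention is verifying that the standing hypothesis of each part of that lemma becomes a trivial condition once the two objects coincide; this is precisely what makes the present statement a corollary rather than a separate argument.

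For part (1), I would set $N=M$ in Lemma~\ref{l:Bwduo}(1). Its hypothesis requires that every direct summand of $M$ be isomorphic to a subobject of $N$; with $N=M$ this reads ``every direct summand of $M$ is isomorphic to a subobject of $M$'', which holds trivially, since any direct summand of $M$ is, via its canonical section, a subobject of $M$ (isomorphic to itself). The lemma then yields that $N=M$ is strongly $M$-Baer if and only if it is $M$-Baer and $M$ is weak duo. Unwinding the definitions, $M$ being strongly $M$-Baer means $M$ is strongly self-Baer, and $M$ being $M$-Baer means $M$ is self-Baer, which is exactly the asserted equivalence.

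For part (2), I would dually set $N=M$ in Lemma~\ref{l:Bwduo}(2). Its hypothesis, that every direct summand of $N$ be isomorphic to a factor object of $M$, becomes ``every direct summand of $M$ is isomorphic to a factor object of $M$'', which again holds trivially, because any direct summand of $M$ is a factor object of $M$ via its canonical retraction. The lemma then gives that $M$ is dual strongly self-Baer if and only if $M$ is dual self-Baer and weak duo.

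Since the specialization is immediate and the auxiliary hypotheses degenerate to vacuous conditions, there is no real obstacle to overcome; the whole content of the proof is the observation that a direct summand is simultaneously a subobject (through its section) and a factor object (through its retraction) of the ambient object. Consistently with the convention announced in the introduction, one could alternatively obtain part (2) from part (1) by the duality principle in abelian categories.
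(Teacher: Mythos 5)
Your proposal is correct and matches the paper's route exactly: the paper states the corollary as an immediate consequence of Lemma~\ref{l:Bwduo} with $N=M$, which is precisely your specialization, including the (tacit) observation that the lemma's hypotheses become vacuous since a direct summand of $M$ is both a subobject and a factor object of $M$.
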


\begin{coll} \label{c:Bindec} Let $M$ be an indecomposable object of an abelian category $\mathcal{A}$. Then:
\begin{enumerate} 
\item $M$ is strongly self-Baer if and only if $M$ is self-Baer. 
\item $M$ is dual strongly self-Baer if and only if $M$ is dual self-Baer.
\end{enumerate}
\end{coll}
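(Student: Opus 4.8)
The plan is to deduce both parts directly from Corollary~\ref{c:Bwduo}, which characterizes (dual) strong self-Baerness as (dual) self-Baerness together with the weak duo condition. Since in each of those two equivalences the only extra hypothesis beyond (dual) self-Baerness is that $M$ be weak duo, it suffices to prove that every indecomposable object of an abelian category is automatically weak duo; parts~(1) and~(2) then follow from Corollary~\ref{c:Bwduo}(1) and Corollary~\ref{c:Bwduo}(2) respectively.

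To establish this, recall that $M$ is weak duo precisely when every section $k:K\to M$ is a fully invariant section, equivalently when every direct summand of $M$ is fully invariant. If $M$ is indecomposable, then its only direct summands are $0$ and $M$ itself. Both are fully invariant: any endomorphism $h:M\to M$ sends $0$ into $0$, and trivially sends $M$ into $M$ (concretely, representing the summand $M$ by the identity section $1_M$, the equality $h\cdot 1_M=1_M\cdot h$ exhibits the required factorization). Hence $M$ is weak duo.

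Combining this with Corollary~\ref{c:Bwduo}, we conclude that for indecomposable $M$ the weak duo hypothesis is redundant, so $M$ is strongly self-Baer if and only if $M$ is self-Baer, which is part~(1); part~(2) is obtained in the same way, or by the duality principle in abelian categories. There is no substantial obstacle here: the only point requiring a moment's care is the passage from ``indecomposable'' to ``only trivial direct summands,'' which uses that a section into $M$ has image a direct summand of $M$ and is therefore, by indecomposability, either zero or an isomorphism.
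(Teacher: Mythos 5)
Your proposal is correct and follows exactly the route the paper intends: Corollary~\ref{c:Bindec} is stated as an immediate consequence of Corollary~\ref{c:Bwduo} (equivalently, Lemma~\ref{l:Bwduo} with $M=N$), once one observes that an indecomposable object has only the trivial direct summands $0$ and $M$, both fully invariant, and hence is weak duo. Your verification of that observation, including the remark that a section has image a direct summand and is therefore zero or an isomorphism, is exactly the missing elementary step, so nothing further is needed.
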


\begin{prop} \label{p:Bstrring} Let $M$ be an object of an abelian category $\mathcal{A}$. 
\begin{enumerate}
\item $M$ is strongly self-Baer if and only if $M$ is self-Baer and ${\rm End}_{\mathcal{A}}(M)$ is abelian.
\item $M$ is dual strongly self-Baer if and only if $M$ is dual self-Baer and ${\rm End}_{\mathcal{A}}(M)$ is abelian.
\end{enumerate}
\end{prop}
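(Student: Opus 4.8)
The plan is to reduce part~(1) to two already-available ingredients: the characterization of strongly self-Baer objects through the weak duo condition, and the equivalence between the weak duo property and abelianness of the endomorphism ring. First I would apply Corollary~\ref{c:Bwduo}(1), which asserts that $M$ is strongly self-Baer if and only if $M$ is self-Baer and weak duo. Since both the desired characterization and this corollary already carry the ``self-Baer'' hypothesis, the proposition reduces entirely to showing that, for an arbitrary object $M$ of an abelian category, $M$ is weak duo if and only if ${\rm End}_{\mathcal{A}}(M)$ is abelian.

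For that equivalence I would argue as in the relevant portion of the proof of Proposition~\ref{p:Mstrreg}. Because idempotents split in an abelian category, each idempotent $e\in {\rm End}_{\mathcal{A}}(M)$ may be written $e=kp$ with $k:K\to M$ a section and $p:M\to K$ satisfying $pk=1_K$, and conversely each section produces such an idempotent; under this correspondence the summand cut out by $e$ is fully invariant exactly when $hk$ factors through $k$ for every endomorphism $h$. Assuming $M$ is weak duo, fully invariance gives $hk=k\alpha$, so $he=hkp=k\alpha p$ and $ehe=kpk\alpha p=k\alpha p=he$; applying the same computation to the complementary idempotent $1-e$, whose image is again a direct summand and hence fully invariant, yields $eh=ehe$, and therefore $eh=he$, so $e$ is central. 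Conversely, assuming ${\rm End}_{\mathcal{A}}(M)$ abelian, for any section $k$ with projection $p$ the idempotent $e=kp$ is central, so $hkp=he=eh=kph$, and right-multiplication by $k$ gives $hk=hkpk=kphk=k(phk)$; thus every section is fully invariant and $M$ is weak duo.

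In the final write-up I would not reproduce this computation but simply cite \cite[Proposition~2.14]{CO1}, which records precisely this weak-duo/abelian equivalence, together with Corollary~\ref{c:Bwduo}(1). Combining the two gives part~(1): $M$ is strongly self-Baer if and only if $M$ is self-Baer and ${\rm End}_{\mathcal{A}}(M)$ is abelian. Part~(2) then follows by the duality principle in abelian categories, interchanging kernels with cokernels, sections with retractions, and the self-Baer property with its dual.

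The main point worth flagging, rather than a genuine obstacle, is the bookkeeping in the forward direction: weak duo must be shown to force \emph{centrality} of every idempotent, not merely left semicentrality, which is why the argument has to be run a second time on the complementary summand $1-e$. All remaining steps are formal, and since the content is already packaged into Corollary~\ref{c:Bwduo} and \cite[Proposition~2.14]{CO1}, the cleanest proof is essentially a two-line citation.
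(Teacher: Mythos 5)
Your proposal is correct, and it is essentially the paper's proof: the paper likewise handles (1) by combining Corollary~\ref{c:Bwduo} with the results of \cite{CO1} linking weak duo objects and abelian endomorphism rings, and obtains (2) by duality. The one small divergence is in the converse: the paper passes through the strongly self-Rickart property, using \cite[Proposition~2.14]{CO1} in the form ``self-Rickart and ${\rm End}_{\mathcal{A}}(M)$ abelian implies strongly self-Rickart'' and then \cite[Corollary~2.10]{CO1} to extract weak duo, whereas you reduce to the bare equivalence ``weak duo $\Leftrightarrow$ ${\rm End}_{\mathcal{A}}(M)$ abelian.'' Judging from how the paper cites it, \cite[Proposition~2.14]{CO1} is stated for (strongly) self-Rickart objects rather than as that bare equivalence, so your two-line citation version would slightly misattribute the reference; but this is harmless, because your explicit idempotent computation --- including the necessary second pass with the complementary idempotent $1-e$ to upgrade left semicentrality to centrality, exactly the point the paper's Proposition~\ref{p:Mstrreg} handles the same way --- is correct and makes the argument self-contained.
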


\begin{proof} (1) If $M$ is strongly self-Baer, then $M$ is clearly self-Baer and ${\rm End}_{\mathcal{A}}(M)$ is abelian
by \cite[Proposition~2.14]{CO1}.

Conversely, assume that $M$ is self-Baer and ${\rm End}_{\mathcal{A}}(M)$ is abelian. 
Then $M$ is self-Rickart and ${\rm End}_{\mathcal{A}}(M)$ is abelian, and so 
$M$ is strongly self-Rickart by \cite[Proposition~2.14]{CO1}. Then $M$ is weak duo by \cite[Corollary~2.10]{CO1}. 
Finally, $M$ is strongly self-Baer by Corollary \ref{c:Bwduo}. 
\end{proof}

We also have the following connection between (dual) strongly relative Baer objects and (dual) strongly relative Rickart objects.

\begin{lemm} \label{l:BR} Let $M$ and $N$ be objects of an abelian category $\mathcal{A}$.
\begin{enumerate}
\item Assume that there exists the product $N^I$ for every set $I$. Then $N$ is strongly $M$-Baer if and only if $N^I$ is
strongly $M$-Rickart for every set $I$.
\item Assume that there exists the coproduct $M^{(I)}$ for every set $I$. Then $N$ is dual strongly $M$-Baer if and only
if $N$ is dual strongly $M^{(I)}$-Rickart for every set $I$. 
\end{enumerate}
\end{lemm}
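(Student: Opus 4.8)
The plan is to prove part~(1), with part~(2) following by the duality principle in abelian categories. Fix a set $I$ and write $\pi_i:N^I\to N$ for the canonical projections. The whole argument rests on the standard bijection, furnished by the universal property of the product, between morphisms $f:M\to N^I$ and families $(f_i)_{i\in I}$ with each $f_i\in\Hom_{\mathcal{A}}(M,N)$: it is given by $f\mapsto(\pi_if)_{i\in I}$, with inverse sending $(f_i)_{i\in I}$ to the unique $f$ satisfying $\pi_if=f_i$ for all $i$.

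The key step is to establish the identity ${\rm Ker}(f)=\bigcap_{i\in I}{\rm Ker}(f_i)$ as subobjects of $M$. I would argue at the level of representable functors: a morphism $u:X\to M$ factors through ${\rm Ker}(f)$ exactly when $fu=0$, and since the projections of a product are jointly monic, $fu=0$ holds if and only if $\pi_ifu=f_iu=0$ for every $i$, that is, if and only if $u$ factors through each ${\rm Ker}(f_i)$, which is precisely the defining property of the intersection $\bigcap_{i\in I}{\rm Ker}(f_i)$. Hence the two subobjects of $M$ coincide by Yoneda. It is worth noting that the hypothesis that $N^I$ exists is exactly what guarantees that this intersection exists at all, namely as ${\rm Ker}(f)$.

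Granting this identity, both implications are purely formal, and the fully-invariance bookkeeping is automatic, because being a fully invariant direct summand of $M$ is a property of the subobject of $M$ itself, not of the morphism realizing it. If $N$ is strongly $M$-Baer, then for any set $I$ and any $f:M\to N^I$ the subobject ${\rm Ker}(f)=\bigcap_{i}{\rm Ker}(\pi_if)$ is a fully invariant direct summand of $M$ by hypothesis applied to the family $(\pi_if)_{i\in I}$, so $N^I$ is strongly $M$-Rickart. Conversely, if $N^I$ is strongly $M$-Rickart for every set $I$, then given any family $(f_i)_{i\in I}$ we assemble the induced $f:M\to N^I$ and read off that $\bigcap_{i}{\rm Ker}(f_i)={\rm Ker}(f)$ is a fully invariant direct summand of $M$, so $N$ is strongly $M$-Baer. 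The only genuine content is the kernel-intersection identity; everything else is formal, and indeed the underlying non-strong equivalence ($N$ is $M$-Baer if and only if $N^I$ is $M$-Rickart for all $I$) is the already-known skeleton that ``strongly'' merely decorates with the full-invariance condition.
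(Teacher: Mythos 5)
Your proof is correct and takes essentially the same route as the paper: the paper's entire proof is the citation of \cite[Lemma~3.11]{CO}, which is precisely the kernel--intersection identity ${\rm Ker}(f)=\bigcap_{i\in I}{\rm Ker}(\pi_i f)$ that you establish directly via the jointly monic projections, the rest being the same formal bijection between morphisms $M\to N^I$ and families $(f_i)_{i\in I}$. Your remark that being a fully invariant direct summand is a property of the subobject itself (so the ``strong'' decoration transfers automatically) is exactly what makes the paper's appeal to the non-strong lemma legitimate, and your observation that the existence of $N^I$ guarantees the existence of the intersection is a detail the paper leaves implicit.
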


\begin{proof} (1) This is immediate by using \cite[Lemma~3.11]{CO}.
\end{proof}

\begin{coll} \label{c:epimonobaer} Let $r:M\to M'$ be an epimorphism and $s:N'\to N$ a monomorphism in an abelian
category $\mathcal{A}$. 
\begin{enumerate} 
\item Assume that $\mathcal{A}$ has products. If $N$ is strongly $M$-Baer, then $N'$ is strongly $M'$-Baer.
\item Assume that $\mathcal{A}$ has coproducts. If $N$ is dual strongly $M$-Baer, then $N'$ is dual strongly $M'$-Baer.
\end{enumerate}
\end{coll}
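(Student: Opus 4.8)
The plan is to reduce the strongly Baer property to the strongly Rickart property for powers by means of Lemma~\ref{l:BR}, and then to invoke the transfer result for strongly relative Rickart objects from the companion paper, namely \cite[Theorem~2.17]{CO1}, which is exactly the Rickart analogue underlying Theorem~\ref{t:epimono-reg}. Since both hypotheses (products, resp. coproducts) are precisely what Lemma~\ref{l:BR} requires, the strategy is to pass to powers, apply the known transfer along the given epimorphism and monomorphism, and pass back.

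For part~(1), assuming $\mathcal{A}$ has products and that $N$ is strongly $M$-Baer, Lemma~\ref{l:BR}(1) gives that $N^I$ is strongly $M$-Rickart for every set $I$. The goal is to deduce that $(N')^I$ is strongly $M'$-Rickart for every set $I$, after which a second application of Lemma~\ref{l:BR}(1) yields that $N'$ is strongly $M'$-Baer. To bridge the two, I would feed into \cite[Theorem~2.17]{CO1} the source epimorphism $r:M\to M'$ unchanged, together with the target monomorphism $s^I:(N')^I\to N^I$, the $I$-fold product of $s:N'\to N$. The one point that genuinely needs checking is that $s^I$ is a monomorphism; this holds because products, being limits, preserve monomorphisms, so an $I$-indexed product of the monomorphism $s$ is again a monomorphism. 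With these data, \cite[Theorem~2.17]{CO1} delivers that $(N')^I$ is strongly $M'$-Rickart, as required. Note also that the hypothesis that $\mathcal{A}$ has products guarantees that both $N^I$ and $(N')^I$ exist, so every invocation of Lemma~\ref{l:BR} is legitimate.

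Part~(2) is the dual statement and follows from part~(1) by the duality principle in abelian categories, replacing Lemma~\ref{l:BR}(1) by Lemma~\ref{l:BR}(2): one passes to coproducts $M^{(I)}$, uses that coproducts preserve epimorphisms to see that $r^{(I)}:M^{(I)}\to (M')^{(I)}$ is an epimorphism, and applies the dual strongly Rickart transfer. The main (and essentially only) obstacle is the categorical bookkeeping around (co)products: confirming the existence of the relevant (co)powers and that the induced map $s^I$ (dually $r^{(I)}$) has the correct monomorphism (dually epimorphism) property, after which the conclusion is a formal consequence of Lemma~\ref{l:BR} and \cite[Theorem~2.17]{CO1}.
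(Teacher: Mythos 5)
Your proposal is correct and follows essentially the same route as the paper: reduce via Lemma~\ref{l:BR} to the strongly Rickart property of powers, transfer along $r$ and the induced monomorphism $s^I:(N')^I\to N^I$ using \cite[Theorem~2.17]{CO1}, and pass back via Lemma~\ref{l:BR}, with part (2) by duality. Your explicit check that $s^I$ is a monomorphism (products preserve monomorphisms) is a detail the paper leaves implicit.
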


\begin{proof} (1) If $N$ is strongly $M$-Baer, then $N^I$ is strongly $M$-Rickart for every set $I$ by Lemma~\ref{l:BR}. 
Then $N'^I$ is strongly $M'$-Rickart for every set $I$ by \cite[Theorem~2.17]{CO1}. 
Hence $N'$ is strongly $M'$-Baer by Lemma~\ref{l:BR}.
\end{proof}

\begin{coll} \label{c:dsbaer} Let $M$ and $N$ be objects of an abelian category $\mathcal{A}$, $M'$ a direct summand 
of $M$ and $N'$ a direct summand of $N$. 

(1) If $N$ is strongly $M$-Baer, then $N'$ is strongly $M'$-Baer.

(2) If $N$ is dual strongly $M$-Baer, then $N'$ is dual strongly $M'$-Baer.
\end{coll}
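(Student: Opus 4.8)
The plan is to prove part (1) directly from the definition of strongly $M'$-Baer, treating each direct summand as a split idempotent, and then to obtain part (2) by the duality principle in abelian categories. Since $M'$ is a direct summand of $M$, I would fix a decomposition $M=M'\oplus M''$ with projection $\pi:M\to M'$ and inclusion $\iota:M'\to M$, and since $N'$ is a direct summand of $N$, a decomposition $N=N'\oplus N''$ with inclusion $j:N'\to N$. Note that one cannot simply invoke Corollary~\ref{c:epimonobaer} here, even though a direct summand supplies a split epimorphism $M\to M'$ and a split monomorphism $N'\to N$, because the present statement makes no assumption about the existence of products; hence a direct argument is needed.

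First I would set up the reduction. Given an arbitrary family $(g_i)_{i\in I}$ with each $g_i\in \Hom_{\mathcal{A}}(M',N')$, form the lifted family $f_i=j\,g_i\,\pi\in \Hom_{\mathcal{A}}(M,N)$. Since $j$ is a monomorphism and $\pi$ is the projection with kernel $M''$, a short computation gives ${\rm Ker}(f_i)={\rm Ker}(g_i)\oplus M''$ as subobjects of $M$, identifying ${\rm Ker}(g_i)\subseteq M'$ with its image under $\iota$. Applying $\pi$ to separate the two summands then yields
\[
\bigcap_{i\in I}{\rm Ker}(f_i)=\Big(\bigcap_{i\in I}{\rm Ker}(g_i)\Big)\oplus M''.
\]
Because $N$ is strongly $M$-Baer by hypothesis, the left-hand side is a fully invariant direct summand of $M$; writing $K:=\bigcap_{i\in I}{\rm Ker}(g_i)$, this says that $K\oplus M''$ is a fully invariant direct summand of $M$.

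The heart of the argument, and the step I expect to be the main obstacle, is to descend this property to $M'$, i.e. to show that $K$ is a fully invariant direct summand of $M'$. Here I would use that a direct summand is fully invariant exactly when its associated idempotent is central. Let $e\in \End_{\mathcal{A}}(M)$ be the central idempotent with image $K\oplus M''$. Centrality of $e$ and $\pi\iota=1_{M'}$ give $e(M')=M'\cap(K\oplus M'')=K\subseteq M'$, the last equality by modularity of the subobject lattice since $K\subseteq M'$; consequently $e|_{M'}=\pi e\iota$ is a well-defined idempotent of $\End_{\mathcal{A}}(M')$ with image $K$, so $K$ is a direct summand of $M'$. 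For full invariance I would extend any $h\in \End_{\mathcal{A}}(M')$ to $\tilde h=\iota h\pi\in \End_{\mathcal{A}}(M)$ and use the centrality of $e$ together with $\pi\iota=1_{M'}$ to verify that $e|_{M'}$ commutes with $h$; thus $e|_{M'}$ is central in $\End_{\mathcal{A}}(M')$ and $K$ is fully invariant in $M'$. This proves that $N'$ is strongly $M'$-Baer, completing (1), and (2) follows dually.
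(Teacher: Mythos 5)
Your overall architecture is sound, and your opening observation is well taken: the paper gives no proof of this corollary at all (it is presented as immediate, presumably via Corollary~\ref{c:epimonobaer} or by analogy with the companion paper), yet Corollary~\ref{c:epimonobaer} carries a products hypothesis that the present statement does not, so a direct definition-level argument like yours is genuinely what is needed. Your reduction step is also correct: with $f_i=j\,g_i\,\pi$ one gets ${\rm Ker}(f_i)=\pi^{-1}({\rm Ker}(g_i))={\rm Ker}(g_i)\oplus M''$, and since preimages commute with intersections, $\bigcap_{i\in I}{\rm Ker}(f_i)=\bigl(\bigcap_{i\in I}{\rm Ker}(g_i)\bigr)\oplus M''=K\oplus M''$ is a fully invariant direct summand of $M$.

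The gap is in the descent step: your stated principle that ``a direct summand is fully invariant exactly when its associated idempotent is central'' is false. The correct equivalence is with \emph{left semicentral} idempotents: ${\rm Im}(e)$ is fully invariant in $M$ if and only if $he=ehe$ for every $h\in \End_{\mathcal{A}}(M)$; centrality of $e$ is equivalent to \emph{both} ${\rm Im}(e)$ and ${\rm Ker}(e)$ being fully invariant, and you have no control over the complement of $K\oplus M''$. Concretely, for $M=\mathbb{Z}/p\mathbb{Z}\oplus\mathbb{Z}$ in the category of abelian groups, the summand $\mathbb{Z}/p\mathbb{Z}$ is fully invariant (there are no nonzero maps $\mathbb{Z}/p\mathbb{Z}\to\mathbb{Z}$), but its idempotent $e$ is not central: taking $h(a,n)=(\bar n,0)$ one has $he=0\neq eh$. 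So the ``central idempotent $e$ with image $K\oplus M''$'' that you invoke need not exist, and the identities you derive from centrality (that $e$ restricts to an endomorphism of $M'$, that $e(M')=M'\cap(K\oplus M'')$, and that $e|_{M'}$ commutes with every $h$) are all obtained from a property you do not have.

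Fortunately, nothing you actually need requires centrality, so the proof is repairable without changing its shape. For the summand part, let $e$ be any idempotent with ${\rm Im}(e)=K\oplus M''$ and put $f=\pi e\iota\in\End_{\mathcal{A}}(M')$: then ${\rm Im}(f)\subseteq\pi(K\oplus M'')=K$, and $f$ restricts to the identity on $K$ because $e$ does, so $f$ is idempotent with image $K$; no invariance is used at all. (Alternatively, pure modularity: writing $M=(K\oplus M'')\oplus C$ and using $K\subseteq M'$ gives $M'=M'\cap\bigl(K\oplus(M''\oplus C)\bigr)=K\oplus\bigl(M'\cap(M''\oplus C)\bigr)$.) For full invariance, given $h\in\End_{\mathcal{A}}(M')$, apply full invariance of $K\oplus M''$ to $\tilde h=\iota h\pi$ to get $h(K)=\tilde h(K)\subseteq K\oplus M''$; since also $h(K)\subseteq M'$, the modular law yields $h(K)\subseteq M'\cap(K\oplus M'')=K\oplus(M'\cap M'')=K$. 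With these substitutions your part (1) is complete, and part (2) by the duality principle is fine.
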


\begin{coll} \label{c:extensionsbaer} Let $\mathcal{A}$ be an abelian category. 
\begin{enumerate} \item Consider a short exact sequence $$0\to N_1\to N\to N_2\to 0$$ and an object $M$ in $\mathcal{A}$
such that $N_1$ and $N_2$ are strongly $M$-Baer. Then $N$ is strongly $M$-Baer.  
\item Consider a short exact sequence $$0\to M_1\to M\to M_2\to 0$$ and an object $N$ in $\mathcal{A}$ such that $N$ is
dual strongly $M_1$-Baer and dual strongly $M_2$-Baer. Then $N$ is dual strongly $M$-Baer. 
\end{enumerate}
\end{coll}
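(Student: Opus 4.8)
The plan is to reduce Corollary~\ref{c:extensionsbaer} to the corresponding statement for strongly relative Rickart objects, mirroring the way Lemma~\ref{l:BR} converts the Baer property into a Rickart property for powers (respectively copowers). For part (1), I would begin by fixing an object $M$ and a short exact sequence $0\to N_1\to N\to N_2\to 0$ with $N_1$ and $N_2$ strongly $M$-Baer, and I would work under the standing assumption that $\mathcal{A}$ has products (as in Corollary~\ref{c:epimonobaer}), so that every power $N_1^I$, $N^I$, $N_2^I$ exists. By Lemma~\ref{l:BR}(1), the hypotheses translate into: $N_1^I$ and $N_2^I$ are strongly $M$-Rickart for every set $I$. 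The goal, again by Lemma~\ref{l:BR}(1), is to show that $N^I$ is strongly $M$-Rickart for every set $I$.

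The key observation is that products are exact in this setting, so applying the product functor $(-)^I$ to the given short exact sequence yields a short exact sequence
\begin{equation*}
0\to N_1^I\to N^I\to N_2^I\to 0.
\end{equation*}
At this point I would invoke the extension-closure property for strongly relative Rickart objects, namely \cite[Theorem~2.19]{CO1} (the same companion result used in the proof of Theorem~\ref{t:extensions-reg}): since the outer terms $N_1^I$ and $N_2^I$ are strongly $M$-Rickart, the middle term $N^I$ is strongly $M$-Rickart. As this holds for every set $I$, Lemma~\ref{l:BR}(1) gives that $N$ is strongly $M$-Baer, completing part (1). Part (2) then follows by the duality principle, using Lemma~\ref{l:BR}(2) together with copowers $M^{(I)}$, the exactness of coproducts, and the dual half of \cite[Theorem~2.19]{CO1}.

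The main obstacle I anticipate is verifying that products of short exact sequences remain short exact in the relevant abelian category so that the sequence of powers is genuinely exact, and more delicately, confirming that \cite[Theorem~2.19]{CO1} applies to the relative Rickart property \emph{with the same test object $M$} on both the hypothesis and conclusion side (here $M$ is held fixed while the first argument varies through an extension). If the companion theorem is stated only for extensions in the second variable, this matching is exactly what is needed and should hold, but it is the point to check carefully. A secondary routine concern is ensuring the standing finiteness/existence hypotheses on products (respectively coproducts) are available; these are the same conditions already imposed in Corollary~\ref{c:epimonobaer} and Lemma~\ref{l:BR}, so no new assumption is required. Once these points are settled, the argument is a direct transcription of the Rickart extension result through the Baer-to-Rickart dictionary of Lemma~\ref{l:BR}.
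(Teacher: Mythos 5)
Your proposal is correct and takes essentially the same route as the paper's own proof: both translate the hypotheses through Lemma~\ref{l:BR} into strong $M$-Rickartness of the powers $N_1^I$ and $N_2^I$, pass to the induced exact sequence $0\to N_1^I\to N^I\to N_2^I\to 0$, apply the extension-closure result \cite[Theorem~2.19]{CO1}, and translate back via Lemma~\ref{l:BR}. The two points you flag as potential obstacles (exactness of the product sequence, and that \cite[Theorem~2.19]{CO1} treats extensions in the second variable with $M$ fixed) are precisely what the paper's one-line proof implicitly relies on, so there is no gap.
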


\begin{proof} (1) Since $N_1$ and $N_2$ are strongly $M$-Baer, $N_1^I$ and $N_2^I$ are strongly $M$-Rickart for every set $I$. 
We have an induced short exact sequence $0\to N_1^I\to N^I\to N_2^I\to 0$ for every set $I$. 
Then $N^I$ is strongly $M$-Rickart for every set $I$ by \cite[Theorem~2.19]{CO1}. Hence $N$ is strongly $M$-Baer by Lemma~\ref{l:BR}.
\end{proof}

(Dual) strongly relative Baer objects and (dual) strongly relative Rickart objects are also related as follows.

\begin{theo} \label{t:SS} Let $M$ and $N$ be objects of an abelian category $\mathcal{A}$.
\begin{enumerate}
\item Assume that there exists the product $N^I$ for every set $I$, and every direct summand of $M$ is isomorphic to
a subobject of $N$. Then $N$ is strongly $M$-Baer if and only if $N$ is strongly $M$-Rickart and $M$ has SSIP.
\item Assume that there exists the coproduct $M^{(I)}$ for every set $I$, and every direct summand of $N$ is isomorphic
to a factor object of $M$. Then $N$ is dual strongly $M$-Baer if and only if $N$ is dual strongly $M$-Rickart and 
$N$ has SSSP.
\end{enumerate}
\end{theo}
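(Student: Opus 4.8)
The plan is to prove part~(1) by reducing the strong Baer condition to the strong Rickart condition via Lemma~\ref{l:BR}, and then to match the resulting statement against SSIP; part~(2) will follow by the duality principle in abelian categories. The whole argument rests on two elementary observations. First, strong $M$-Rickartness says exactly that the kernel of a \emph{single} morphism $M\to N$ is a fully invariant direct summand of $M$. Second, an arbitrary intersection of fully invariant subobjects is again fully invariant, since any endomorphism of $M$ carries $\bigcap_{i}K_i$ into each $K_j$, and hence into $\bigcap_{i}K_i$. Consequently, once every $\mathrm{Ker}(f_i)$ is known to be a fully invariant direct summand, the only remaining content of the Baer-type condition is that the intersection $\bigcap_{i}\mathrm{Ker}(f_i)$ be a direct summand, which is precisely what SSIP supplies. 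By Lemma~\ref{l:BR} it therefore suffices to prove that $N^I$ is strongly $M$-Rickart for every set $I$ if and only if $N$ is strongly $M$-Rickart and $M$ has SSIP.

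For the implication ``$\Leftarrow$'' I would take a morphism $f:M\to N^I$ and let $\pi_i:N^I\to N$ be the canonical projections, so that $\mathrm{Ker}(f)=\bigcap_{i\in I}\mathrm{Ker}(\pi_i f)$. Since $N$ is strongly $M$-Rickart, each $\mathrm{Ker}(\pi_i f)$ is a fully invariant direct summand of $M$; SSIP makes their intersection a direct summand, and the second observation above makes it fully invariant. Thus $\mathrm{Ker}(f)$ is a fully invariant direct summand, so $N^I$ is strongly $M$-Rickart for every $I$.

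For ``$\Rightarrow$'', taking $I$ a singleton immediately gives that $N$ is strongly $M$-Rickart. The substantive point is SSIP, and this is exactly where the hypothesis that every direct summand of $M$ is isomorphic to a subobject of $N$ enters: given a family $(K_j)_{j\in J}$ of direct summands of $M$, write $M=K_j\oplus L_j$ with projection $p_j:M\to L_j$, choose a monomorphism $u_j:L_j\to N$ provided by the hypothesis, and set $f_j=u_jp_j:M\to N$, so that $\mathrm{Ker}(f_j)=K_j$. Forming $f=(f_j)_{j\in J}:M\to N^J$ gives $\mathrm{Ker}(f)=\bigcap_{j\in J}K_j$, which is a direct summand because $N^J$ is strongly $M$-Rickart; hence $M$ has SSIP.

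The only place where genuine content beyond bookkeeping appears is this realization of an arbitrary direct summand of $M$ as a kernel of a morphism into $N$ (and, after taking products, into $N^J$); the summand-subobject hypothesis is tailored precisely to furnish it, and it is the step I expect to require the most care. Part~(2) is then obtained by dualizing the entire argument: images of morphisms out of $M$ replace kernels, direct summands of $N$ are realized as such images using that each is a factor object of $M$, the coproduct $M^{(I)}$ replaces the product $N^I$ (invoking the dual of Lemma~\ref{l:BR}), and SSSP replaces SSIP.
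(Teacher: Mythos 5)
Your proof is correct, but it follows a genuinely different route from the paper's. The paper does not argue directly with products at all: it invokes the non-strong analogue of the statement, namely \cite[Theorem~6.3]{CK} ($N$ is $M$-Baer iff $N$ is $M$-Rickart and $M$ has SSIP, under the same summand--subobject hypothesis), together with the ``weak duo'' transfer results --- \cite[Proposition~2.9]{CO1} (strongly $M$-Rickart $\Leftrightarrow$ $M$-Rickart and $M$ weak duo) and Lemma~\ref{l:Bwduo} (strongly $M$-Baer $\Leftrightarrow$ $M$-Baer and $M$ weak duo) --- so that the ``strong'' adjective is peeled off, the known theorem is applied, and the adjective is put back. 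You instead reduce via Lemma~\ref{l:BR} to the statement that $N^I$ is strongly $M$-Rickart for every set $I$, and prove the resulting equivalence with SSIP from scratch: the kernel decomposition $\mathrm{Ker}(f)=\bigcap_i\mathrm{Ker}(\pi_i f)$, the fact that intersections of fully invariant subobjects are fully invariant, and the realization of an arbitrary family of summands $K_j$ as kernels $K_j=\mathrm{Ker}(u_jp_j)$ are all sound, and the last step is exactly where the summand--subobject hypothesis is needed. In effect you reprove the strong analogue of \cite[Theorem~6.3]{CK} directly rather than quoting its non-strong version. What your approach buys is self-containedness (no dependence on \cite{CK} or on the weak duo machinery) and the observation that the hypothesis on direct summands of $M$ is only needed for the implication from strongly $M$-Baer to SSIP, whereas the converse holds unconditionally; what the paper's approach buys is brevity, by delegating all the real work to previously established results.
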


\begin{proof} (1) Assume that $N$ is strongly $M$-Baer. Then $N$ is strongly $M$-Rickart. 
Also, $N$ is $M$-Baer, and so $M$ has SSIP by \cite[Theorem~6.3]{CK}. 

Conversely, assume that $N$ is strongly $M$-Rickart and $M$ has SSIP. Then $N$ is $M$-Rickart and $M$ is weak duo 
by \cite[Proposition~2.9]{CO1}. It follows that $N$ is $M$-Baer by \cite[Theorem~6.3]{CK}. Finally, 
$N$ is strongly $M$-Baer by Lemma \ref{l:Bwduo}.
\end{proof}

\begin{coll} \label{c:BR} Let $M$ be an object of an abelian category $\mathcal{A}$.
\begin{enumerate}
\item Assume that there exists the product $M^I$ for every set $I$. Then $M$ is strongly self-Baer if and only if $M$ is
strongly self-Rickart and has SSIP. In particular, an indecomposable object is strongly self-Baer 
if and only if it is strongly self-Rickart.
\item Assume that there exists the coproduct $M^{(I)}$ for every set $I$. Then $M$ is dual strongly self-Baer if and only if $M$
is dual strongly self-Rickart and has SSSP. In particular, an indecomposable object is dual strongly self-Baer 
if and only if it is dual strongly self-Rickart.
\end{enumerate}
\end{coll}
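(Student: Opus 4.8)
The plan is to obtain this entirely as the self-case $N=M$ of Theorem~\ref{t:SS}, the only subtlety being to verify that the coupling hypothesis of that theorem becomes vacuous when $N=M$, and then to dispatch the indecomposable assertion by a direct triviality check.

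First I would set $N=M$ in Theorem~\ref{t:SS}(1). The hypothesis there requires that there exist the product $M^I$ for every set $I$ (which is assumed here) and that every direct summand of $M$ be isomorphic to a subobject of $N=M$; the latter holds automatically, since a direct summand of $M$ is in particular a subobject of $M$. Hence Theorem~\ref{t:SS}(1) applies verbatim and yields the equivalence: $M$ is strongly self-Baer if and only if $M$ is strongly self-Rickart and $M$ has SSIP.

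For the \emph{in particular} clause, suppose $M$ is indecomposable. Then the only direct summands of $M$ are $0$ and $M$, both of which are fully invariant. Consequently, for any family of fully invariant direct summands of $M$, the intersection is either $0$ (as soon as one member is $0$) or $M$ (when every member is $M$, including the empty family), and in each case it is again a fully invariant direct summand. Thus $M$ has SSIP vacuously, so the condition ``strongly self-Rickart and has SSIP'' reduces to ``strongly self-Rickart''. Combined with the first part, this gives that an indecomposable $M$ is strongly self-Baer if and only if it is strongly self-Rickart.

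Part~(2) follows by the same argument applied to Theorem~\ref{t:SS}(2): taking $N=M$, the requirement that every direct summand of $N=M$ be isomorphic to a factor object of $M$ is automatic (a direct summand is a quotient), and for indecomposable $M$ the SSSP is vacuous by the dual of the observation above. Alternatively, one may simply invoke the duality principle in abelian categories. I do not anticipate any genuine obstacle here: the entire content is the observation that the ``isomorphic to a subobject/factor object'' side condition of Theorem~\ref{t:SS} collapses in the self-case, so the only point requiring care is stating clearly why SSIP (respectively SSSP) is trivially satisfied for an indecomposable object.
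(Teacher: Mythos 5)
Your proof is correct and is essentially the paper's own (implicit) argument: Corollary~\ref{c:BR} is stated without proof precisely because it is the specialization $N=M$ of Theorem~\ref{t:SS}, where the side condition on direct summands becomes automatic, exactly as you observe. Your explicit verification that SSIP (resp.\ SSSP) holds trivially for an indecomposable object, whose only direct summands are $0$ and $M$, correctly settles the \emph{in particular} clause.
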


Recall that a {\it (monotone) Galois connection} between two partially ordered sets $(A,\leq)$ and $(B,\leq)$ consists
of a pair $(\alpha, \beta)$ of two order-preserving functions $\alpha:A\to B$ and $\beta:B\to A$ such that for every
$a\in A$ and $b\in B$, we have $\alpha(a)\leq b \Leftrightarrow a\leq \beta(b)$ (e.g., see \cite{Erne}).

Let $M$ and $N$ be objects of an abelian category $\mathcal{A}$, and denote $U=\Hom_{\mathcal{A}}(M,N)$.
We introduce and use the following notation (see \cite{AN}).

For every subobject $X$ of $M$ and every subobject $Z$ of $U$, we denote: 
\[l_U(X)=\{f\in U\mid X\subseteq {\rm Ker}(f)\}, \quad r_M(Z)=\bigcap_{f\in Z}{\rm Ker}(f).\]

For every subobject $Y$ of $N$ and every subobject $Z$ of $U$, we denote: 
\[l'_U(Y)=\{f\in U\mid {\rm Im}(f)\subseteq Y\}, \quad r'_N(Z)=\sum_{f\in Z}{\rm Im}(f).\]

For a lattice $A$ we denote by $A^{\rm op}$ its dual lattice. One may extend \cite[Propositions~3.1, 3.2 and 3.4]{AN} to
the following theorem in abelian categories, which will be freely used.

\begin{theo} Let $M$ and $N$ be objects of an abelian category $\mathcal{A}$. Then $(r_M,l_U)$ is a Galois
connection between the subobject lattices $L(U)$ and $L(M)^{\rm op}$, and $(r'_N,l'_U)$ is a Galois connection between
the subobject lattices $L(U)$ and $L(N)$. 
\end{theo}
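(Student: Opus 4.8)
The plan is to verify directly the defining properties of a (monotone) Galois connection for each of the two pairs, treating $(r_M,l_U)$ in detail and noting that $(r'_N,l'_U)$ is entirely parallel. First I would check that all four operators are well defined, i.e. that they land in the correct lattices. The operators $r_M$ and $r'_N$ clearly produce subobjects, since $r_M(Z)=\bigcap_{f\in Z}{\rm Ker}(f)$ is an intersection of subobjects of $M$ and $r'_N(Z)=\sum_{f\in Z}{\rm Im}(f)$ is a sum of subobjects of $N$ (here one uses that the relevant intersections and sums exist, which is the case in the categories to which the result is applied). For $l_U$ and $l'_U$ one must check that the indicated subsets of the abelian group $U$ are actually subgroups, that is, closed under subtraction. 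For $l_U(X)$ this is immediate: if $f,g$ both vanish on $X$, then so does $f-g$. For $l'_U(Y)$ it requires the one genuinely categorical observation in the proof, namely that ${\rm Im}(f-g)\subseteq {\rm Im}(f)+{\rm Im}(g)$; this follows because $f$ and $g$ each factor through the subobject ${\rm Im}(f)+{\rm Im}(g)\hookrightarrow N$, and since that inclusion is a monomorphism their difference $f-g$ factors through it as well, so ${\rm Im}(f-g)\subseteq {\rm Im}(f)+{\rm Im}(g)\subseteq Y$.

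With well-definedness in hand, the heart of the argument is the adjunction biconditional, which in each case reduces to an immediate reformulation of the defining conditions. For the first pair, recalling that the order in $L(M)^{\rm op}$ is the reverse of subobject inclusion, the relation $r_M(Z)\leq X$ in $L(M)^{\rm op}$ means $X\subseteq \bigcap_{f\in Z}{\rm Ker}(f)$, i.e. $X\subseteq {\rm Ker}(f)$ for every $f\in Z$, i.e. $f\in l_U(X)$ for every $f\in Z$, i.e. $Z\subseteq l_U(X)$, which is exactly $Z\leq l_U(X)$ in $L(U)$. Hence $r_M(Z)\leq X \Leftrightarrow Z\leq l_U(X)$, as required. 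For the second pair the reasoning is the same: $r'_N(Z)=\sum_{f\in Z}{\rm Im}(f)\subseteq Y$ holds if and only if ${\rm Im}(f)\subseteq Y$ for every $f\in Z$, which is precisely $Z\subseteq l'_U(Y)$; thus $r'_N(Z)\leq Y \Leftrightarrow Z\leq l'_U(Y)$. Note that the contravariance of $r_M$ (larger $Z$ yields smaller $r_M(Z)$) against the covariance of $r'_N$ explains why the first connection involves $L(M)^{\rm op}$ and the second does not.

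Finally, monotonicity of the four operators, though part of the stated definition, need not be checked by hand: it is a standard consequence of the biconditionals just established. Indeed, setting $X=r_M(Z)$ in the first biconditional yields the unit inequality $Z\leq l_U(r_M(Z))$, and setting $Z=l_U(X)$ yields the counit inequality $r_M(l_U(X))\leq X$; together with the biconditional these force both $r_M$ and $l_U$ to be order-preserving, and the same reasoning applies to $(r'_N,l'_U)$. Assembling the well-definedness, the biconditionals and the automatic monotonicity gives both Galois connections.

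I expect the only nontrivial point to be the closure of $l'_U(Y)$ under subtraction, i.e. the inclusion ${\rm Im}(f-g)\subseteq{\rm Im}(f)+{\rm Im}(g)$, which is where the ambient abelian structure is genuinely used; everything else is a formal manipulation of the defining conditions. A secondary matter to keep honest is the existence of the arbitrary intersections $\bigcap_{f\in Z}{\rm Ker}(f)$ and sums $\sum_{f\in Z}{\rm Im}(f)$ defining $r_M$ and $r'_N$, which is guaranteed in the Grothendieck, module, and comodule categories where the theorem is applied.
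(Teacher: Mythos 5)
Your proof is correct, and it is essentially the argument the paper implicitly invokes: the paper gives no proof of this theorem at all, merely asserting that Propositions~3.1, 3.2 and 3.4 of Albu--N\u ast\u asescu extend from modules to abelian categories, and your direct verification --- the definitional unwinding of the two biconditionals (with the order reversal in $L(M)^{\rm op}$ handled correctly), monotonicity deduced automatically from them, and the one genuinely categorical point that $f-g$ factors through the monomorphism ${\rm Im}(f)+{\rm Im}(g)\hookrightarrow N$, so that $l'_U(Y)$ is closed under differences --- is exactly the routine extension that citation stands in for. Your closing caveat is also warranted: the theorem is stated for arbitrary abelian categories, where the arbitrary intersections $\bigcap_{f\in Z}{\rm Ker}(f)$ and sums $\sum_{f\in Z}{\rm Im}(f)$ need not exist, a point the paper glosses over and which is only guaranteed in the (Grothendieck, module, comodule) settings where the result is actually applied.
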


Recall the following concepts adapted from module theory \cite{CLVW,DHSW} (extending and lifting modules), 
\cite{Atani,WangY} (strongly extending and strongly lifting modules). 

\begin{defn} \rm Let $\mathcal{A}$ be an abelian category, $M$ an object of $\mathcal{A}$ and $K$ a subobject of
$M$.

Then $K$ is called:
\begin{enumerate}
\item an \emph{essential} subobject of $M$ if for any subobject $X$ of $M$, $K\cap X=0$ implies $X=0$. 
\item a \emph{superfluous} subobject of $M$ if for any subobject $X$ of $M$, $K+X=M$ implies $X=M$. 
\end{enumerate}

Then $M$ is called:
\begin{enumerate}
\item[(3)] \emph{(strongly) extending} if every subobject of $M$ is essential in a (fully invariant) direct summand of $M$.
\item[(4)] \emph{(strongly) lifting} if every subobject $L$ of $M$ contains a (fully invariant) direct summand $K$ of $M$ 
such that $L/K$ is superfluous in $M/K$. 
\end{enumerate}
\end{defn}

Next we introduce some relative versions of some notions which were used, under some different names, in the theory of
(dual) Baer modules \cite{KT,RR04}, and which were slightly modified in \cite{GO} to fit the theory of Baer-Galois
connections.

\begin{defn} \rm \cite[Definition~9.4]{CK} Let $M$ and $N$ be objects of an abelian category $\mathcal{A}$. 

Then $N$ is called:
\begin{enumerate}
\item \emph{$M$-$\mathcal{K}$-nonsingular} if for any morphism $f:M\to N$ in $\mathcal{A}$, ${\rm Ker}(f)$ essential in
$M$ implies $f=0$. 
\item \emph{$M$-$\mathcal{K}$-cononsingular} if for any subobject $X$ of $M$, $l_U(X)=0 $ implies that $X$ is
essential in $M$.  
\end{enumerate}

Then $M$ is called:
\begin{enumerate}
\item[(3)] \emph{$N$-$\mathcal{T}$-nonsingular} if for any morphism $f:M\to N$ in $\mathcal{A}$, ${\rm Im}(f)$ superfluous in
$N$ implies $f=0$. 
\item[(4)] \emph{$N$-$\mathcal{T}$-cononsingular} if for any subobject $Y$ of $N$, $l_U'(Y)=0$ implies that $Y$ is
superfluous in $M$.    
\end{enumerate}
\end{defn}

Now we may establish a result connecting the (dual) strongly relative Baer property to 
the strongly extending (strongly lifting) property. It is similar to \cite[Theorem~9.5]{CK} 
(also see \cite[Corollaries 3.1 and 3.2]{GO}), which relates the (dual) relative Baer property to the extending (lifting) property.

\begin{theo} \label{t:khuri} Let $M$ and $N$ be objects of an abelian category $\mathcal{A}$. 
\begin{enumerate}
\item Assume that there exists the product $N^I$ for every set $I$. Then the following are equivalent:
\begin{enumerate}[(i)]
\item $N$ is strongly $M$-Baer and $M$-$\mathcal{K}$-cononsingular.
\item $M$ is strongly extending and $N$ is $M$-$\mathcal{K}$-nonsingular.
\end{enumerate}
\item Assume that there exists the coproduct $M^{(I)}$ for every set $I$. Then the following are equivalent:
\begin{enumerate}[(i)]
\item $N$ is dual strongly $M$-Baer and $M$ is $N$-$\mathcal{T}$-cononsingular.
\item $M$ is strongly lifting and $N$-$\mathcal{T}$-nonsingular.
\end{enumerate}
\end{enumerate}
\end{theo}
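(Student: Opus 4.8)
The plan is to prove part (1); part (2) then follows by the duality principle, as the authors have announced for all their two-part statements. The strategy is to reduce the strong statement to the already-established non-strong analogue, namely \cite[Theorem~9.5]{CK}, by peeling off the word ``strongly'' via the weak-duo characterization and then re-attaching it at the end. The key structural observation is that Lemma~\ref{l:Bwduo}(1) lets me write ``$N$ is strongly $M$-Baer'' as ``$N$ is $M$-Baer and $M$ is weak duo,'' and that the definition of \emph{strongly extending} is ``every subobject is essential in a \emph{fully invariant} direct summand,'' which should decompose as ``$M$ is extending and $M$ is weak duo.'' So both sides of the claimed equivalence secretly carry the same extra hypothesis, $M$ weak duo, and once that is factored out what remains is exactly the Khuri-type equivalence of \cite[Theorem~9.5]{CK}.

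Concretely, I would proceed as follows. First I would prove the auxiliary fact that $M$ is strongly extending if and only if $M$ is extending and weak duo. The forward direction is immediate since a fully invariant direct summand is in particular a direct summand, and the weak-duo property follows because every direct summand $K$ of $M$ is essential in itself, hence (being its own fully-invariant hull) is fully invariant. For the converse, if $M$ is extending then every subobject $X$ sits essentially in a direct summand $K$, and weak duo forces that summand to be fully invariant. Second, to apply Lemma~\ref{l:Bwduo}(1) and \cite[Theorem~9.5]{CK} I must verify their hidden hypothesis, that every direct summand of $M$ is isomorphic to a subobject of $N$; this is where the assumption of products $N^I$ together with $M$-$\mathcal{K}$-cononsingularity (resp.\ $M$-$\mathcal{K}$-nonsingularity) must be used, since without some link between $M$ and $N$ the two sides are not comparable.

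With these pieces in place the main argument is a short chain. Assuming (i), I rewrite ``$N$ strongly $M$-Baer'' as ``$N$ is $M$-Baer and $M$ is weak duo'' via Lemma~\ref{l:Bwduo}(1); then \cite[Theorem~9.5]{CK} applied to the pair ($N$ $M$-Baer and $M$-$\mathcal{K}$-cononsingular) yields $M$ extending and $N$ $M$-$\mathcal{K}$-nonsingular; finally $M$ extending plus $M$ weak duo gives $M$ strongly extending by the auxiliary fact, which is (ii). The reverse direction reverses each step: $M$ strongly extending splits as $M$ extending and weak duo, \cite[Theorem~9.5]{CK} returns $N$ $M$-Baer and $M$-$\mathcal{K}$-cononsingular, and $N$ $M$-Baer together with $M$ weak duo reassembles into $N$ strongly $M$-Baer via Lemma~\ref{l:Bwduo}(1).

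The main obstacle I anticipate is \emph{bookkeeping the comparability hypothesis} of Lemma~\ref{l:Bwduo} and \cite[Theorem~9.5]{CK}, namely ``every direct summand of $M$ is isomorphic to a subobject of $N$.'' In the Khuri framework this condition is what makes the Galois connection $(r_M,l_U)$ behave well, and it is not stated as a standing hypothesis in Theorem~\ref{t:khuri}; I expect that either it is automatic once $N^I$ exists and the cononsingularity/nonsingularity conditions hold, or that it must be threaded through carefully so that the intermediate non-strong results are legitimately applicable. Verifying that the fully-invariant-summand data matches on both sides — i.e.\ that the \emph{same} summand witnessing ``essential in a direct summand'' is the one rendered fully invariant by weak duo — is the one place where I would slow down and check that no summand is silently replaced when passing through \cite[Theorem~9.5]{CK}.
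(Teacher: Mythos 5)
Your reduction has a genuine gap in the direction $(i)\Rightarrow(ii)$. To conclude that $M$ is strongly extending you must first extract ``$M$ is weak duo'' from (i), and your only tool for this is Lemma~\ref{l:Bwduo}(1), whose standing hypothesis --- every direct summand of $M$ is isomorphic to a subobject of $N$ --- is \emph{not} among the hypotheses of Theorem~\ref{t:khuri}. (Nor is it a hypothesis of \cite[Theorem~9.5]{CK}, to which you also attribute it; that theorem needs only the existence of the products $N^I$.) You flag this obstacle yourself and hope the condition is ``automatic,'' but you never prove it, and it cannot follow from strong Baerness alone: for $N=0$ and $M=k^2$ a two-dimensional vector space, $N$ is strongly $M$-Baer while $M$ is not weak duo. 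So any derivation of weak duo must genuinely combine strong Baerness with $M$-$\mathcal{K}$-cononsingularity, and the natural way to do that is to show, for a direct summand $K$ of $M$, that $K=r_M(l_U(K))$: then $K$ is an intersection of kernels of morphisms $M\to N$, hence a fully invariant direct summand by strong Baerness. But proving $K=r_M(l_U(K))$ requires exactly the essentiality argument of \cite[Theorem~9.5]{CK} ($K$ is essential in the direct summand $r_M(l_U(K))$, and an essential direct summand must be the whole object). In other words, filling your gap amounts to redoing the direct proof, so the reduction buys nothing.

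The paper's proof sidesteps weak duo entirely, and this is the idea you are missing: fully invariance comes for free from the Galois connection, not from a global property of $M$. For $(i)\Rightarrow(ii)$, given a subobject $L$, the paper forms $K=r_M(l_U(L))$, which is \emph{by construction} an intersection of kernels of morphisms $M\to N$; strong Baerness then says directly that this particular $K$ is a fully invariant direct summand, and the argument of \cite[Theorem~9.5]{CK} (using cononsingularity) shows $L$ is essential in $K$. For $(ii)\Rightarrow(i)$, given $f:M\to N^I$, strong extending hands you a fully invariant direct summand $L$ in which $\mathrm{Ker}(f)$ is essential, $M$-$\mathcal{K}$-nonsingularity forces $\mathrm{Ker}(f)=L$, and then $N^I$ is strongly $M$-Rickart for every set $I$, so $N$ is strongly $M$-Baer by Lemma~\ref{l:BR}. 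Your direction $(ii)\Rightarrow(i)$ is, for what it is worth, essentially sound as written: strongly extending gives extending plus weak duo, \cite[Theorem~9.5]{CK} gives $N$ $M$-Baer and cononsingular, and reassembling via the trivial half of Lemma~\ref{l:Bwduo} needs no embedding hypothesis. The failure is confined to $(i)\Rightarrow(ii)$, but it is precisely the half where the real content of the theorem lies.
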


\begin{proof} (1) $(i)\Rightarrow(ii)$ Assume that $N$ is strongly $M$-Baer and $M$-$\mathcal{K}$-cononsingular.
Let $L$ be a subobject of $M$. Denote $$K=r_M(l_U(L))=\bigcap\{{\rm Ker}(f)\mid f\in U \textrm{ and } L\subseteq {\rm
Ker}(f)\}.$$ Clearly, we have $L\subseteq K$. Since $N$ is strongly $M$-Baer, 
$K$ is a fully invariant direct summand of $M$, say $M=K\oplus K'$ for some subobject $K'$ of $M$. Now one shows that 
$L$ is essential in $K$ as in the proof of \cite[Theorem~9.5]{CK}. Hence $L$ is essential in 
the fully invariant direct summand $K$ of $M$, and so $M$ is strongly extending.
Finally, $N$ is $M$-$\mathcal{K}$-nonsingular also by \cite[Theorem~9.5]{CK}. 

$(ii)\Rightarrow(i)$ Assume that $M$ is strongly extending and $N$ is $M$-$\mathcal{K}$-nonsingular.
Let $I$ be a set, and let $f:M\to N^I$ be a morphism in $\mathcal{A}$. Since $M$ is strongly extending, ${\rm Ker}(f)$ is
essential in a fully invariant direct summand $L$ of $M=L\oplus L'$. Now one shows that ${\rm Ker}(f)=L$ 
as in the proof of \cite[Theorem~9.5]{CK}. Then ${\rm Ker}(f)$ is a fully invariant direct summand of $M$. 
Hence $N^I$ is strongly $M$-Rickart. It follows that $N$ is strongly $M$-Baer by Lemma~\ref{l:BR}.
Finally, $N$ is $M$-$\mathcal{K}$-cononsingular also by \cite[Theorem~9.5]{CK}.
\end{proof}

\begin{coll} Let $M$ be an object of an abelian category $\mathcal{A}$. 
\begin{enumerate}
\item Assume that there exists the product $M^I$ for every set $I$. Then the following are equivalent:
\begin{enumerate}[(i)]
\item $M$ is strongly self-Baer and $M$-$\mathcal{K}$-cononsingular.
\item $M$ is strongly extending and $M$-$\mathcal{K}$-nonsingular.
\end{enumerate}
\item Assume that there exists the coproduct $M^{(I)}$ for every set $I$. Then the following are equivalent:
\begin{enumerate}[(i)]
\item $M$ is dual strongly self-Baer and $M$-$\mathcal{T}$-cononsingular.
\item $M$ is strongly lifting and $M$-$\mathcal{T}$-nonsingular.
\end{enumerate}
\end{enumerate}
\end{coll}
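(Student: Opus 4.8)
The plan is to derive this corollary directly from Theorem~\ref{t:khuri} by specializing the two objects to a single one, namely by setting $N=M$. First I would recall the relevant definitions: $M$ is strongly self-Baer precisely when $M$ is strongly $M$-Baer, and $M$ is dual strongly self-Baer precisely when $M$ is dual strongly $M$-Baer. Consequently the standing hypothesis of part (1), that $M^I$ exists for every set $I$ (resp.\ of part (2), that $M^{(I)}$ exists), is exactly the hypothesis of Theorem~\ref{t:khuri}(1) (resp.\ (2)) read with $N=M$.

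For part (1), substituting $N=M$ into Theorem~\ref{t:khuri}(1) turns condition $(i)$ into ``$M$ is strongly self-Baer and $M$-$\mathcal{K}$-cononsingular'' and condition $(ii)$ into ``$M$ is strongly extending and $M$ is $M$-$\mathcal{K}$-nonsingular'', which is precisely the asserted equivalence. The notions of $M$-$\mathcal{K}$-cononsingularity and of being strongly extending depend only on $M$, so they are unchanged by the substitution, while $M$-$\mathcal{K}$-nonsingularity of $N$ becomes $M$-$\mathcal{K}$-nonsingularity of $M$. Part (2) follows in the same manner from Theorem~\ref{t:khuri}(2) with $N=M$, or equivalently from part (1) by the duality principle in abelian categories.

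I do not expect any genuine obstacle here: all the substantive work is already contained in Theorem~\ref{t:khuri}, and the corollary is a pure specialization. The only thing worth verifying is the bookkeeping that each relative condition reduces to the intended self-condition when the source and target objects coincide, together with the matching of the product (coproduct) existence hypotheses. Hence the proof reduces to a single line invoking Theorem~\ref{t:khuri} with $N=M$.
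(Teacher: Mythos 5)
Your proposal is correct and matches the paper's own proof, which for part (1) simply states that the result follows by Theorem~\ref{t:khuri} (i.e.\ the specialization $N=M$), with part (2) handled dually. The paper additionally sketches an alternative route via Corollary~\ref{c:Bwduo} (strongly self-Baer $\Leftrightarrow$ self-Baer and weak duo, strongly extending $\Leftrightarrow$ extending and weak duo) combined with \cite[Theorem~9.5]{CK}, but that is optional and your one-line argument suffices.
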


\begin{proof} (1) This follows by Theorem \ref{t:khuri}. Alternatively, note that $M$ is strongly self-Baer 
if and only if $M$ is self-Baer and weak duo by Corollary \ref{c:Bwduo}, 
while $M$ is strongly extending if and only if $M$ is extending and weak duo. 
Now the result follows by \cite[Theorem~9.5]{CK}.
\end{proof}

\section{(Co)products of (dual) strongly relative Baer objects}

We continue to apply our theory of (dual) strongly relative Rickart objects in order to obtain corresponding results for 
(co)products of (dual) strongly relative Baer objects.

\begin{coll} Let $\mathcal{A}$ be an abelian category.
\begin{enumerate} 
\item Let $M$ and $N_1,\dots,N_n$ be objects of $\mathcal{A}$. Then $\bigoplus_{i=1}^n N_i$ is strongly $M$-Baer if and only if
$N_i$ is strongly $M$-Baer for every $i\in \{1,\dots,n\}$.
\item Let $M_1,\dots,M_n$ and $N$ be objects of $\mathcal{A}$. Then $N$ is dual strongly $\bigoplus_{i=1}^n M_i$-Baer if and only
if $N$ is dual strongly $M_i$-Baer for every $i\in \{1,\dots,n\}$.
\end{enumerate}
\end{coll}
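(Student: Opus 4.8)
The plan is to prove part (1) directly and to deduce part (2) from it by the duality principle in abelian categories, as is done throughout the paper. For the forward implication of part (1) I would simply invoke Corollary~\ref{c:dsbaer}(1): each $N_i$ is a direct summand of $\bigoplus_{j=1}^n N_j$ and $M$ is trivially a direct summand of itself, so the assumption that $\bigoplus_{j=1}^n N_j$ is strongly $M$-Baer yields at once that each $N_i$ is strongly $M$-Baer.

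For the converse, let $(f_k)_{k\in K}$ be an arbitrary family of morphisms $f_k\colon M\to \bigoplus_{i=1}^n N_i$, and let $\pi_i\colon \bigoplus_{i=1}^n N_i\to N_i$ be the canonical projections. Since a finite coproduct is a finite product in $\mathcal{A}$, one has ${\rm Ker}(f_k)=\bigcap_{i=1}^n {\rm Ker}(\pi_i f_k)$ for each $k$, and therefore
\[
\bigcap_{k\in K}{\rm Ker}(f_k)=\bigcap_{i=1}^n\Big(\bigcap_{k\in K}{\rm Ker}(\pi_i f_k)\Big).
\]
For each fixed $i$ the family $(\pi_i f_k)_{k\in K}$ consists of morphisms $M\to N_i$, so the hypothesis that $N_i$ is strongly $M$-Baer makes $K_i:=\bigcap_{k\in K}{\rm Ker}(\pi_i f_k)$ a fully invariant direct summand of $M$. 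It then remains to show that the finite intersection $\bigcap_{i=1}^n K_i$ is again a fully invariant direct summand of $M$.

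This last step is the crux of the argument and the one place where the fully invariant hypothesis is indispensable: a finite intersection of ordinary direct summands need not be a direct summand, which is precisely why the statement carries no additional condition (SSIP, direct injectivity, etc.) unlike the Rickart or regular analogues. I would establish the claim for two summands and then induct. Given two fully invariant direct summands $eM$ and $fM$, with $e,f\in {\rm End}_{\mathcal{A}}(M)$ idempotents, the full invariance of $fM$ forces $e$ to map $fM$ into $fM$, so the restriction $e|_{fM}$ is an idempotent endomorphism of $fM$; since idempotents split in an abelian category, its image is a direct summand of $fM$, hence of $M$. A direct check identifies this image with $eM\cap fM$, and an intersection of fully invariant subobjects is fully invariant, so $eM\cap fM$ is a fully invariant direct summand of $M$. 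Applying this to $K_1,\dots,K_n$ shows $\bigcap_{k\in K}{\rm Ker}(f_k)$ is a fully invariant direct summand of $M$, i.e. $\bigoplus_{i=1}^n N_i$ is strongly $M$-Baer.

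Finally, part (2) follows from part (1) by the duality principle: passing to the opposite category interchanges the strongly $M$-Baer and dual strongly $M$-Baer notions and swaps source and target, while in the finite case products and coproducts coincide, so the statement about $\bigoplus_{i=1}^n M_i$ is exactly the dual of part (1). The only genuine obstacle is the finite-intersection lemma for fully invariant direct summands sketched above; the reduction via the projections and the duality step are routine. (If one were willing to assume the existence of products, an alternative route through Lemma~\ref{l:BR} and the finite direct sum behaviour of strongly relative Rickart objects is available, but the direct argument has the advantage of requiring no product hypothesis.)
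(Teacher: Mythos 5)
Your proposal is correct, but it takes a genuinely different route from the paper's. The paper proves part (1) by reducing the strong Baer property to the strong Rickart property of powers: by Lemma~\ref{l:BR}, $\bigoplus_{i=1}^n N_i$ is strongly $M$-Baer if and only if $\bigl(\bigoplus_{i=1}^n N_i\bigr)^I\cong \bigoplus_{i=1}^n N_i^I$ is strongly $M$-Rickart for every set $I$, and then it invokes the finite direct sum theorem for strongly Rickart objects from \cite{CO1}; part (2) is left to duality, exactly as you do. You instead argue directly from the definition: kernels are decomposed componentwise through the projections, and the crux is your lemma that a finite intersection of fully invariant direct summands is again a fully invariant direct summand (ring-theoretically: if $e,f$ are left semicentral idempotents then $ef$ is an idempotent whose image is $eM\cap fM$). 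That lemma and your proof of it are correct, and it is precisely where full invariance is indispensable, since finite intersections of ordinary direct summands can fail to be direct summands. What your route buys: it requires no existence of infinite products, whereas the paper's appeal to Lemma~\ref{l:BR} tacitly assumes the products $N_i^I$ exist --- a hypothesis that does not appear in the statement of the corollary --- so your argument actually establishes the result in the stated generality; it also handles the existence of the arbitrary intersection $\bigcap_{k}\mathrm{Ker}(f_k)$ cleanly, by exhibiting it as the finite intersection $\bigcap_{i=1}^n K_i$ of objects whose existence is guaranteed by the hypotheses. What the paper's route buys: brevity, given the machinery already developed, and consistency with the power-of-$N$ reduction used throughout Sections 5--7. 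Your use of Corollary~\ref{c:dsbaer} for the forward implication and of the opposite-category argument for part (2) (under which strongly Baer and dual strongly Baer interchange and finite biproducts are self-dual) are both in order.
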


\begin{proof} (1) By \cite[Theorem~3.1]{CO1} and Lemma~\ref{l:BR}, $\bigoplus_{i=1}^n N_i$ is strongly $M$-Baer if and only if 
$(\bigoplus_{i=1}^n N_i)^I\cong \bigoplus_{i=1}^n N_i^I$ is strongly $M$-Rickart for every set $I$ if and only if 
$N_1^I,\dots,N_n^I$ are strongly $M$-Rickart for every set $I$ if and only if $N_1,\dots,N_n$ are strongly $M$-Baer.
\end{proof}

\begin{lemm} \label{l:selfbaer} Let $\mathcal{A}$ be an abelian category, and let $(M_i)_{i\in I}$ be a family of
objects of $\mathcal{A}$.
\begin{enumerate} 
\item Assume that $\prod_{i\in I} M_i$ is strongly self-Baer. 
Then $M_i$ is strongly self-Baer and strongly $M_j$-Rickart for every $i,j\in I$. 
\item Assume that $\bigoplus_{i\in I} M_i$ is dual strongly self-Baer. 
Then $M_i$ is dual strongly self-Baer and dual strongly $M_j$-Rickart for every $i,j\in I$.
\end{enumerate} 
\end{lemm}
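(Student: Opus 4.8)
The plan is to prove only part (1); part (2) then follows by the duality principle, interchanging the product with the coproduct, kernels with images, and the strongly Baer/Rickart notions with their duals. Throughout set $P=\prod_{i\in I}M_i$, and note that $P$ being strongly self-Baer means exactly that $P$ is strongly $P$-Baer.

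First I would realise each $M_i$ as a direct summand of $P$. Writing $\pi_j\colon P\to M_j$ for the canonical projections, fix $i\in I$ and apply the universal property of the product to the family consisting of $1_{M_i}\colon M_i\to M_i$ together with the zero morphisms $M_i\to M_j$ for $j\neq i$; this produces a morphism $\iota_i\colon M_i\to P$ with $\pi_i\iota_i=1_{M_i}$. Hence $\iota_i$ is a section and $M_i$ is, up to isomorphism, a direct summand of $P$. Since the strongly relative Baer and Rickart properties are invariant under isomorphism, I may treat each $M_i$ as a genuine direct summand of $P$.

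The heart of the argument is then a single invocation of summand inheritance. Since $P$ is strongly $P$-Baer and both $M_i$ and $M_j$ are direct summands of $P$, Corollary~\ref{c:dsbaer}(1) gives that $M_i$ is strongly $M_j$-Baer for all $i,j\in I$. Taking $j=i$ yields that $M_i$ is strongly $M_i$-Baer, that is, strongly self-Baer. Finally, restricting the defining condition of ``strongly $M_j$-Baer'' to the singleton families $(f)$ with $f\in\Hom_{\mathcal{A}}(M_j,M_i)$ shows that each $\mathrm{Ker}(f)$ is a fully invariant direct summand of $M_j$, i.e.\ $M_i$ is strongly $M_j$-Rickart. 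This yields both conclusions.

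The one step that deserves care is the passage to summands for an \emph{infinite} product: for finite (co)products the factors are summands automatically, whereas here the section $\iota_i$ must be extracted from the universal property. If one wishes to avoid any appeal to further products in $\mathcal{A}$ when justifying Corollary~\ref{c:dsbaer}(1), the descent can be carried out by hand: given a family $(f_k)$ in $\Hom_{\mathcal{A}}(M_j,M_i)$, set $\widetilde{f_k}=\iota_i f_k\pi_j\colon P\to P$, so that $\bigcap_k\mathrm{Ker}(\widetilde{f_k})=\big(\bigcap_k\mathrm{Ker}(f_k)\big)\oplus M_j'$, where $M_j'=\mathrm{Ker}(\pi_j)$ is a complement of $M_j$ in $P$. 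Strong $P$-Baerness makes this a fully invariant direct summand of $P$, and testing full invariance against the idempotent $\iota_j\pi_j$ that projects $P$ onto $M_j$ descends both the summand property and the full invariance to $\bigcap_k\mathrm{Ker}(f_k)$ inside $M_j$.
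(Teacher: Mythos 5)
Your proof is correct, and its core is the same as the paper's: realize each $M_i$ (up to isomorphism) as a direct summand of $P=\prod_{i\in I}M_i$, use inheritance of the strong Baer property by direct summands to get that $M_i$ is strongly $M_j$-Baer for all $i,j$ (hence strongly self-Baer for $j=i$), and then pass from strongly $M_j$-Baer to strongly $M_j$-Rickart. The differences are in the supporting citations: the paper quotes Corollary~\ref{c:epimonobaer} for the inheritance step and Theorem~\ref{t:SS} for the Baer-to-Rickart step, whereas you quote Corollary~\ref{c:dsbaer} and then simply restrict the Baer condition to singleton families. Your choices are in fact better matched to the hypotheses at hand: Corollary~\ref{c:epimonobaer}(1) formally assumes that $\mathcal{A}$ has all products, and Theorem~\ref{t:SS}(1) assumes the existence of the products $N^I$ together with a subobject condition, none of which is granted by the lemma (only the single product $\prod_{i\in I}M_i$ is assumed to exist); the implications actually needed hold without these assumptions, so the paper's proof is sound in substance, but your singleton-family observation is hypothesis-free, and your by-hand descent argument (extending a family $(f_k)$ on $M_j$ to $\iota_i f_k\pi_j$ on $P$, then cutting the resulting fully invariant summand back down with the idempotent $\iota_j\pi_j$ and the modular law) shows that summand inheritance of strong Baerness requires no products at all. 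So you have not changed the route, but your execution closes a small gap between the lemma's hypotheses and those of the results the paper cites.
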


\begin{proof} (1) Assume that $\prod_{i\in I} M_i$ is strongly self-Baer. 
Then $M_i$ is strongly self-Baer and strongly $M_j$-Baer for every $i,j\in I$ by Corollary~\ref{c:epimonobaer}. 
By Theorem~\ref{t:SS}, $M_i$ is strongly $M_j$-Rickart for every $i,j\in I$.
\end{proof}

\begin{theo} Let $M_1,\dots,M_n$ be objects of an abelian category $\mathcal{A}$.
\begin{enumerate}
\item Assume that $M_i$ is direct $M_j$-injective for every $i,j\in \{1,\dots,n\}$. Then $\bigoplus_{i=1}^n M_i$ is
strongly self-Baer if and only if $M_i$ is strongly self-Baer and strongly $M_j$-Rickart for every $i,j\in \{1,\dots,n\}$.
\item Assume that $M_i$ is direct $M_j$-projective for every $i,j\in \{1,\dots,n\}$. Then
$\bigoplus_{i=1}^n M_i$ is dual strongly self-Baer if and only if $M_i$ is dual strongly self-Baer and 
dual strongly $M_j$-Rickart for every $i,j\in \{1,\dots,n\}$.
\end{enumerate}
\end{theo}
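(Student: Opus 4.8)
The plan is to establish part (1) and to deduce part (2) by the duality principle, as throughout the paper. Set $Q=\bigoplus_{i=1}^n M_i$, and note that since the index set is finite, $Q$ also coincides with the product $\prod_{i=1}^n M_i$.

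For the forward implication, suppose $Q$ is strongly self-Baer. Viewing $Q=\prod_{i=1}^n M_i$, Lemma~\ref{l:selfbaer} applies directly and yields at once that each $M_i$ is strongly self-Baer and that $M_i$ is strongly $M_j$-Rickart for all $i,j\in\{1,\dots,n\}$. This implication uses neither the direct $M_j$-injectivity hypothesis nor anything further; that hypothesis enters only in the converse.

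For the converse, assume that $M_i$ is strongly self-Baer, strongly $M_j$-Rickart and direct $M_j$-injective for all $i,j$. The key idea is to route through the strong regular theory in order to extract an orthogonality relation between the summands. Since $M_i$ is strongly $M_j$-Rickart and direct $M_j$-injective, Theorem~\ref{t:char} shows that $M_i$ is strongly $M_j$-regular for all $i,j$; Theorem~\ref{t:dsreg} then gives that $Q$ is strongly self-regular; and Theorem~\ref{t:homzero-reg} finally yields ${\rm Hom}_{\mathcal{A}}(M_i,M_j)=0$ for all $i\neq j$. I expect this Hom-vanishing to be the crux of the proof: it is exactly what makes every morphism with domain $Q$ ``diagonal'' along the decomposition $Q=\bigoplus_{i=1}^n M_i$, and without it the subsequent decomposition of kernels would break down.

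With the orthogonality in hand, I would verify the strongly self-Baer property directly. Let $(f_k)_{k\in K}$ be a family of endomorphisms of $Q$. The vanishing of the off-diagonal Hom groups forces each $f_k$ to split as $\bigoplus_{i=1}^n f_{k,i}$ with $f_{k,i}\in{\rm End}_{\mathcal{A}}(M_i)$, whence ${\rm Ker}(f_k)=\bigoplus_{i=1}^n {\rm Ker}(f_{k,i})$. Using the projections $\pi_i:Q\to M_i$ together with the routine fact that every subobject $S$ of $Q$ satisfies $S\subseteq\bigoplus_{i=1}^n\pi_i(S)$, one obtains $\bigcap_{k\in K}{\rm Ker}(f_k)=\bigoplus_{i=1}^n K_i$, where $K_i=\bigcap_{k\in K}{\rm Ker}(f_{k,i})$. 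Since each $M_i$ is strongly self-Baer, $K_i$ is a fully invariant direct summand of $M_i$; summing over $i$ then exhibits $\bigcap_{k\in K}{\rm Ker}(f_k)$ as a direct summand of $Q$, and its full invariance in $Q$ is immediate from the diagonal form of ${\rm End}_{\mathcal{A}}(Q)$, which maps each $K_i$ into itself. Hence $Q$ is strongly self-Baer. (Alternatively, once the Hom-vanishing is established and the products $Q^I$ exist, one may invoke Lemma~\ref{l:BR} to reduce to showing that $Q^I\cong\bigoplus_{i=1}^n M_i^I$ is strongly $Q$-Rickart, where the same diagonal argument applies to a single morphism $Q\to Q^I$, bypassing the decomposition of an infinite intersection.) Part (2) follows by duality, replacing kernels, direct injectivity and the strongly self-Baer property by cokernels, direct projectivity and the dual strongly self-Baer property.
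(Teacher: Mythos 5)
Your proof is correct, and its second half takes a genuinely different route from the paper's. Both proofs handle the forward implication identically (Lemma~\ref{l:selfbaer}) and both begin the converse the same way, combining Theorem~\ref{t:char} with the direct injectivity hypothesis to get that $M_i$ is strongly $M_j$-regular, and Theorem~\ref{t:dsreg} to get that $Q=\bigoplus_{i=1}^n M_i$ is strongly self-regular. From there the paper stays inside the Rickart/Baer machinery: it notes $Q$ is strongly self-Rickart, invokes \cite[Theorem~7.3]{CK} to get that $Q$ is self-Baer, \cite[Corollary~6.4]{CK} to get SSIP, and concludes by Corollary~\ref{c:BR}. You instead extract the orthogonality ${\rm Hom}_{\mathcal{A}}(M_i,M_j)=0$ ($i\neq j$) from Theorem~\ref{t:homzero-reg} and finish with a direct diagonal-decomposition argument: every endomorphism of $Q$ is diagonal, so intersections of kernels decompose as $\bigoplus_i K_i$ with each $K_i$ a fully invariant direct summand of $M_i$, and full invariance in $Q$ is automatic from the diagonal form of ${\rm End}_{\mathcal{A}}(Q)$. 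Your route buys something concrete: Corollary~\ref{c:BR} (and Lemma~\ref{l:BR}, which you correctly relegate to a parenthetical alternative) requires the existence of the products $Q^I$ for all sets $I$, a hypothesis not present in the theorem's statement, so the paper's proof implicitly assumes more than it states; your argument is self-contained and valid in an arbitrary abelian category. What it costs is length and a small amount of care (the lattice identity $\bigcap_k\bigoplus_i A_{k,i}=\bigoplus_i\bigcap_k A_{k,i}$ via $S\subseteq\bigoplus_i\pi_i(S)$, which you correctly flag and which also settles the existence of the intersection in $Q$), whereas the paper's proof is shorter by reusing the cited machinery.
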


\begin{proof} (1) The direct implication follows by Lemma~\ref{l:selfbaer}.

Conversely, assume that $M_i$ is strongly self-Baer and strongly $M_j$-Rickart for every $i,j\in \{1,\dots,n\}$. 
Since $M_i$ is direct $M_j$-injective, it follows that $M_i$ is strongly $M_j$-regular for every $i,j\in \{1,\dots,n\}$ 
by Theorem~\ref{t:char}. Furthermore, $\bigoplus_{i=1}^n M_i$ is strongly self-regular by Theorem~\ref{t:dsreg}, 
and so $\bigoplus_{i=1}^n M_i$ is strongly self-Rickart by Proposition~\ref{p:Mstrreg}.
Since $M_i$ is self-Baer and $M_j$-Rickart for every $i,j\in \{1,\dots,n\}$, 
it follows that $\bigoplus_{i=1}^n M_i$ is self-Baer by \cite[Theorem~7.3]{CK}. 
Then $\bigoplus_{i=1}^n M_i$ has SSIP by \cite[Corollary~6.4]{CK}. 
Finally, $\bigoplus_{i=1}^n M_i$ is strongly self-Baer by Corollary~\ref{c:BR}.
\end{proof}

\begin{coll} Let $M_1,\dots,M_n$ be objects of an abelian category $\mathcal{A}$ such that $M_i$ is strongly $M_j$-regular for
every $i,j\in \{1,\dots,n\}$. Then $\bigoplus_{i=1}^n M_i$ is (dual) strongly self-Baer if and only if 
$M_i$ is (dual) strongly self-Baer for every $i\in \{1,\dots,n\}$.  
\end{coll}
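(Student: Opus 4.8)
The plan is to prove this corollary by combining the preceding theorem with the fundamental fact, established in Corollary~\ref{c:BR} and used throughout Section~6, that the strongly self-Baer property decomposes into a strongly self-Rickart condition plus the strong summand intersection property (SSIP). The crucial hypothesis is that each $M_i$ is strongly $M_j$-regular, which is much stronger than the one-sided Rickart/injectivity hypotheses of the previous theorem, and it is this strength that should let us collapse the ``strongly self-Baer and strongly $M_j$-Rickart'' requirement down to just ``strongly self-Baer''.

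First I would handle the easier converse direction (self-Baer components imply the coproduct is self-Baer). Assuming each $M_i$ is strongly self-Baer and strongly $M_j$-regular for all $i,j$, I would invoke Theorem~\ref{t:dsreg} to conclude that $\bigoplus_{i=1}^n M_i$ is strongly self-regular, hence strongly self-Rickart by Proposition~\ref{p:Mstrreg} and Theorem~\ref{t:char}. The remaining task is to verify SSIP for the coproduct; here I would note that strong $M_j$-regularity gives in particular strong $M_j$-Rickartness, so the hypotheses of the previous theorem (part~(1)) are met once direct injectivity is read off from Theorem~\ref{t:char}. Concretely, $M_i$ strongly $M_j$-regular implies $M_i$ is both strongly $M_j$-Rickart and $M_i$ is direct $M_j$-injective, so the previous theorem applies directly and yields that $\bigoplus_{i=1}^n M_i$ is strongly self-Baer. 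Thus the regularity hypothesis is precisely what lets us \emph{drop} the separate ``strongly $M_j$-Rickart'' clause from the statement of the previous theorem, since regularity already supplies it.

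For the forward direction, assuming $\bigoplus_{i=1}^n M_i$ is strongly self-Baer, I would apply Lemma~\ref{l:selfbaer}(1) (or simply Corollary~\ref{c:dsbaer}) to extract that each direct summand $M_i$ is strongly self-Baer. This direction does not even require the regularity hypothesis and follows purely from the passage to direct summands. The dual statement, concerning dual strongly self-Baer objects and direct $M_j$-projectivity, then follows by the duality principle in abelian categories, exactly as the paper handles all its paired results.

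The main obstacle I anticipate is bookkeeping around the relationship between the relative regularity hypothesis and the two halves of the decomposition in Corollary~\ref{c:BR}: one must be careful to check that strong $M_j$-regularity of the \emph{family} delivers both the direct $M_j$-injectivity needed to apply the previous theorem and the strong self-Rickartness of the full coproduct. Since Theorem~\ref{t:char} packages regularity as strong Rickartness plus direct injectivity, this step is essentially a translation rather than a genuine difficulty, so I expect the proof to reduce to a short citation chain: Lemma~\ref{l:selfbaer} for the forward direction, and Theorem~\ref{t:char} together with the previous theorem for the converse.
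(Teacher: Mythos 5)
Your proposal is correct and takes essentially the same route the paper intends: the corollary follows immediately from the preceding theorem once Theorem~\ref{t:char} unpacks strong $M_j$-regularity into strong Rickartness plus direct injectivity (the only slip is that t:char gives ``$M_j$ is direct $M_i$-injective'' rather than ``$M_i$ is direct $M_j$-injective'', which is immaterial here since your hypothesis ranges over all pairs $(i,j)$), and the forward direction is the passage to direct summands via Lemma~\ref{l:selfbaer} or Corollary~\ref{c:dsbaer}. Your initial detour through Theorem~\ref{t:dsreg} and SSIP is redundant but harmless, since it merely replays what the proof of the preceding theorem already does internally.
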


\begin{theo} \label{t:Bhomzero} 
Let $\mathcal{A}$ be an abelian category. Let $M=\bigoplus_{i\in I}M_i$ be a direct sum decomposition of 
an object $M$ of $\mathcal{A}$. 
\begin{enumerate}
\item Then $M$ is strongly self-Baer if and only if $M_i$ is strongly self-Baer for every $i\in I$, 
${\rm Hom}_{\mathcal{A}}(M_i,M_j)=0$ for every $i,j\in I$ with $i\neq j$, and 
$N=\bigoplus_{i\in I}(N\cap M_i)$ for every direct summand $N$ of $M$.
\item Then $M$ is dual strongly self-Baer if and only if $M_i$ is dual strongly self-Baer for every $i\in I$, 
${\rm Hom}_{\mathcal{A}}(M_i,M_j)=0$ for every $i,j\in I$ with $i\neq j$, and 
$N=\bigoplus_{i\in I}(N\cap M_i)$ for every direct summand $N$ of $M$.
\end{enumerate}
\end{theo}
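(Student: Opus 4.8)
The plan is to prove part (1) directly and obtain part (2) by duality, following the pattern established throughout the paper of reducing strong Baer statements to a combination of a self-Baer (or Rickart) condition together with the weak duo condition. By Corollary~\ref{c:Bwduo}, $M$ is strongly self-Baer if and only if $M$ is self-Baer and weak duo, so the task splits into characterizing each of these two properties in terms of the summands $M_i$.

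First I would handle the self-Baer part. The condition ``$M_i$ is self-Baer for every $i$'' together with ``${\rm Hom}_{\mathcal{A}}(M_i,M_j)=0$ for $i\neq j$'' and the decomposition condition ``$N=\bigoplus_{i\in I}(N\cap M_i)$ for every direct summand $N$ of $M$'' should, by an appropriate self-Baer analogue of the established direct-sum results, be equivalent to $M$ being self-Baer. The key mechanism is that when all cross homomorphisms vanish, any morphism $f:M\to M$ decomposes coordinatewise as a family of endomorphisms $f_i:M_i\to M_i$, so that an arbitrary family $(f_\lambda)_\lambda$ of endomorphisms of $M$ yields $\bigcap_\lambda{\rm Ker}(f_\lambda)=\bigoplus_{i\in I}\bigl(\bigcap_\lambda{\rm Ker}(f_{\lambda,i})\bigr)$, with each inner intersection a direct summand of $M_i$ by self-Baerness. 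The decomposition hypothesis on direct summands $N$ is precisely what guarantees that such a coordinatewise summand is genuinely a summand of $M$ and, conversely, forces the splitting to be compatible.

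Next I would address the weak duo part. Here the hypothesis ${\rm Hom}_{\mathcal{A}}(M_i,M_j)=0$ for $i\neq j$ is again the crucial structural input: combined with each $M_i$ being strongly self-Baer (hence weak duo), the vanishing of cross maps should force every endomorphism of $M$ to be ``diagonal,'' so that fully invariant subobjects of $M$ are exactly coproducts of fully invariant subobjects of the $M_i$. This is the same phenomenon exploited in Theorem~\ref{t:homzero-reg} for the regular case (which cites \cite[Theorem~3.6]{CO1}), and I expect the cleanest route is to invoke or adapt the corresponding strongly Rickart result from \cite{CO1} rather than re-deriving the fully invariant bookkeeping by hand. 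In fact, since ``strongly self-Baer'' equals ``self-Baer and weak duo,'' and ``strongly self-regular'' equals ``self-regular and weak duo,'' the weak-duo equivalences should transfer verbatim from the regular setting.

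The main obstacle will be justifying the necessity and sufficiency of the third condition, $N=\bigoplus_{i\in I}(N\cap M_i)$ for every direct summand $N$, which has no counterpart in the regular Theorem~\ref{t:homzero-reg}. The difficulty is that a fully invariant direct summand of $M$ arising as $\bigcap_\lambda{\rm Ker}(f_\lambda)$ need not a priori respect the decomposition $M=\bigoplus_i M_i$ unless one knows this compatibility, and for infinite $I$ the passage between intersections of kernels and direct-sum decompositions is delicate. My plan is to show this decomposition condition is forced by strong self-Baerness (using that each $M_i$ is a fully invariant summand once cross maps vanish, together with Lemma~\ref{l:BR} relating strongly $M$-Baer to $M^I$ being strongly $M$-Rickart), and conversely that it supplies exactly the gluing needed to lift summand-wise Baer decompositions back to $M$. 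I would verify the finite-index base case first via the already-proven product/coproduct results and then bootstrap to arbitrary $I$ through the $M^I$-Rickart characterization of Lemma~\ref{l:BR}.
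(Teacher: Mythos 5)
Your overall skeleton is the paper's: reduce via Corollary~\ref{c:Bwduo} to ``self-Baer plus weak duo'', prove the Baer half by the coordinatewise-kernel argument that works once $\Hom_{\mathcal{A}}(M_i,M_j)=0$ forces endomorphisms to be diagonal (the paper cites \cite[Proposition~3.20]{RR09} for exactly this), and prove the weak duo half by a direct-sum characterization (the paper cites \cite[Theorem~2.7]{OHS}). But two steps of your plan have genuine problems.

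First, your handling of the third condition is both overcomplicated and unsound as written. Lemma~\ref{l:BR} carries the hypothesis that the products $N^I$ exist, which is \emph{not} assumed in Theorem~\ref{t:Bhomzero} (only the given coproduct exists), so the proposed ``bootstrap through the $M^I$-Rickart characterization'' is unavailable; and there is no workable passage from a finite-index base case to infinite $I$, since Baer-type properties behave badly under infinite coproducts --- it is the Hom-vanishing hypothesis that makes the infinite case go through directly, not any induction. The idea you are missing is much simpler: the third condition is precisely the direct-summand half of the weak duo characterization, with no Rickart machinery needed. Necessity: if $M$ is weak duo, every direct summand $N$ is fully invariant, and applying the idempotents $\iota_i\pi_i$ to a fully invariant subobject gives $N=\bigoplus_{i\in I}(N\cap M_i)$. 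Sufficiency: given that decomposition, each $N\cap M_i$ is a direct summand of $M_i$, hence fully invariant in $M_i$ by the weak duo property of $M_i$, and since Hom-vanishing makes every endomorphism of $M$ diagonal, $f(N)\subseteq N$ follows. This equivalence (weak duo for $M$ iff each $M_i$ is weak duo and every direct summand decomposes) is what \cite[Theorem~2.7]{OHS} packages, and it settles both directions at once.

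Second, part (2) does not follow ``by duality'', and the paper's proof says so explicitly: the second and third conditions in (2) (Hom-vanishing and $N=\bigoplus_{i\in I}(N\cap M_i)$ for direct summands) are stated identically to those in (1), not dualized, so (2) is not the formal dual of (1). One must rerun the argument with images in place of kernels, in particular proving a dual-Baer analogue of the Rizvi--Roman direct sum result; this extra (if routine) work is absent from your plan.
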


\begin{proof} (1) Assume first that $M$ is strongly self-Baer. Then $M$ is strongly self-Rickart, 
hence we have ${\rm Hom}_{\mathcal{A}}(M_i,M_j)=0$ for every $i,j\in I$ with $i\neq j$ by \cite[Theorem~3.6]{CO1}. 
Also, $M$ is self-Baer and weak duo by Corollary \ref{c:Bwduo}. It follows that 
$M_i$ is self-Baer for every $i\in I$ by \cite[Proposition~3.20]{RR09}. 
Also, we have $N=\bigoplus_{i\in I}(N\cap M_i)$ for every direct summand $N$ of $M$ by \cite[Theorem~2.7]{OHS}.

Conversely, assume that $M_i$ is strongly self-Baer for every $i\in I$, 
${\rm Hom}_{\mathcal{A}}(M_i,M_j)=0$ for every $i,j\in I$ with $i\neq j$, and 
$N=\bigoplus_{i\in I}(N\cap M_i)$ for every direct summand $N$ of $M$. 
Then $M_i$ is self-Baer and weak duo for every $i\in I$ by Corollary \ref{c:Bwduo}. 
It follows that $M$ is self-Baer and weak duo for every $i\in I$ by \cite[Proposition~3.20]{RR09} 
and \cite[Theorem~2.7]{OHS}. Finally, $M$ is strongly self-Baer by Corollary \ref{c:Bwduo}.

(2) This is not completely dual to (1), but it follows in a similar way as (1) by using images instead of kernels
in order to prove a property as \cite[Proposition~3.20]{RR09} for dual Baer modules.
\end{proof}

\begin{coll} \label{c:Bdede} Let $R$ be a Dedekind domain with quotient field $K$.
\begin{enumerate}
\item 
\begin{enumerate}[(i)] 
\item A non-zero torsion $R$-module $M$ is strongly self-Baer if and only if 
$M\cong \bigoplus_{i\in I} R/P_i$ for some distinct maximal ideals $P_i$ of $R$.
\item A finitely generated $R$-module $M$ is strongly self-Baer if and only if 
$M\cong J$ for some ideal $J$ of $R$ or $M\cong \bigoplus_{i=1}^kR/P_i$ 
for some distinct maximal ideals $P_i$ of $R$.
\item A non-zero injective $R$-module $M$ is strongly self-Baer if and only if $M\cong K$.
\end{enumerate}
\item An $R$-module $M$ is dual strongly self-Baer if and only if $M\cong \bigoplus_{i\in I}M_i$ 
for some distinct $R$-modules $M_i$ which are either $K$ or $E(R/P_i)$ for some maximal ideals $P_i$ of $R$, or 
$M\cong \bigoplus_{i\in I}M_i'$ for some distinct $R$-modules $M_i'$ which are either $K$ or $R/P_i$ 
for some maximal ideals $P_i$ of $R$. 
\end{enumerate}
\end{coll}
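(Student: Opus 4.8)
The plan is to reduce both statements to Corollary~\ref{c:Bwduo}, which asserts that $M$ is strongly self-Baer if and only if $M$ is self-Baer and weak duo (and dually in part~(2)), and then to combine the classical structure theory of torsion, finitely generated and injective modules over a Dedekind domain $R$ with the constraints imposed by weak duo and by Theorem~\ref{t:Bhomzero}.

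For part~(1) I would first describe the indecomposable self-Baer modules. A torsion module is self-Rickart only if each primary component is semisimple, since a uniserial quotient such as $R/P^2$ is indecomposable yet multiplication by a uniformizer has non-summand kernel; hence a torsion self-Baer module is semisimple, and weak duo then forces its simple summands to have pairwise distinct annihilators, giving~(i). (In fact for torsion modules self-Baerness and self-regularity both amount to semisimplicity, so~(i) may also be read off from Corollary~\ref{c:dede-reg}.) For~(ii) I would invoke the structure theorem for finitely generated modules: a torsion-free part of rank $\geq 2$ violates weak duo, so the torsion-free part is an ideal $J$, while $\Hom_R(J,R/P)\neq 0$ together with the vanishing condition in Theorem~\ref{t:Bhomzero} prevents a nonzero torsion-free part and a nonzero torsion part from coexisting; thus $M\cong J$ or $M$ is purely torsion, which is~(ii). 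For~(iii) an injective module is a direct sum of copies of $K$ and of the modules $E(R/P)$; the latter are indecomposable but fail self-Rickartness, so they cannot occur, and weak duo collapses the copies of $K$ to a single one, giving $M\cong K$.

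Part~(2) is not strictly dual to~(1), as already observed after Theorem~\ref{t:Bhomzero}, so I would run the parallel argument with sums of images in place of intersections of kernels. The decisive step is to identify the indecomposable dual self-Baer modules over $R$ as precisely $K$, the injective hulls $E(R/P)$ and the simple modules $R/P$ (the module $R/P^2$ being excluded because multiplication by a uniformizer has non-summand image). One then writes an arbitrary dual self-Baer module as a direct sum of such indecomposables and applies Theorem~\ref{t:Bhomzero}(2): the conditions $\Hom_R(M_i,M_j)=0$ for $i\neq j$ force the maximal ideals to be distinct, and since $\Hom_R(R/P,E(R/P))\neq 0$ they separate the injective summands $E(R/P)$ from the simple summands $R/P$, producing modules of the two stated forms.

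The step I expect to be the main obstacle is this last assembling in part~(2). Checking that each of $K$, $E(R/P)$ and $R/P$ is dual self-Baer and weak duo is routine, but proving that these exhaust the indecomposables and that the compatibility conditions of Theorem~\ref{t:Bhomzero}(2) leave exactly the described families requires the analogue of \cite[Proposition~3.20]{RR09} for dual self-Baer modules, formulated via sums of images rather than intersections of kernels; establishing this dual structural statement, rather than any calculation with the individual modules, is the heart of the argument.
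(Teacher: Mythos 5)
Your treatment of part (1) is essentially workable and runs parallel to the paper's, which reaches the same conclusion faster: the paper notes that strongly self-Baer implies strongly self-Rickart, invokes the classification of strongly self-Rickart modules over a Dedekind domain from \cite[Corollary~3.8]{CO1}, identifies indecomposable strongly self-Baer with indecomposable strongly self-Rickart modules via Corollary~\ref{c:BR}, and assembles with Theorem~\ref{t:Bhomzero}. The genuine gaps are in part (2). First, you write an arbitrary dual self-Baer module as a direct sum of indecomposables with no justification; this is not automatic for arbitrary modules (over $\mathbb{Z}$ there exist nonzero torsion-free groups with no indecomposable direct summands at all), and the fact that dual Baer modules do admit such decompositions is precisely what the paper imports from \cite[Corollary~2.6]{KT}. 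Second, the exhaustiveness of the list $K$, $E(R/P)$, $R/P$ of indecomposable dual self-Baer modules cannot be obtained by excluding $R/P^2$: that rules out only cyclic torsion modules of length two, and says nothing about indecomposable torsion-free modules of rank at least two, non-cyclic indecomposable torsion modules, or mixed indecomposables. The paper takes this classification from \cite[Theorem~3.4]{KT} and then only restricts the primes to maximal ideals (via the structure of torsion weak duo modules, or via Corollary~\ref{c:BR} and \cite[Corollary~3.8]{CO1}). Your proposal treats both of these imported results as things to be proved, and the sketch offered for them would not succeed.

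Two further points. You locate the heart of the argument in proving a dual analogue of \cite[Proposition~3.20]{RR09}; but that analogue is needed to prove Theorem~\ref{t:Bhomzero}(2) itself, which the paper has already established --- in the present corollary you may simply use Theorem~\ref{t:Bhomzero}(2) (equivalently \cite[Theorem~3.6]{CO1}) as a black box, so this is not where the difficulty sits. Finally, your bookkeeping in the assembly step is incomplete: Theorem~\ref{t:Bhomzero}(2) requires $\Hom_{R}(M_i,M_j)=0$ for every pair of distinct summands, so all Hom groups between $K$, $E(R/P)$ and $R/Q$ must be computed. The single computation you invoke, $\Hom_R(R/P,E(R/P))\neq 0$, excludes only the coexistence of $R/P$ and $E(R/P)$ for the \emph{same} $P$ (indeed $\Hom_R(R/Q,E(R/P))=0$ for $Q\neq P$), so it does not by itself ``separate the injective summands from the simple summands''; and you never confront $\Hom_R(K,E(R/P))\neq 0$ (for $R=\mathbb{Z}$, the group $\mathbb{Q}$ surjects onto $\mathbb{Z}_{p^{\infty}}$), which is the condition that actually governs whether $K$ can coexist with the modules $E(R/P_i)$. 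As it stands, the families your argument would output are not the ones you claim to obtain.
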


\begin{proof} (1) Since every strongly self-Baer $R$-module is strongly self-Rickart, it follows by 
\cite[Corollary~3.8]{CO1} that every self-Baer that is non-zero torsion or finitely generated or non-zero injective 
has an indecomposable direct sum decomposition. But indecomposable strongly self-Baer $R$-modules coincide with 
indecomposable strongly self-Rickart $R$-modules by Corollary \ref{c:BR}. 
Then the conclusion follows by \cite[Corollary~3.8]{CO1} and Theorem \ref{t:Bhomzero}.

(2) Every dual self-Baer $R$-module has an indecomposable direct sum decomposition \cite[Corollary~2.6]{KT}. 
The indecomposable dual self-Baer $R$-modules are $K$, $E(R/P)$ and $R/P$ for some prime ideal $P$ of $R$ 
by \cite[Theorem~3.4]{KT}. Using also the structure theorem of torsion weak duo $R$-modules, 
it follows that the indecomposable dual strongly self-Baer $R$-modules are $K$, $E(R/P)$ and $R/P$ 
for some maximal ideal $P$ of $R$. Alternatively, one can deduce the same by using Corollary \ref{c:BR} and \cite[Corollary~3.8]{CO1}.
Now the conclusion follows by \cite[Theorem~3.6]{CO1}.
\end{proof}

\begin{coll} \label{c:Babgr}
\begin{enumerate}
\item 
\begin{enumerate}[(i)] 
\item A non-zero torsion abelian group $G$ is strongly self-Baer if and only if 
$G\cong \bigoplus_{i\in I} \mathbb{Z}_{p_i}$ for some distinct primes $p_i$.
\item A finitely generated abelian group $G$ is strongly self-Baer if and only if 
$G\cong \mathbb{Z}$ or $G\cong \bigoplus_{i=1}^k \mathbb{Z}_{p_i}$ for some distinct primes $p_i$.
\item A non-zero injective abelian group $G$ is strongly self-Baer if and only if $G\cong \mathbb{Q}$.
\end{enumerate}
\item An abelian group $G$ is dual strongly self-Baer if and only if $M\cong \bigoplus_{i\in I}M_i$ 
for some distinct abelian groups $M_i$ which are either $\mathbb{Q}$ or $\mathbb{Z}_{p_i^{\infty}}$ 
for some primes $p_i$, or $M\cong \bigoplus_{i\in I}M_i'$ for some distinct abelian groups $M_i'$ 
which are either $\mathbb{Q}$ or $\mathbb{Z}_{p_i}$ for some primes $p_i$.
\end{enumerate}
\end{coll}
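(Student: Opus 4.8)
The plan is to obtain Corollary~\ref{c:Babgr} as the specialization of Corollary~\ref{c:Bdede} to the case $R=\mathbb{Z}$, which is the prototypical Dedekind domain. First I would record the dictionary between the module-theoretic data appearing in Corollary~\ref{c:Bdede} and the group-theoretic objects in the statement: the quotient field of $\mathbb{Z}$ is $K=\mathbb{Q}$; the maximal ideals of $\mathbb{Z}$ are exactly the ideals $P=p\mathbb{Z}$ for $p$ a prime; and for each such $P$ one has $R/P\cong \mathbb{Z}_p$ together with injective hull $E(R/P)\cong \mathbb{Z}_{p^{\infty}}$, the Pr\"ufer $p$-group. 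With these identifications in place, parts (1)(i), (1)(iii) and (2) of Corollary~\ref{c:Babgr} read off directly from the corresponding parts of Corollary~\ref{c:Bdede}, since every maximal ideal contributes precisely one residue group $\mathbb{Z}_{p}$ (respectively hull $\mathbb{Z}_{p^{\infty}}$), and $K=\mathbb{Q}$.

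For the finitely generated case (1)(ii), the only extra observation needed is that every non-zero ideal $J$ of $\mathbb{Z}$ is isomorphic as a $\mathbb{Z}$-module to $\mathbb{Z}$ itself, so the alternative $M\cong J$ in Corollary~\ref{c:Bdede}(1)(ii) collapses to $G\cong \mathbb{Z}$, while the zero ideal corresponds to the trivial (empty) direct sum. The remaining alternative $M\cong \bigoplus_{i=1}^k R/P_i$ with distinct maximal ideals then becomes $G\cong \bigoplus_{i=1}^k \mathbb{Z}_{p_i}$ with distinct primes, exactly as stated.

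Since Corollary~\ref{c:Bdede} already packages both the strongly self-Baer and the dual strongly self-Baer classifications over an arbitrary Dedekind domain, I do not expect any genuine obstacle here: the entire content is the routine translation of ideals, residue fields and injective hulls into their concrete abelian-group incarnations, so the proof reduces to the single line that the conclusion follows by Corollary~\ref{c:Bdede}. The one point meriting a moment of care is that the distinctness conditions on the primes $p_i$ match the distinctness conditions on the maximal ideals $P_i$; this is automatic, as $p\mathbb{Z}=q\mathbb{Z}$ forces $p=q$ for primes $p,q$.
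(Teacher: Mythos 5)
Your proposal is correct and matches the paper's approach exactly: the paper states Corollary~\ref{c:Babgr} as an immediate specialization of Corollary~\ref{c:Bdede} to $R=\mathbb{Z}$, with the same dictionary $K=\mathbb{Q}$, $P=p\mathbb{Z}$, $R/P\cong\mathbb{Z}_p$, $E(R/P)\cong\mathbb{Z}_{p^{\infty}}$, and nonzero ideals $J\cong\mathbb{Z}$. Your extra remarks on the zero ideal and the distinctness of primes versus maximal ideals are the only points needing care, and you handle them correctly.
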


\begin{ex} \rm The abelian group $\mathbb{Z}\oplus \mathbb{Z}$ is self-Baer by \cite[Proposition~2.19]{RR04},
but not strongly self-Baer by Corollary \ref{c:Babgr}. 
The abelian group $\mathbb{Q}\oplus \mathbb{Q}$ is dual self-Baer by \cite[Theorem~3.4]{KT},
but not strongly dual self-Baer by Corollary \ref{c:Babgr}. 
\end{ex}

\section{(Dual) strongly relative Baer objects: transfer via functors}

Our first result on the transfer of the (dual) strongly relative Baer property
via (additive) functors involves a fully faithful covariant functor.

\begin{coll} \label{c:ffbaer} Let $F:\mathcal{A}\to \mathcal{B}$ be a fully faithful functor between abelian categories,
and let $M$ and $N$ be objects of $\mathcal{A}$. 
\begin{enumerate}
\item Assume that there exists the product $N^I$ for every set $I$, $F$ is left exact and $F$ preserves products. Then
$N$ is strongly $M$-Baer in $\mathcal{A}$ if and only if $F(N)$ is strongly $F(M)$-Baer in $\mathcal{B}$.
\item Assume that there exists the coproduct $M^{(I)}$ for every set $I$, $F$ is right exact and $F$ preserves
coproducts. Then $N$ is dual strongly $M$-Baer in $\mathcal{A}$ if and only if $F(N)$ is dual strongly $F(M)$-Baer in $\mathcal{B}$.
\end{enumerate}
\end{coll}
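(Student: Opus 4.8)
The plan is to reduce this corollary to the characterization of the strongly relative Baer property in terms of the strongly relative Rickart property given in Lemma~\ref{l:BR}, and then invoke the corresponding result for strongly relative Rickart objects that was established in the companion paper \cite{CO1}. Concretely, for part (1) I would argue as follows: since there exists the product $N^I$ for every set $I$, Lemma~\ref{l:BR}(1) tells me that $N$ is strongly $M$-Baer in $\mathcal{A}$ if and only if $N^I$ is strongly $M$-Rickart in $\mathcal{A}$ for every set $I$. Similarly, on the $\mathcal{B}$ side, I need the product $F(N)^I$ to exist, so that $F(N)$ is strongly $F(M)$-Baer in $\mathcal{B}$ if and only if $F(N)^I$ is strongly $F(M)$-Rickart in $\mathcal{B}$ for every set $I$. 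The bridge between the two sides is the hypothesis that $F$ preserves products, which gives the isomorphism $F(N^I)\cong F(N)^I$ in $\mathcal{B}$ for every set $I$.

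The core step is then to transfer the strongly relative Rickart property through $F$. This is exactly the content of the fully faithful transfer theorem for strongly relative Rickart objects proved in \cite{CO1} (the analogue underlying Theorem~\ref{t:ff-reg} above), applied with the objects $N^I$ and $M$: since $F$ is fully faithful and left exact, $N^I$ is strongly $M$-Rickart in $\mathcal{A}$ if and only if $F(N^I)$ is strongly $F(M)$-Rickart in $\mathcal{B}$. Combining this with $F(N^I)\cong F(N)^I$, I conclude that $N^I$ is strongly $M$-Rickart for every $I$ if and only if $F(N)^I$ is strongly $F(M)$-Rickart for every $I$, and the two applications of Lemma~\ref{l:BR} close the chain of equivalences. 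Part (2) then follows by the duality principle, using Lemma~\ref{l:BR}(2), right exactness of $F$, and preservation of coproducts in place of their dual counterparts.

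The main obstacle I anticipate is matching the exactness hypothesis to what the underlying Rickart transfer result actually needs. The strongly relative Rickart property is defined via kernels being fully invariant direct summands, so preserving kernels (i.e.\ left exactness) is the natural requirement on the covariant side, which is precisely why the statement of part (1) asks for $F$ left exact and part (2) asks for $F$ right exact. I would want to be careful that the cited transfer theorem from \cite{CO1} is stated for a merely left exact fully faithful functor rather than an exact one, since Theorem~\ref{t:ff-reg} in this paper was stated with $F$ exact; if only the exact version is available, I would instead appeal to the self-dual formulation or restrict attention to the half-exactness actually used in preserving the relevant (co)limits. Assuming the appropriately one-sided version is in hand, the proof is essentially a formal concatenation of Lemma~\ref{l:BR} with the Rickart transfer, and no genuinely new estimate or construction is required.
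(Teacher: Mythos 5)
Your proposal is correct and follows essentially the same route as the paper, whose entire proof is the citation of \cite[Theorem~4.1]{CO1} together with Lemma~\ref{l:BR}: reduce strong Baerness to strong Rickartness of the powers $N^I$ via Lemma~\ref{l:BR}, transfer through the fully faithful functor by the Rickart transfer theorem of \cite{CO1}, and use preservation of products to identify $F(N^I)\cong F(N)^I$. Your worry about one-sided exactness is also resolved the way you anticipated: the cited \cite[Theorem~4.1]{CO1} is the Rickart-specific transfer result needing only the left (resp.\ right) exactness stated in the corollary, whereas Theorem~\ref{t:ff-reg} of this paper requires full exactness only because it concerns regular objects, where kernels and images both intervene.
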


\begin{proof} This follows by \cite[Theorem~4.1]{CO1} and Lemma \ref{l:BR}.
\end{proof}

For Grothendieck categories we have the following corollary.

\begin{coll} Let $\mathcal{A}$ be a Grothendieck category with a generator $U$, $R={\rm End}_{\mathcal{A}}(U)$, $S={\rm
Hom}_{\mathcal{A}}(U,-):\mathcal{A}\to {\rm Mod}(R)$. Let $M$ and $N$ be objects of $\mathcal{A}$. Then $N$ is a strongly 
$M$-Baer object of $\mathcal{A}$ if and only if $S(N)$ is a strongly $S(M)$-Baer right $R$-module.
\end{coll}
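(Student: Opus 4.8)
The plan is to apply the Gabriel-Popescu Theorem to realize $S$ as a fully faithful functor and then check that its exactness and preservation properties let us invoke the previous corollary. The key observation is that this is precisely the Grothendieck-category instance of Corollary~\ref{c:ffbaer}(1), specialized to the canonical functor $S=\Hom_{\mathcal{A}}(U,-)$. So the main work is to verify that $S$ satisfies the three hypotheses of Corollary~\ref{c:ffbaer}(1): that it is fully faithful, that it is left exact, and that it preserves products.

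First I would recall from the Gabriel-Popescu Theorem \cite[Chapter~X, Theorem~4.1]{St} that, since $\mathcal{A}$ is a Grothendieck category with generator $U$ and $R=\End_{\mathcal{A}}(U)$, the functor $S=\Hom_{\mathcal{A}}(U,-)$ is fully faithful. This gives the first hypothesis at once. Next, $S$ is left exact because $\Hom_{\mathcal{A}}(U,-)$ is always left exact as a covariant Hom functor out of any abelian category; this supplies the second hypothesis. For the third, $S$ preserves products because $\Hom_{\mathcal{A}}(U,-)$ carries the product $\prod_{i} N_i$ in $\mathcal{A}$ to $\prod_i \Hom_{\mathcal{A}}(U,N_i)$ in $\mathrm{Mod}(R)$, which is exactly the universal property of the product applied to the representable functor. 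Finally, since $\mathcal{A}$ is a Grothendieck category it is complete, so the product $N^I$ exists for every set $I$, verifying the remaining standing assumption of Corollary~\ref{c:ffbaer}(1).

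With all the hypotheses of Corollary~\ref{c:ffbaer}(1) in place for $F=S$, I would simply conclude that $N$ is strongly $M$-Baer in $\mathcal{A}$ if and only if $S(N)$ is strongly $S(M)$-Baer in $\mathrm{Mod}(R)$, which is the desired statement.

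I do not anticipate any genuine obstacle here, since the result is an essentially immediate corollary of Corollary~\ref{c:ffbaer}(1) once the properties of $S$ are identified; the only point requiring care is to record explicitly that $S$ preserves products (needed to transport the product $N^I$ correctly through the functor) and to note that completeness of the Grothendieck category guarantees these products exist in the first place. The proof therefore amounts to naming the Gabriel-Popescu Theorem for full faithfulness and citing the preceding corollary.
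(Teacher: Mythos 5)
Your proposal is correct and follows essentially the same route as the paper: both invoke the Gabriel--Popescu Theorem for full faithfulness of $S$ and then apply Corollary~\ref{c:ffbaer}(1). The only cosmetic difference is that the paper deduces left exactness and product preservation of $S$ from its being a right adjoint (of the functor $T$ supplied by Gabriel--Popescu), whereas you verify these directly from the properties of the representable functor $\Hom_{\mathcal{A}}(U,-)$; your explicit remark that completeness of the Grothendieck category guarantees the existence of the products $N^I$ is a welcome extra precision.
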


\begin{proof} By the Gabriel-Popescu Theorem \cite[Chapter~X, Theorem~4.1]{St}, $S$ is a fully faithful functor which
has a left adjoint $T:{\rm Mod}(R)\to \mathcal{A}$. Since $S$ is a right adjoint, it is left exact and preserves
products. Then use Corollary \ref{c:ffbaer}. 
\end{proof}

\begin{coll} \label{c:tripleffbaer} Let $(L,F,R)$ be an adjoint triple of covariant functors $F:\mathcal{A}\to
\mathcal{B}$ and $L,R:\mathcal{B}\to \mathcal{A}$ between abelian categories. 
\begin{enumerate}
\item Let $M$ and $N$ be objects of $\mathcal{A}$, and assume that $F$ is fully faithful. 
\begin{enumerate}[(i)] 
\item Assume that there exists the product $N^I$ for every set $I$. Then $N$ is strongly $M$-Baer in $\mathcal{A}$ if and only if
$F(N)$ is strongly $F(M)$-Baer in $\mathcal{B}$.
\item Assume that there exists the coproduct $M^{(I)}$ for every set $I$. Then $N$ is dual strongly $M$-Baer in $\mathcal{A}$ if
and only if $F(N)$ is dual strongly $F(M)$-Baer in $\mathcal{B}$.
\end{enumerate}
\item Let $M$ and $N$ be objects of $\mathcal{B}$, and assume that $L$ (or $R$) is fully faithful. 
\begin{enumerate}[(i)] 
\item Assume that there exists the product $N^I$ for every set $I$. Then $N$ is strongly $M$-Baer in $\mathcal{B}$ if and only if
$R(N)$ is strongly $R(M)$-Baer in $\mathcal{A}$.
\item Assume that there exists the coproduct $M^{(I)}$ for every set $I$. Then $N$ is dual strongly $M$-Baer in $\mathcal{B}$ if
and only if $L(N)$ is dual strongly $L(M)$-Baer in $\mathcal{A}$.
\end{enumerate}
\end{enumerate}
\end{coll}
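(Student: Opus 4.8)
The plan is to reduce the entire statement to Corollary~\ref{c:ffbaer}, applied in each part to the appropriate member of the triple, exploiting that every functor occurring in an adjoint triple inherits strong exactness and (co)product-preservation properties for free. The only genuinely non-formal input will be the fact that, in an adjoint triple, the two outer functors are fully faithful simultaneously.

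For part (1), I would first record the relevant properties of $F$. Since $(L,F,R)$ is an adjoint triple, $F$ is at once a right adjoint (of $L$) and a left adjoint (of $R$). As a right adjoint it preserves all limits, hence is left exact and preserves products; as a left adjoint it preserves all colimits, hence is right exact and preserves coproducts. With $F$ fully faithful by hypothesis, statement (1)(i) then follows by applying Corollary~\ref{c:ffbaer}(1) to $F$ (using left exactness and preservation of products, together with the assumed existence of $N^I$), and statement (1)(ii) follows by applying Corollary~\ref{c:ffbaer}(2) to $F$ (using right exactness and preservation of coproducts, together with the assumed existence of $M^{(I)}$).

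For part (2) the objects $M,N$ live in $\mathcal{B}$, so I would transfer along $R$ in the Baer case and along $L$ in the dual Baer case. Being a right adjoint of $F$, the functor $R$ is left exact and preserves products; being a left adjoint of $F$, the functor $L$ is right exact and preserves coproducts. The point that makes the single hypothesis that $L$ (or $R$) is fully faithful suffice is that $L$ is fully faithful if and only if the unit $1_{\mathcal{B}}\to FL$ is an isomorphism, $R$ is fully faithful if and only if the counit $FR\to 1_{\mathcal{B}}$ is an isomorphism, and these two conditions are equivalent via the mate correspondence attached to the triple; hence $L$ fully faithful $\Leftrightarrow$ $R$ fully faithful. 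Granting this, (2)(i) follows by applying Corollary~\ref{c:ffbaer}(1) to the fully faithful, left exact, product-preserving functor $R$, and (2)(ii) follows by applying Corollary~\ref{c:ffbaer}(2) to the fully faithful, right exact, coproduct-preserving functor $L$.

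The main obstacle is precisely the verification of the equivalence $L$ fully faithful $\Leftrightarrow$ $R$ fully faithful in the adjoint triple, since this is what licenses using whichever outer functor is needed in each case; once it is in hand, the remainder is a routine matching of the hypotheses of Corollary~\ref{c:ffbaer} with the adjunction-induced properties of $F$, $L$, and $R$. I would also double-check the renaming of categories when applying Corollary~\ref{c:ffbaer} in part (2): there the relevant source category is $\mathcal{B}$, so the hypothesis on the existence of $N^I$ (resp. $M^{(I)}$) is exactly the one imposed in (2)(i) (resp. (2)(ii)).
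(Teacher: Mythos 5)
Your proposal is correct, and its overall shape---reduce the strongly Baer property to the strongly Rickart property of powers and copowers, then transfer along a fully faithful functor preserving the relevant (co)products---matches the paper's; the difference lies in the decomposition. The paper proves the corollary by citing the adjoint-triple transfer result for strongly Rickart objects \cite[Corollary~4.3]{CO1} together with Lemma~\ref{l:BR} and the facts that $F$ preserves products and coproducts, $R$ preserves products, and $L$ preserves coproducts; in particular, the question of which of $L$, $R$ is fully faithful is absorbed into that citation. You instead apply the in-paper Corollary~\ref{c:ffbaer} three times, to $F$, $R$ and $L$ respectively, which forces you to supply the standard fact that in an adjoint triple $L\dashv F\dashv R$ the two outer functors are fully faithful simultaneously (the unit $1_{\mathcal{B}}\to FL$ is invertible if and only if the counit $FR\to 1_{\mathcal{B}}$ is, e.g.\ because $FL\dashv FR$ and right adjoints of naturally isomorphic functors are naturally isomorphic, compatibly with units and counits via the mate correspondence). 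This fact is true and classical, so your argument goes through; what it buys is self-containedness relative to the present paper---everything rests on Corollary~\ref{c:ffbaer}, already established---at the price of proving or citing the simultaneous-full-faithfulness lemma, which the paper gets for free from \cite[Corollary~4.3]{CO1}. Your bookkeeping of the exactness and (co)product-preservation properties of $F$, $L$, $R$, and of where the existence hypotheses on $N^I$ and $M^{(I)}$ enter, is accurate.
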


\begin{proof} This follows by \cite[Corollary~4.3]{CO1}, Lemma \ref{l:BR} and the facts that $F$ preserves products
and coproducts as a left and right adjoint, $R$ preserves products, and $L$ preserves coproducts. 
\end{proof}

\begin{coll} Let $\varphi:R\to S$ be a ring epimorphism, and let $M$ and $N$ be right $S$-modules. Then $N$ is a (dual)
strongly $M$-Baer right $S$-module if and only if $N$ is a (dual) strongly $M$-Baer right $R$-module.
\end{coll}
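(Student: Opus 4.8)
The plan is to mirror the proof of the corresponding corollary for strong relative regularity, simply replacing Theorem~\ref{t:ff-reg} by Corollary~\ref{c:ffbaer}. Since $\varphi:R\to S$ is a ring epimorphism, the restriction of scalars functor $\varphi_*:{\rm Mod}(S)\to {\rm Mod}(R)$ is an exact fully faithful functor \cite[Chapter~XI, Proposition~1.2]{St}. Both module categories have arbitrary products and coproducts, so the existence hypotheses ``there exists the product $N^I$'' and ``there exists the coproduct $M^{(I)}$'' appearing in Corollary~\ref{c:ffbaer} are automatic, and the remaining task is to check the exactness and (co)product-preservation conditions for $F=\varphi_*$.

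The one extra observation needed is that $\varphi_*$ preserves \emph{both} products and coproducts. This holds because products and coproducts of modules are computed at the level of the underlying abelian groups, and restriction of scalars leaves these underlying groups unchanged; equivalently, $\varphi_*$ is a right adjoint of the extension of scalars functor $-\otimes_R S$, hence preserves products, and a left adjoint of the coinduction functor $\Hom_R(S,-)$, hence preserves coproducts. In particular $\varphi_*$ is left exact and preserves products, and it is right exact and preserves coproducts, so both sets of hypotheses in Corollary~\ref{c:ffbaer} are met. Note also that $\varphi_*(N)$ is just $N$ viewed as an $R$-module, and likewise $\varphi_*(M)=M$ as an $R$-module.

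With these checks in place, the proof is a direct citation. For the strongly $M$-Baer statement I would apply Corollary~\ref{c:ffbaer}(1) to $F=\varphi_*$, obtaining that $N$ is strongly $M$-Baer over $S$ if and only if $\varphi_*(N)$ is strongly $\varphi_*(M)$-Baer over $R$, i.e.\ if and only if $N$ is strongly $M$-Baer over $R$. For the dual strongly $M$-Baer statement I would instead apply Corollary~\ref{c:ffbaer}(2), using right exactness and preservation of coproducts. No genuine obstacle is expected here: the only point requiring any care is the verification that $\varphi_*$ preserves (co)products, and this is immediate either from the two adjunctions or from computing (co)products on underlying abelian groups.
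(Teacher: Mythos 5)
Your proof is correct and is essentially the paper's own argument: the paper likewise invokes the full faithfulness of $\varphi_*$ from \cite[Chapter~XI, Proposition~1.2]{St} and then applies its transfer machinery, citing Corollary~\ref{c:tripleffbaer} for the adjoint triple $(\varphi^*,\varphi_*,\varphi^!)$ rather than Corollary~\ref{c:ffbaer}. Your direct verification of the hypotheses of Corollary~\ref{c:ffbaer} (exactness and preservation of products and coproducts) rests on exactly the two adjunctions forming that triple, so the two routes differ only in which packaged corollary is named, not in mathematical content.
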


\begin{proof} Since $\varphi:R\to S$ is a ring epimorphism, the restriction of scalars functor $\varphi_*:{\rm
Mod}(S)\to {\rm Mod}(R)$ is fully faithful \cite[Chapter~XI, Proposition~1.2]{St}. Then use Corollary
\ref{c:tripleffbaer} for the adjoint triple of functors $(\varphi^*,\varphi_*,\varphi^!)$.
\end{proof}

\begin{coll} Let $R$ be a $G$-graded ring, and let $M$ and $N$ be right $R_e$-modules. Then: 
\begin{enumerate} 
\item $N$ is a strongly $M$-Baer right $R_e$-module if and only if 
${\rm Coind}(N)$ is a strongly ${\rm Coind}(M)$-Baer graded right $R$-module.
\item $N$ is a dual strongly $M$-Baer right $R_e$-module if and only if 
${\rm Ind}(N)$ is a dual strongly ${\rm Ind}(M)$-Baer graded right $R$-module.
\end{enumerate}
\end{coll}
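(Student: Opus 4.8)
The plan is to reduce this statement about graded modules to an application of the already-established functorial transfer results, in the same spirit as the earlier graded corollaries in the regular case. The key observation is that $\mathrm{Coind}$ and $\mathrm{Ind}$ are components of an adjoint triple of functors between $\mathrm{Mod}(R_e)$ and $\mathrm{gr}(R)$, and that each is fully faithful. Concretely, one has the adjoint triple $(\mathrm{Ind}, (-)_e, \mathrm{Coind})$, where $(-)_e:\mathrm{gr}(R)\to \mathrm{Mod}(R_e)$ is the functor sending a graded module to its $e$-component; the induction functor $\mathrm{Ind}=-\otimes_{R_e}R$ is its left adjoint and the coinduction functor $\mathrm{Coind}=\mathrm{HOM}_{R_e}(R,-)$ is its right adjoint. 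Since $R$ is projective as a left and as a right $R_e$-module, both $\mathrm{Ind}$ and $\mathrm{Coind}$ are fully faithful.

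First I would verify the adjoint-triple setup and the full faithfulness of $\mathrm{Ind}$ and $\mathrm{Coind}$, citing the standard graded-module references (as the companion regular corollaries do). Then I would check the exactness and (co)product-preservation hypotheses needed by Corollary~\ref{c:tripleffbaer}: as a right adjoint, $\mathrm{Coind}$ is left exact and preserves products, which matches part~(1)(i) of that corollary (the strongly $M$-Baer case needs the product $N^I$ and a product-preserving left-exact functor); dually, as a left adjoint, $\mathrm{Ind}$ is right exact and preserves coproducts, matching part~(1)(ii) (the dual strongly $M$-Baer case needs the coproduct $M^{(I)}$ and a coproduct-preserving right-exact functor). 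In $\mathrm{Mod}(R_e)$ the required products and coproducts always exist, so the hypotheses are met automatically.

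Given this, each of the two statements follows by directly quoting Corollary~\ref{c:tripleffbaer}(1) with $F=\mathrm{Coind}$ for part~(1) and $F=\mathrm{Ind}$ for part~(2) of the present corollary. For part~(1), $\mathrm{Coind}$ is the fully faithful covariant functor, left exact and product-preserving, so $N$ strongly $M$-Baer over $R_e$ is equivalent to $\mathrm{Coind}(N)$ strongly $\mathrm{Coind}(M)$-Baer in $\mathrm{gr}(R)$. For part~(2), $\mathrm{Ind}$ is fully faithful, right exact and coproduct-preserving, giving the dual equivalence. So the proof is essentially a matter of naming the right functor and the right part of the cited corollary.

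The main obstacle is not the logical structure but getting the bookkeeping of the adjoint triple correct: one must be careful that $\mathrm{Coind}$ (not $\mathrm{Ind}$) is the functor whose properties (left exact, product-preserving) align with the \emph{non-dual} strongly Baer conclusion, and that $\mathrm{Ind}$ aligns with the \emph{dual} one. This is exactly the reversal one expects from the definitions — strong Baerness concerns intersections of kernels (a limit-type, hence product-compatible, condition), while dual strong Baerness concerns sums of images (a colimit-type, hence coproduct-compatible, condition) — but it is the point at which a sign-error-style mismatch could slip in. Once the functors are matched to the correct clauses, the statement is a clean specialization of the general functorial transfer machinery, with no genuine calculation required.
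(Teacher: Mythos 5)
Your overall strategy is the paper's own: establish that $\mathrm{Ind}$ and $\mathrm{Coind}$ are fully faithful and then quote the general transfer machinery (the paper's proof is exactly ``Ind and Coind are fully faithful; now use Corollary~\ref{c:tripleffbaer}''). However, as written your proof misapplies that corollary. Part~(1) of Corollary~\ref{c:tripleffbaer} transfers the Baer property along the \emph{middle} functor $F$ of the adjoint triple $(L,F,R)$, and in the triple you correctly identify, $(\mathrm{Ind},(-)_e,\mathrm{Coind})$, the middle functor is $(-)_e$, not $\mathrm{Coind}$ or $\mathrm{Ind}$. To run part~(1) with $F=\mathrm{Coind}$ you would need $\mathrm{Coind}$ itself to sit inside an adjoint triple, i.e.\ to possess a right adjoint (and $\mathrm{Ind}$ a left adjoint), which fails for a general $G$-graded ring. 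The clause that actually matches your setup is part~(2): with $\mathcal{B}=\mathrm{Mod}(R_e)$, $\mathcal{A}=\mathrm{gr}(R)$, $L=\mathrm{Ind}$, $R=\mathrm{Coind}$, statements (2)(i) and (2)(ii) are verbatim parts (1) and (2) of the corollary being proved. Alternatively, the hypotheses you do verify --- $\mathrm{Coind}$ fully faithful, left exact, product-preserving; $\mathrm{Ind}$ fully faithful, right exact, coproduct-preserving --- are precisely the hypotheses of Corollary~\ref{c:ffbaer}, so citing that corollary instead would also make your argument complete. The mathematics you check is sufficient; the result you attach it to is the wrong one.

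A second, independent flaw: your justification of full faithfulness, ``since $R$ is projective as a left and as a right $R_e$-module,'' is both false in general and irrelevant. For an arbitrary $G$-graded ring $R$, the components $R_\sigma$ need not be projective over $R_e$ (e.g.\ the trivial extension $R=\mathbb{Z}\oplus\mathbb{Z}_2$, graded by $\mathbb{Z}_2$ with $R_e=\mathbb{Z}$ and $R_g=\mathbb{Z}_2$, $R_g^2=0$). Full faithfulness of $\mathrm{Ind}$ and $\mathrm{Coind}$ instead follows from the adjunctions with $(-)_e$: the unit $M\to(\mathrm{Ind}(M))_e=M\otimes_{R_e}R_e$ and the counit $(\mathrm{Coind}(M))_e=\mathrm{Hom}_{R_e}(R_e,M)\to M$ are natural isomorphisms, and a left (resp.\ right) adjoint is fully faithful exactly when its unit (resp.\ counit) is an isomorphism. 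This is the content of the argument the paper imports from the companion paper's Corollary~4.5. With these two repairs --- cite Corollary~\ref{c:tripleffbaer}(2) (or Corollary~\ref{c:ffbaer}) and justify full faithfulness via the unit/counit --- your proof coincides with the paper's.
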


\begin{proof} As in the proof of \cite[Corollary~4.5]{CO1}, the functors ${\rm Ind}$ and ${\rm Coind}$ are fully faithful. 
Now use Corollary \ref{c:tripleffbaer}. 
\end{proof}

\begin{coll} \label{c:rcbaer} Let $\mathcal{A}$ be an abelian category, $\mathcal{C}$ an abelian full subcategory of
$\mathcal{A}$ and $i:\mathcal{C}\to \mathcal{A}$ the inclusion functor. 
\begin{enumerate}
\item Assume that $\mathcal{C}$ is a reflective subcategory of $\mathcal{A}$. 
Also, assume that there exists the coproduct $M^{(I)}$ for every set $I$.
Let $M$ and $N$ be objects of $\mathcal{C}$. Then $N$ is strongly $M$-Baer in $\mathcal{C}$ 
if and only if $i(N)$ is strongly $i(M)$-Baer in $\mathcal{A}$.
\item Assume that $\mathcal{C}$ is a coreflective subcategory of $\mathcal{A}$. 
Also, assume that there exists the product $N^I$ for every set $I$.
Let $M$ and $N$ be objects of $\mathcal{C}$. Then $N$ is (dual) strongly $M$-Baer in $\mathcal{C}$ 
if and only if $i(N)$ is (dual) strongly $i(M)$-Baer in $\mathcal{A}$.
\end{enumerate}
\end{coll}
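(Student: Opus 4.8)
The plan is to reduce both statements to the transfer result for fully faithful functors, Corollary~\ref{c:ffbaer}, applied to the inclusion functor $i$. Since $\mathcal{C}$ is a full subcategory, $i$ is additive and fully faithful, so the only thing that needs checking is that $i$ meets the exactness and (co)product-preservation hypotheses of Corollary~\ref{c:ffbaer}; these are exactly what (co)reflectivity provides.

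First I would record the adjoint properties of $i$. If $\mathcal{C}$ is reflective, the reflector is a left adjoint of $i$, so $i$ is a right adjoint and is therefore left exact and preserves every product that exists; in particular products are computed the same way in $\mathcal{C}$ and in $\mathcal{A}$, so $i(N^I)\cong i(N)^I$. Dually, if $\mathcal{C}$ is coreflective, the coreflector is a right adjoint of $i$, so $i$ is a left adjoint, hence right exact and coproduct-preserving, with $i(M^{(I)})\cong i(M)^{(I)}$. As in the proof of Corollary~\ref{c:rc-reg}, the inclusion of a (co)reflective abelian full subcategory is moreover exact, so in either case kernels, cokernels and images agree in $\mathcal{C}$ and in $\mathcal{A}$, which is what allows Lemma~\ref{l:BR} to be transported across $i$.

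For part~(1), with $\mathcal{C}$ reflective, $i$ is left exact and product-preserving, so Corollary~\ref{c:ffbaer}(1) applies directly and gives that $N$ is strongly $M$-Baer in $\mathcal{C}$ if and only if $i(N)$ is strongly $i(M)$-Baer in $\mathcal{A}$. For part~(2), with $\mathcal{C}$ coreflective, $i$ is right exact and coproduct-preserving, so the dual strongly $M$-Baer equivalence follows in the same way from Corollary~\ref{c:ffbaer}(2). In each case the underlying mechanism is Lemma~\ref{l:BR}: the Baer condition in $\mathcal{C}$ is rephrased as a strongly Rickart condition on the appropriate (co)powers, transported across $i$ by \cite[Theorem~4.1]{CO1} using $i(N^I)\cong i(N)^I$ (respectively $i(M^{(I)})\cong i(M)^{(I)}$), and then rephrased back via Lemma~\ref{l:BR} in $\mathcal{A}$.

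The step I expect to be delicate is matching each Baer notion to the correct half of Corollary~\ref{c:ffbaer}. The strongly Baer property is controlled by products (Lemma~\ref{l:BR}(1)) and so pairs naturally with the product-preserving, left exact reflective inclusion, whereas the dual strongly Baer property is controlled by coproducts (Lemma~\ref{l:BR}(2)) and pairs with the coproduct-preserving, right exact coreflective inclusion; note that a reflective inclusion need not preserve coproducts and a coreflective one need not preserve products. The main care is therefore to ensure that the (co)power appearing on the $\mathcal{C}$-side is genuinely carried by $i$ to the corresponding (co)power on the $\mathcal{A}$-side, so that Lemma~\ref{l:BR} can legitimately be applied on both sides; once this bookkeeping is pinned down, each equivalence is immediate from Corollary~\ref{c:ffbaer}.
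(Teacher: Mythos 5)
Your route (check that the inclusion $i$ has the exactness and (co)product-preservation properties demanded by Corollary~\ref{c:ffbaer}, then apply that corollary) is essentially the paper's own mechanism one level up: the paper's proof cites Lemma~\ref{l:BR} together with \cite[Corollary~4.6]{CO1}, and Corollary~\ref{c:ffbaer} is itself Lemma~\ref{l:BR} plus \cite[Theorem~4.1]{CO1}. Your part~(1) and the dual half of part~(2) are sound modulo the bookkeeping issue below. The genuine gap is the non-dual half of part~(2): the parenthetical ``(dual)'' there asserts the equivalence for \emph{both} ``strongly $M$-Baer'' and ``dual strongly $M$-Baer'', and you prove only the latter. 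Your method cannot reach the former: Corollary~\ref{c:ffbaer}(1) requires $i$ to preserve products, and, as you observe yourself, a coreflective inclusion need not preserve products (for the paper's key example ${}^C\mathcal{M}\subseteq {\rm Mod}(C^*)$, the product of comodules is the rational part of the module product, not the module product). That half rests on the ingredient your proposal never uses: for a coreflective abelian full subcategory the inclusion is \emph{exact} (this is exactly what the proof of Corollary~\ref{c:rc-reg}(1) invokes), so the strongly Rickart property itself---not just its dual---transfers across $i$ by \cite[Corollary~4.6]{CO1}; one must then still reconcile the product $N^I$ formed in $\mathcal{C}$ (a coreflection of the product in $\mathcal{A}$) with the product formed in $\mathcal{A}$ before Lemma~\ref{l:BR} can be applied on both sides. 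None of this appears in your proposal.

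Separately, your hypotheses do not match the printed ones: the corollary assumes existence of the coproducts $M^{(I)}$ in part~(1) and of the products $N^I$ in part~(2), whereas your argument consumes products in part~(1) and coproducts in part~(2). Mathematically your pairing is the one forced by Lemma~\ref{l:BR} (and it matches the pattern of Corollary~\ref{c:tripleffbaer}), so the printed hypotheses appear to be swapped; but a proof that claims Corollary~\ref{c:ffbaer}(1) ``applies directly'' while using an existence assumption the statement does not grant should at least flag the discrepancy. Finally, your claim that the inclusion of a \emph{(co)reflective} abelian full subcategory is exact is false in the reflective case---the inclusion of sheaves into presheaves is reflective, with both categories abelian, and is only left exact---and the paper asserts exactness only for coreflective inclusions; this slip happens to be harmless for your part~(1), which uses only left exactness and product preservation, but it should be removed.
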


\begin{proof} This follows by Lemma \ref{l:BR} and \cite[Corollary~4.6]{CO1}.
\end{proof}

For comodule categories we have the following corollary.

\begin{coll} \label{c:com3} Let $C$ be a coalgebra over a field, and let $M$ and $N$ be left $C$-comodules. 
Then $N$ is (dual) strongly $M$-Baer if and only if $N$ is (dual) strongly $M$-Baer as a right $C^*$-module.
\end{coll}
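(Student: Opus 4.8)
The plan is to exhibit the comodule category as a coreflective abelian full subcategory of a module category and then to quote Corollary~\ref{c:rcbaer}, exactly mirroring the argument already used for strong relative regularity in Corollary~\ref{c:com1-reg}. First I would recall the standard fact that, for a coalgebra $C$ over a field, the category ${}^C\mathcal{M}$ of left $C$-comodules is (isomorphic to) the full subcategory of rational right $C^*$-modules inside ${\rm Mod}(C^*)$, and that the rational functor is right adjoint to the inclusion $i:{}^C\mathcal{M}\to {\rm Mod}(C^*)$. Thus ${}^C\mathcal{M}$ is a coreflective abelian full subcategory of ${\rm Mod}(C^*)$, which is the setting of Corollary~\ref{c:rcbaer}(2).

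Next I would verify the only nontrivial hypothesis of Corollary~\ref{c:rcbaer}(2), namely that the product $N^I$ exists in ${}^C\mathcal{M}$ for every set $I$. This holds because ${}^C\mathcal{M}$ is a Grothendieck category and is therefore complete; in particular all set-indexed products exist internally to ${}^C\mathcal{M}$. With this in hand I would apply Corollary~\ref{c:rcbaer}(2) with $\mathcal{A}={\rm Mod}(C^*)$, $\mathcal{C}={}^C\mathcal{M}$ and the inclusion functor $i$, obtaining that $N$ is (dual) strongly $M$-Baer in ${}^C\mathcal{M}$ if and only if $i(N)=N$ is (dual) strongly $i(M)=M$-Baer in ${\rm Mod}(C^*)$, that is, as a right $C^*$-module. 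This is precisely the asserted equivalence, and since Corollary~\ref{c:rcbaer}(2) is stated for both the strongly and the dual strongly Baer property, both halves of the statement follow at once.

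The only delicate point---and the step I would flag as the potential obstacle---is that the product $N^I$ formed inside ${}^C\mathcal{M}$ does \emph{not} in general coincide with the product formed in ${\rm Mod}(C^*)$: the comodule product is the rational part of the module product, since the inclusion $i$ is a left adjoint and so need not preserve products. However, Corollary~\ref{c:rcbaer}(2) requires only that the products exist in the subcategory $\mathcal{C}$, and its proof (via Lemma~\ref{l:BR} and \cite[Corollary~4.6]{CO1}) already accommodates this discrepancy between ambient and internal products. Hence no additional computation is needed once completeness of ${}^C\mathcal{M}$ is invoked, and the corollary reduces to a direct citation.
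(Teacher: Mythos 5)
Your proposal is correct and takes essentially the same route as the paper: the paper's proof is precisely to note that ${}^C\mathcal{M}$ is a coreflective abelian (full) subcategory of ${\rm Mod}(C^*)$ and to invoke Corollary~\ref{c:rcbaer}. The additional details you supply---the rational-module description of the coreflection, completeness of the Grothendieck category ${}^C\mathcal{M}$ to secure the product hypothesis, and the remark that internal products are the rational parts of the module-theoretic ones---are accurate and simply make explicit what the paper leaves implicit.
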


\begin{proof} Note that ${}^C\mathcal{M}$ is a coreflective abelian subcategory of ${\rm Mod}(C^*)$
and use Corollary \ref{c:rcbaer}. 
\end{proof}

In order to discuss the transfer of the (dual) strong relative Baer property to endomorphism
rings, we give first some general results involving adjoint functors.

\begin{coll} \label{c:equivbaer} Let $(L,R)$ be an adjoint pair of covariant functors $L:\mathcal{A}\to \mathcal{B}$ and
$R:\mathcal{B}\to \mathcal{A}$ between abelian categories. 
\begin{enumerate}
\item Let $M$ and $N$ be objects of $\mathcal{B}$ such that $M,N\in {\rm Stat}(R)$ and for every set $I$ there exists
the product $N^I$. Then the following are equivalent:
\begin{enumerate}[(i)]
\item $N$ is strongly $M$-Baer in $\mathcal{B}$.
\item $R(N)$ is strongly $R(M)$-Baer in $\mathcal{A}$ and for every family $(f_i)_{i\in I}$ with each $f_i\in {\rm
Hom}_{\mathcal{A}}(M,N)$, $\bigcap_{i\in I}{\rm Ker}(f_i)$ is $M$-cyclic.
\item $R(N)$ is strongly $R(M)$-Baer in $\mathcal{A}$ and for every family $(f_i)_{i\in I}$ with each $f_i\in {\rm
Hom}_{\mathcal{A}}(M,N)$, $\bigcap_{i\in I}{\rm Ker}(f_i)\in {\rm Stat}(R)$.
\end{enumerate}
\item Let $M$ and $N$ be objects of $\mathcal{A}$ such that $M,N\in {\rm Adst}(R)$ and for every set $I$ there exists
the coproduct $M^{(I)}$. Then the following are equivalent:
\begin{enumerate}[(i)]
\item $N$ is dual strongly $M$-Baer in $\mathcal{A}$.
\item $L(N)$ is dual strongly $L(M)$-Baer in $\mathcal{B}$ and for every family $(f_i)_{i\in I}$ with each $f_i\in {\rm
Hom}_{\mathcal{A}}(M,N)$, $\sum_{i\in I} {\rm Im}(f_i)$ is $N$-cocyclic.
\item $L(N)$ is dual strongly $L(M)$-Baer in $\mathcal{B}$ and for every family $(f_i)_{i\in I}$ with each $f_i\in {\rm
Hom}_{\mathcal{A}}(M,N)$, $\sum_{i\in I} {\rm Im}(f_i)\in {\rm Adst}(R)$.
\end{enumerate}
\end{enumerate}
\end{coll}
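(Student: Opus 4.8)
The plan is to reduce the (dual) strongly relative Baer property to the (dual) strongly relative Rickart property via Lemma~\ref{l:BR}, and then feed this into the strongly relative Rickart transfer theorem \cite[Theorem~4.9]{CO1} (the Rickart statement underlying Theorem~\ref{t:equiv-reg}). The bridge is the observation that a family $(f_i)_{i\in I}$ of morphisms $f_i:M\to N$ in $\mathcal{B}$ is precisely the datum of a single morphism $f:M\to N^I$ (whose existence is assumed), and that under this identification $\bigcap_{i\in I}{\rm Ker}(f_i)={\rm Ker}(f)$. Thus the $M$-cyclic clause of (ii) reads ``${\rm Ker}(f)$ is $M$-cyclic for every $f:M\to N^I$ and every $I$'', and the ${\rm Stat}(R)$ clause of (iii) is the analogous statement for such kernels.

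For part (1) I would then run the following chain. By Lemma~\ref{l:BR}(1) applied in $\mathcal{B}$, condition (i) is equivalent to ``$N^I$ is strongly $M$-Rickart for every set $I$''. Since $R$ is a right adjoint it preserves products, so $R(N^I)\cong R(N)^I$; hence, by Lemma~\ref{l:BR}(1) applied now in $\mathcal{A}$, ``$R(N)$ is strongly $R(M)$-Baer'' is equivalent to ``$R(N^I)\cong R(N)^I$ is strongly $R(M)$-Rickart for every $I$''. Applying \cite[Theorem~4.9]{CO1} to the pair $(M,N^I)$, quantified over all $I$, gives that $N^I$ is strongly $M$-Rickart if and only if $R(N^I)$ is strongly $R(M)$-Rickart and ${\rm Ker}(f)$ is $M$-cyclic (resp.\ lies in ${\rm Stat}(R)$) for every $f:M\to N^I$; recombining via the two uses of Lemma~\ref{l:BR} yields (i)$\Leftrightarrow$(ii)$\Leftrightarrow$(iii). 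Part (2) is handled dually: a family $(f_i)$ is packaged as a single morphism $M^{(I)}\to N$, one uses that the left adjoint $L$ preserves coproducts so $L(M^{(I)})\cong L(M)^{(I)}$, and one invokes the cokernel half of the Rickart transfer together with Lemma~\ref{l:BR}(2).

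The main obstacle is that \cite[Theorem~4.9]{CO1} requires the second object to lie in ${\rm Stat}(R)$, whereas here that object is $N^I$, and $N\in{\rm Stat}(R)$ does not force $N^I\in{\rm Stat}(R)$: the counit factors as $L(R(N)^I)\to (LR(N))^I\xrightarrow{(\varepsilon_N)^I}N^I$, and the first comparison map is an isomorphism exactly when $L$ preserves the product $R(N)^I$, which a left adjoint need not do. I would resolve this not by forcing $N^I$ to be static, but by re-running the argument of \cite[Theorem~4.9]{CO1} directly at the level of families. The only ingredients genuinely needed are that $M,N\in{\rm Stat}(R)$, giving the bijection ${\rm Hom}_{\mathcal{B}}(M,N)\cong{\rm Hom}_{\mathcal{A}}(R(M),R(N))$ so that families in $\mathcal{B}$ and in $\mathcal{A}$ correspond, and that $R$, as a right adjoint, preserves the limit $\bigcap_{i\in I}{\rm Ker}(f_i)$, whence $R\big(\bigcap_i{\rm Ker}(f_i)\big)\cong\bigcap_i{\rm Ker}(R(f_i))$; the clause ``$\bigcap_i{\rm Ker}(f_i)\in{\rm Stat}(R)$'' (resp.\ its $M$-cyclicity) is then precisely what lets one descend the fully invariant direct summand conclusion from $R(M)$ back to $M$ through the isomorphism $\varepsilon_M$. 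I would also flag the evident typo in (ii) and (iii), where ${\rm Hom}_{\mathcal{A}}(M,N)$ should read ${\rm Hom}_{\mathcal{B}}(M,N)$ since $M,N$ are objects of $\mathcal{B}$ (dually on the $\mathcal{A}$ side in part (2)).
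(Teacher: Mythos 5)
Your proposal is correct, and at the level of ingredients it is the same proof as the paper's: the printed proof is precisely the one-line citation of \cite[Theorem~4.9]{CO1}, Lemma~\ref{l:BR}, \cite[Lemma~3.11]{CO} (which packages a family $(f_i)_{i\in I}$ as a single morphism $f:M\to N^I$ with $\mathrm{Ker}(f)=\bigcap_{i\in I}\mathrm{Ker}(f_i)$), and the facts that $R$ preserves products and $L$ preserves coproducts. What you add is the observation that applying \cite[Theorem~4.9]{CO1} as a black box to the pair $(M,N^I)$ is not licit, because that theorem requires its second object to be static, and $N\in{\rm Stat}(R)$ does not force $N^I\in{\rm Stat}(R)$ (the comparison map $L(R(N)^I)\to (LR(N))^I$ need not be an isomorphism, as $L$ need not preserve products). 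This is a real subtlety that the paper's terse proof does not acknowledge, and your repair is the right one --- indeed it is presumably what the listed citations are implicitly meant to accomplish: staticness of the codomain enters the Rickart transfer argument only through the Hom-set bijection, and for the pair $(M,N^I)$ the bijection $\mathrm{Hom}_{\mathcal{B}}(M,N^I)\cong \mathrm{Hom}_{\mathcal{B}}(M,N)^{I}\cong \mathrm{Hom}_{\mathcal{A}}(R(M),R(N))^{I}\cong \mathrm{Hom}_{\mathcal{A}}(R(M),R(N^I))$ already follows from $M,N\in{\rm Stat}(R)$ together with the product-preservation of $R$; the rest of the argument goes through verbatim at the level of families, since $\bigcap_{i\in I}\mathrm{Ker}(f_i)=\mathrm{Ker}(M\to N^I)$ is a limit preserved by $R$, and the $M$-cyclicity (resp.\ staticness) of $K=\bigcap_{i\in I}\mathrm{Ker}(f_i)$ makes $\varepsilon_K$ an epimorphism (resp.\ isomorphism), which is exactly what is needed to descend the fully invariant direct summand property from $R(K)\leq R(M)$ back to $K\leq M$ through the isomorphism $\varepsilon_M$. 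Two minor points: your typo correction of $\mathrm{Hom}_{\mathcal{A}}(M,N)$ to $\mathrm{Hom}_{\mathcal{B}}(M,N)$ applies only to part (1); in part (2) the objects $M,N$ lie in $\mathcal{A}$, so the subscript there is already correct. Also, your argument confirms that, in contrast with Theorem~\ref{t:equiv-reg}, no exactness hypothesis on $L$ is needed here: exactness entered Theorem~\ref{t:equiv-reg} only through the transfer of direct injectivity (Proposition~\ref{p:transferdip}), which plays no role in the Baer setting.
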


\begin{proof} This follows by \cite[Theorem~4.9]{CO1}, Lemma \ref{l:BR}, \cite[Lemma~3.11]{CO} and the facts that $R$
preserves products and $L$ preserves coproducts.
\end{proof}

\begin{coll} Let $(L,R)$ be a pair of contravariant functors $L:\mathcal{A}\to \mathcal{B}$ and
$R:\mathcal{B}\to \mathcal{A}$ between abelian categories. 
\begin{enumerate}
\item Assume that $(L,R)$ is left adjoint. Let $M$ and $N$ be objects of $\mathcal{B}$ such that $M,N\in {\rm Refl}(R)$
and for every set $I$ there exists the product $N^I$. Then the following are equivalent:
\begin{enumerate}[(i)]
\item $N$ is strongly $M$-Baer in $\mathcal{B}$.
\item $R(M)$ is dual strongly $R(N)$-Baer in $\mathcal{A}$ and for every set $I$ and for every family $(f_i)_{i\in
I}$ with each $f_i\in {\rm Hom}_{\mathcal{A}}(M,N)$, $\bigcap_{i\in I}{\rm Ker}(f_i)$ is $M$-cyclic.
\item $R(M)$ is dual strongly $R(N)$-Baer in $\mathcal{A}$ and for every set $I$ and for every family $(f_i)_{i\in
I}$ with each $f_i\in {\rm Hom}_{\mathcal{A}}(M,N)$, $\bigcap_{i\in I}{\rm Ker}(f_i)\in {\rm Refl}(R)$.
\end{enumerate}
\item Assume that $(L,R)$ is right adjoint. Let $M$ and $N$ be objects of $\mathcal{A}$ such that $M,N\in {\rm Refl}(L)$
and for every set $I$ there exists the coproduct $M^{(I)}$. Then the following are equivalent:
\begin{enumerate}[(i)]
\item $N$ is dual strongly $M$-Baer in $\mathcal{A}$.
\item $L(M)$ is strongly $L(N)$-Baer in $\mathcal{B}$ and for every set $I$ and for every family $(f_i)_{i\in I}$ with each
$f_i\in {\rm Hom}_{\mathcal{A}}(M,N)$, $\sum_{i\in I} {\rm Im}(f_i)$ is $N$-cocyclic.
\item $L(M)$ is strongly $L(N)$-Baer in $\mathcal{B}$ and for every set $I$ and for every family $(f_i)_{i\in I}$ with each
$f_i\in {\rm Hom}_{\mathcal{A}}(M,N)$, $\sum_{i\in I} {\rm Im}(f_i)\in {\rm Refl}(L)$.
\end{enumerate}
\end{enumerate}
\end{coll}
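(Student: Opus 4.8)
The statement is the contravariant analogue of Corollary~\ref{c:equivbaer}, and I would deduce it from the contravariant transfer theorem for strongly Rickart objects, \cite[Theorem~4.10]{CO1}, in the same way that Corollary~\ref{c:equivbaer} was deduced from its covariant counterpart \cite[Theorem~4.9]{CO1} (the left-adjoint/exactness setup being the same as in Theorem~\ref{t:dual-reg}). As the two parts are dual, I would write out only part~(1) and obtain part~(2) by the duality principle.

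First I would pass from Baer to Rickart. By Lemma~\ref{l:BR}(1), $N$ is strongly $M$-Baer in $\mathcal{B}$ if and only if $N^I$ is strongly $M$-Rickart for every set $I$. Under the bijection of \cite[Lemma~3.11]{CO}, a family $(f_i)_{i\in I}$ with each $f_i\in\Hom_{\mathcal{A}}(M,N)$ corresponds to a single morphism $f:M\to N^I$ satisfying ${\rm Ker}(f)=\bigcap_{i\in I}{\rm Ker}(f_i)$; this is exactly the identification that turns the conditions on $\bigcap_{i\in I}{\rm Ker}(f_i)$ in (ii) and (iii) into the conditions on ${\rm Ker}(f)$ for morphisms $M\to N^I$ that occur in \cite[Theorem~4.10]{CO1}.

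Next I would transport along the contravariant functor $R$. Applying \cite[Theorem~4.10]{CO1} to $M$ and $N^I$ shows that $N^I$ is strongly $M$-Rickart if and only if $R(M)$ is dual strongly $R(N^I)$-Rickart together with the stated kernel condition. The crucial categorical input is that, as a contravariant left adjoint, $R$ sends products to coproducts, so $R(N^I)\cong R(N)^{(I)}$; therefore $R(M)$ is dual strongly $R(N^I)$-Rickart if and only if it is dual strongly $R(N)^{(I)}$-Rickart. Running $I$ over all sets and applying Lemma~\ref{l:BR}(2) collapses the family of Rickart conditions into the single assertion that $R(M)$ is dual strongly $R(N)$-Baer, which gives (i)$\Leftrightarrow$(ii)$\Leftrightarrow$(iii). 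Part~(2) proceeds identically, now using that the contravariant right adjoint $L$ sends coproducts to products, so that $L(M^{(I)})\cong L(M)^I$, together with the dual halves of Lemma~\ref{l:BR} and \cite[Theorem~4.10]{CO1}.

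The difficulty is not conceptual but lies in the compatibility bookkeeping. I would need to verify that the reflexivity hypothesis $M,N\in{\rm Refl}(R)$ propagates to the powers $N^I$ (so that \cite[Theorem~4.10]{CO1} genuinely applies to $M,N^I$) and that in condition~(iii) the objects $\bigcap_{i\in I}{\rm Ker}(f_i)$ land in ${\rm Refl}(R)$ as required; and I would need to check that the natural isomorphism $R(N^I)\cong R(N)^{(I)}$ is compatible with the structure morphisms, so that the $M$-cyclicity and reflexivity conditions really do transport unchanged across it. These are the steps where the contravariant adjunction must be used most carefully.
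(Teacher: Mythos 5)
Your proposal matches the paper's own proof: the paper likewise deduces the corollary from \cite[Theorem~4.10]{CO1} together with Lemma~\ref{l:BR}, \cite[Lemma~3.11]{CO}, and the fact that a contravariant adjoint converts (co)products into the dual notion (the paper writes out part~(2), using that $L$ turns coproducts into products, exactly dual to your treatment of part~(1) via $R(N^I)\cong R(N)^{(I)}$). The bookkeeping issues you flag (reflexivity of $N^I$ and compatibility of the natural isomorphism) are indeed the implicit content of that citation chain, and the paper handles them no more explicitly than you do.
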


\begin{proof} (2) This follows by \cite[Theorem~4.10]{CO1}, Lemma \ref{l:BR}, \cite[Lemma~3.11]{CO} and the fact $L$ converts
coproducts into products.
\end{proof}

Finally, we discuss the transfer of the (dual) strong relative Baer property to endomorphism rings of (graded) modules. 

\begin{coll} Let $M$ be a right $R$-module, and let $S={\rm End}_R(M)$. 
\begin{enumerate}
\item The following are equivalent:
\begin{enumerate}[(i)] 
\item $M$ is a strongly self-Baer right $R$-module. 
\item $S$ is a strongly self-Baer right $S$-module and for every set $I$ and for every family $(f_i)_{i\in I}$ with each $f_i\in
S$, $\bigcap_{i\in I}{\rm Ker}(f_i)$ is $M$-cyclic.
\item $S$ is a strongly self-Baer right $S$-module and for every set $I$ and for every family $(f_i)_{i\in I}$ with each $f_i\in
S$, $\bigcap_{i\in I}{\rm Ker}(f_i)\in {\rm Stat}({\rm Hom}_R(M,-))$.
\item $S$ is a strongly self-Baer right $S$-module and for every set $I$ and for every family $(f_i)_{i\in I}$ with each $f_i\in
S$, $\bigcap_{i\in I}{\rm ker}(f_i)$ is a locally split monomorphism.
\item $S$ is a strongly self-Baer right $S$-module and $M$ is quasi-retractable.
\end{enumerate}
\item The following are equivalent:
\begin{enumerate}[(i)] 
\item $M$ is a dual strongly self-Baer right $R$-module.
\item $S$ is a strongly self-Baer left $S$-module and for every set $I$ and for every family $(f_i)_{i\in I}$ with each
$f_i\in S$, $\sum_{i\in I} {\rm Im}(f_i)$ is $M$-cocyclic.
\item $S$ is a strongly self-Baer left $S$-module and for every set $I$ and for every family $(f_i)_{i\in I}$ with each
$f_i\in S$, $\sum_{i\in I} {\rm Im}(f_i)\in {\rm Adst}({\rm Hom}_R(M,-))$.
\item $S$ is a strongly self-Baer left $S$-module and for every set $I$ and for every family $(f_i)_{i\in I}$ with each
$f_i\in S$, $\sum_{i\in I} {\rm im}(f_i)$ is a locally split epimorphism.
\item $S$ is a strongly self-Baer left $S$-module and $M$ is quasi-coretractable.
\end{enumerate}
\end{enumerate}
\end{coll}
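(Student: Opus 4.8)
The plan is to realise the equivalence of (i), (ii) and (iii) as a direct instance of Corollary~\ref{c:equivbaer}, and then to read off (iv) and (v) from an explicit description of the relevant kernel intersections. First I would take the adjoint pair $(L,R)$ of covariant functors $L=-\otimes_S M:{\rm Mod}(S)\to {\rm Mod}(R)$ and $R={\rm Hom}_R(M,-):{\rm Mod}(R)\to {\rm Mod}(S)$. The counit $\varepsilon_M$ is the evaluation $S\otimes_S M\to M$, an isomorphism, so $M\in {\rm Stat}(R)$; also products exist in ${\rm Mod}(R)$. Since $R(M)={\rm Hom}_R(M,M)=S$ as a right $S$-module and ${\rm Hom}_{{\rm Mod}(R)}(M,M)=S$, applying Corollary~\ref{c:equivbaer}(1) with $N=M$ yields the equivalence of (i), (ii) and (iii) at once. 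Note that this needs $L=-\otimes_S M$ only to be a left adjoint, not exact (which it generally is not), and Corollary~\ref{c:equivbaer} is indeed stated without exactness of $L$.

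It then remains to compare the second clauses of (ii)--(v) under the common hypothesis that $S$ is a strongly self-Baer right $S$-module. Fix a family $(f_i)_{i\in I}$ with each $f_i\in S$ and set $K=\bigcap_{i\in I}{\rm Ker}(f_i)$. The key observation is that ${\rm Hom}_R(M,K)=\{g\in S\mid f_ig=0\text{ for all }i\}=\bigcap_{i\in I}{\rm Ker}(\lambda_{f_i})$, where $\lambda_{f_i}\in {\rm End}(S_S)\cong S$ is left multiplication by $f_i$. Because $S$ is strongly self-Baer, this intersection is a fully invariant direct summand of $S_S$, hence $={\rm}eS$ for a central idempotent $e\in S$. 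Since $e\in eS={\rm Hom}_R(M,K)$ forces $eM\subseteq K$, the counit $\varepsilon_K:{\rm Hom}_R(M,K)\otimes_S M\to K$ factors as the canonical isomorphism $eS\otimes_S M\cong eM$ followed by the inclusion $eM\hookrightarrow K$.

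From this single description all four auxiliary conditions collapse to $K=eM$. Indeed $\varepsilon_K$ is an isomorphism iff $eM=K$, so (iii) holds iff $K=eM$, which is exactly the $M$-cyclicity of $K$ witnessed by $e$, giving (ii)$\Leftrightarrow$(iii). If $K=eM$ then $e:M\to K$ retracts the inclusion, so $K\hookrightarrow M$ is split, hence locally split; conversely local splitting provides for each $k\in K$ some $\theta\in {\rm Hom}_R(M,K)=eS$ with $\theta(k)=k$, forcing $k\in eM$ and thus $K=eM$, giving (iii)$\Leftrightarrow$(iv). Finally, writing $M=eM\oplus(1-e)M$, every map $M\to K\cap(1-e)M$ lies in $eS$ yet has image in $(1-e)M$, hence is zero; so quasi-retractability forces $K\cap(1-e)M=0$ and then $K=eM$ by modularity, while conversely $K=eM$ with ${\rm Hom}_R(M,K)=eS$ makes ${\rm Hom}_R(M,K)=0$ impossible whenever $K\neq 0$. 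This gives (ii)$\Leftrightarrow$(v) and completes (1).

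The main obstacle I anticipate is not the homological machinery, which is packaged in Corollary~\ref{c:equivbaer}, but fixing the precise definitions of \emph{locally split monomorphism} and \emph{quasi-retractable} so that the central-idempotent computation of the preceding paragraph applies uniformly; once the reduction ${\rm Hom}_R(M,K)=eS$ and $eM\subseteq K$ is in place, each comparison is a short verification. Part (2) is not literally dual, but follows in the same manner from Corollary~\ref{c:equivbaer}(2) applied to the left-module structure of $S$, replacing kernels, cyclic, static and quasi-retractable by images, cocyclic, adstatic and quasi-coretractable, and locally split monomorphisms by locally split epimorphisms.
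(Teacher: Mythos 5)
Your part (1) is essentially correct. The equivalence (i)$\Leftrightarrow$(ii)$\Leftrightarrow$(iii) via Corollary~\ref{c:equivbaer} applied to the adjoint pair $(-\otimes_S M,{\rm Hom}_R(M,-))$ with $N=M$ is in substance the paper's own route: the paper invokes \cite[Theorem~4.12]{CO1} together with Lemma~\ref{l:BR} and \cite[Lemma~3.11]{CO}, which is exactly the machinery packaged inside Corollary~\ref{c:equivbaer}, and your remark that no exactness of $-\otimes_S M$ is required is the right observation. Where you genuinely depart from the paper is in (iv) and (v): the paper merely says these follow by adapting the proof of \cite[Theorem~4.12]{CO1}, whereas you give a self-contained reduction of all four auxiliary conditions to $K=eM$, where ${\rm Hom}_R(M,K)=\bigcap_i{\rm Ker}(\lambda_{f_i})=eS$. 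This argument works and is a nice elementary substitute, but it needs two small repairs. First, a fully invariant direct summand $eS$ of $S_S$ only forces $e$ to be left semicentral ($se=ese$ for all $s$), not central; to get centrality you should cite Proposition~\ref{p:Bstrring} (strong self-Baerness of $S_S$ makes $S$ abelian, so all idempotents are central) --- or better, note that none of your subsequent steps actually uses centrality, only $e^2=e$ and ${\rm Hom}_R(M,K)=eS$. Second, quasi-retractability only tests submodules of the form $\bigcap_{f\in J}{\rm Ker}(f)$, so before applying it to $K\cap(1-e)M$ you must record that $K\cap(1-e)M=\bigcap_i{\rm Ker}(f_i)\cap{\rm Ker}(e)$ is again an intersection of kernels of endomorphisms.

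Part (2), however, contains a genuine gap: Corollary~\ref{c:equivbaer}(2) applied to the pair $(-\otimes_S M,{\rm Hom}_R(M,-))$ cannot yield the statement. In that corollary the objects live in the domain ${\rm Mod}(S)$ of $L$, the transferred property remains \emph{dual} strong Baerness, and all auxiliary data are right $S$-modules; a covariant pair can produce neither the left $S$-module structure on $S={\rm Hom}_R(M,M)$ nor the switch from ``dual strongly self-Baer'' for $M$ to ``strongly self-Baer'' for ${}_SS$ demanded by (2)(ii). That switch is precisely an effect of contravariance: the left $S$-module structure comes from ${\rm Hom}_R(-,M)$, which converts sums of images inside $M$ into intersections of kernels inside ${}_SS$. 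The correct tool is the paper's corollary on pairs of contravariant functors stated immediately before the present one, part (2), applied to the right adjoint pair $L={\rm Hom}_R(-,M)$ from ${\rm Mod}(R)$ to left $S$-modules and ${\rm Hom}_S(-,M)$ back, noting $M\in{\rm Refl}(L)$ since $\Hom_S(S,M)\cong M$; alternatively one adapts \cite[Theorem~4.12]{CO1}(2) directly, as the paper does. Your idempotent computation for (iv) and (v) would then also have to be genuinely dualized (fully invariant direct summands $Se$ of ${}_SS$, sums of images, locally split epimorphisms, quasi-coretractability) rather than merely relabelled.
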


\begin{proof} The equivalences (i)$\Leftrightarrow$(ii)$\Leftrightarrow$(iii) follow by \cite[Theorem~4.12]{CO1}, 
Lemma \ref{l:BR} and \cite[Lemma~3.11]{CO}. 
The other equivalences follow in a similar way as the corresponding ones from \cite[Theorem~4.12]{CO1} 
with endomorphisms replaced by families of endomorphisms, kernels replaced by intersections of kernels, and
cokernels replaced by sums of images. 
\end{proof}

\begin{coll} Let $M$ be a graded right $R$-module, and let $S={\rm END}_R(M)$. 
\begin{enumerate}
\item The following are equivalent:
\begin{enumerate}[(i)] 
\item $M$ is a strongly self-Baer graded right $R$-module. 
\item $S$ is a strongly self-Baer graded right $S$-module and for every set $I$ and for every family $(f_i)_{i\in I}$ with each
$f_i\in S$, $\bigcap_{i\in I}{\rm Ker}(f_i)$ is $M$-cyclic.
\item $S$ is a strongly self-Baer graded right $S$-module and for every set $I$ and for every family $(f_i)_{i\in I}$ with each
$f_i\in S$, $\bigcap_{i\in I}{\rm Ker}(f_i)\in {\rm Stat}({\rm HOM}_R(M,-))$.
\end{enumerate}
\item The following are equivalent:
\begin{enumerate}[(i)] 
\item $M$ is a dual strongly self-Baer graded right $R$-module.
\item $S$ is a strongly self-Baer graded left $S$-module and for every set $I$ and for every family $(f_i)_{i\in I}$ with each
$f_i\in S$, $\sum_{i\in I} {\rm Im}(f_i)$ is $M$-cocyclic.
\item $S$ is a strongly self-Baer graded left $S$-module and for every set $I$ and for every family $(f_i)_{i\in I}$ with each
$f_i\in S$, $\sum_{i\in I} {\rm Im}(f_i)\in {\rm Refl}({\rm HOM}_R(-,M))$.
\end{enumerate}
\end{enumerate}
\end{coll}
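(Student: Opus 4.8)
The plan is to reduce both parts to the graded endomorphism-ring transfer theorem for (dual) strongly relative Rickart objects, namely \cite[Corollary~4.13]{CO1} (the graded analogue of \cite[Theorem~4.12]{CO1}), following verbatim the pattern of the preceding corollary for ungraded modules. The device that converts the Baer side into the Rickart side is Lemma~\ref{l:BR}: for part~(1), $M$ is strongly self-Baer as a graded right $R$-module if and only if the graded product $M^I$ is strongly $M$-Rickart for every set $I$, and likewise $S$ is strongly self-Baer as a graded right $S$-module if and only if $S^I$ is strongly $S$-Rickart for every set $I$. The functor relating the two sides is the graded Hom $\HOM{R}{M}{-}$, for which $M\in {\rm Stat}(\HOM{R}{M}{-})$ and $\HOM{R}{M}{M}=S$.

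For part~(1) I would first record the bookkeeping identification: a graded morphism $f\colon M\to M^I$ is exactly a family $(f_i)_{i\in I}$ with each $f_i\in S$, and under this identification ${\rm Ker}(f)=\bigcap_{i\in I}{\rm Ker}(f_i)$. Applying $\HOM{R}{M}{-}$, which preserves products by \cite[Lemma~3.11]{CO}, sends $M^I$ to $S^I$ and $M$ to $S$, so \cite[Corollary~4.13]{CO1} carries the assertion ``$M^I$ is strongly $M$-Rickart'' to ``$S^I$ is strongly $S$-Rickart, together with the side condition that ${\rm Ker}(f)$ is $M$-cyclic (respectively lies in ${\rm Stat}(\HOM{R}{M}{-})$)''. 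Substituting the family description of $f$ into the side condition produces precisely the conditions on $\bigcap_{i\in I}{\rm Ker}(f_i)$ appearing in (ii) and (iii), and passing back through Lemma~\ref{l:BR} on the $S$-side gives (i)$\Leftrightarrow$(ii)$\Leftrightarrow$(iii).

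For part~(2) I would dualize, replacing kernels and intersections by images and sums. Lemma~\ref{l:BR}(2) reduces dual strong self-Baerness of $M$ to $M$ being dual strongly $M^{(I)}$-Rickart for every set $I$, and a graded morphism out of the coproduct $M^{(I)}$ is again a family $(f_i)_{i\in I}$ in $S$, now with $\sum_{i\in I}{\rm Im}(f_i)$ playing the former role of the intersection of kernels. Here the relevant functor is the contravariant graded dual $\HOM{R}{-}{M}$, in the spirit of the contravariant transfer results of Section~4 (cf.\ Theorem~\ref{t:dual-reg}); this is what produces the strongly self-Baer condition on $S$ as a \emph{left} $S$-module (the change of side being built into the dual transfer) together with the requirement that $\sum_{i\in I}{\rm Im}(f_i)$ be $M$-cocyclic, respectively lie in ${\rm Refl}(\HOM{R}{-}{M})$, which are exactly (ii) and (iii).

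I expect the only real obstacle to be bookkeeping around the graded product rather than any essential difficulty. In ${\rm gr}(R)$ the categorical product $M^I$ is the graded part of the ordinary product, so one must check that the isomorphism $\HOM{R}{M}{M^I}\cong S^I$ of graded $S$-modules is the canonical one supplied by \cite[Lemma~3.11]{CO} and that it is compatible with the unit and counit of the adjunction feeding \cite[Corollary~4.13]{CO1}; the corresponding compatibility for the contravariant dual is what is needed in part~(2). Once these identifications are in place, every remaining step is the literal graded translation of the ungraded corollary, with endomorphisms replaced by families of endomorphisms, kernels by intersections of kernels, and cokernels by sums of images.
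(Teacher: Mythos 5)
Your proposal is correct and follows essentially the same route as the paper, whose proof is precisely the combination of \cite[Corollary~4.13]{CO1}, Lemma~\ref{l:BR} and \cite[Lemma~3.11]{CO} that you describe: Lemma~\ref{l:BR} converts the strong Baer conditions into strong Rickart conditions on (co)products, \cite[Lemma~3.11]{CO} supplies the identification of families $(f_i)_{i\in I}$ with morphisms into $M^I$ (resp.\ out of $M^{(I)}$) and the compatibility of $\mathrm{HOM}_R(M,-)$ with products, and \cite[Corollary~4.13]{CO1} performs the graded endomorphism-ring transfer, with the contravariant dual $\mathrm{HOM}_R(-,M)$ accounting for the left $S$-module and $\mathrm{Refl}$ conditions in part~(2). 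Your expanded bookkeeping is exactly what the paper's citation-style proof leaves implicit.
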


\begin{proof} This follows by \cite[Corollary~4.13]{CO1}, Lemma \ref{l:BR} and \cite[Lemma~3.11]{CO}.
\end{proof}

\end{document}